\title{A new class of two-layer Green-Naghdi systems with improved frequency dispersion}
\author{Vincent Duch\^ene%
\thanks{IRMAR - UMR6625, CNRS and Univ. Rennes 1, Campus de Beaulieu, F-35042 Rennes cedex, France. ({\tt vincent.duchene@univ-rennes1.fr}). VD is partially supported by the project Dyficolti ANR-13-BS01-0003-01 of the Agence Nationale de la Recherche.}
\and Samer Israwi%
\footnotemark[2]
\and Raafat Talhouk%
\thanks{Math\'ematiques, Facult\'e des sciences I et Ecole doctorale des sciences et technologie, Universit\'e Libanaise, Beyrouth, Liban. ({\tt rtalhouk@ul.edu.lb}, {\tt s\_israwi83@hotmail.com}). SI and RT are partially supported by the Lebanese  University research program (MAA group project).}
}
\date{\today}
\let\Title\@title
\let\Author\@author
\numberwithin{equation}{section}
\newcommand{\RR}{\mathbb{R}}
\newcommand{\NN}{\mathbb{N}}
\renewcommand{\t}{\tilde}
\newcommand{\e}{\textbf{e}}
\newcommand{\m}{\mathfrak{m}}
\newcommand{\N}{\mathcal{N}}
\newcommand{\A}{\mathcal{A}}
\newcommand{\Q}{\mathcal{Q}}
\newcommand{\R}{\mathcal{R}}
\renewcommand{\O}{\mathcal{O}}
\renewcommand{\S}{\mathcal{S}}
\newcommand{\dd}{{\rm d}}
\newcommand{\F}{{\sf F}}
\newcommand{\eqdef}{\stackrel{\rm def}{=}}
\DeclareMathOperator{\Bo}{Bo}
\DeclareMathOperator*{\esssup}{ess\,sup}
\newcommand{\nn}{\nonumber}
\newcommand{\ie}{{\em i.e.}~}
\newcommand{\eg}{{\em e.g.}~}
\newcommand{\id}[1]{\left\vert_{_{#1}}\right.}
\newcommand{\dsp}{\displaystyle}
\newcommand{\W}{W_{\Bo^{-1}}}
\newcommand{\X}{X_{\Bo^{-1}}}
\newcommand{\Y}{Y_{\F^\mu}}
\newcommand{\Z}{Z_{\F^\mu}}
\DeclarePairedDelimiter\norm{\big\lvert}{\big\rvert}
\DeclarePairedDelimiter\Norm{\big\lVert}{\big\rVert}
\newtheorem{Theorem}{Theorem}[section]
\newtheorem{Definition}[Theorem]{Definition}
\newtheorem{Proposition}[Theorem]{Proposition}
\newtheorem{Lemma}[Theorem]{Lemma}
\newtheorem{Remark}[Theorem]{Remark}
\begin{document}
\maketitle

\begin{abstract}
We introduce a new class of Green-Naghdi type models for the propagation of internal waves between two ($1+1$)-dimensional layers of homogeneous, immiscible, ideal, incompressible, irrotational fluids, vertically delimited by a flat bottom and a rigid lid. These models are tailored to improve the frequency dispersion of the original bi-layer Green-Naghdi model, and in particular to manage high-frequency Kelvin-Helmholtz instabilities, while maintaining its precision in the sense of consistency. Our models preserve the Hamiltonian structure, symmetry groups and conserved quantities of the original model. We provide a rigorous justification of a class of our models thanks to consistency, well-posedness and stability results. These results apply in particular to the original Green-Naghdi model as well as to the Saint-Venant (hydrostatic shallow-water) system with surface tension.
\end{abstract}

\section{Introduction}
\subsection{Motivation}
This work is dedicated to the study of a bi-fluidic system which consists in two layers of homogeneous, immiscible, ideal and incompressible fluids under only the external influence of gravity. Such a configuration is commonly used in oceanography, where variations of temperature and salinity induce a density stratification; see~\cite{HelfrichMelville06} and references therein.

A striking property of the above setting, in contrast with the water-wave case (namely only one layer of homogeneous fluid with a free surface) is that the Cauchy problem for the governing equations is ill-posed outside of the analytic framework when surface tension is neglected~\cite{Ebin88,IguchiTanakaTani97,KamotskiLebeau05}. This ill-posedness is caused by the formation of high-frequency (\ie small wavelength) Kelvin-Helmholtz instabilities which are triggered by any non-trivial velocity shear. Recently, Lannes~\cite{Lannes13} showed that a small amount of surface tension is sufficient to durably regularize the high-frequency component of the flow, while the main low-frequency component remains mostly unaffected.

This result explains why, occasionally, surface tension may be harmlessly neglected in asymptotic models, that is simplified models constructed from smallness assumptions on physical properties of the flow. This is typically expected to be the case for models derived in the so-called shallow-water regime, which implies that the main component of the flow is located at low frequencies; and in particular for the two-layer extension of the classical Green-Naghdi model introduced by Miyata~\cite{Miyata85,Miyata87}, Mal'tseva~\cite{Malcprimetseva89} and Choi and Camassa~\cite{ChoiCamassa99}. 
However, as noticed in~\cite{JoChoi02} a linear stability analysis indicates that the bi-fluidic Green-Naghdi system actually overestimates Kelvin-Helmholtz instabilities, in the sense that the threshold on the velocity shear above which high-frequency instabilities are triggered is always smaller for the model than for the full Euler system.

Many attempts have been made in order to ``regularize'' the Green-Naghdi model, that is proposing new models with formally the same precision as the original model, but which are not subject to high-frequency Kelvin-Helmholtz instabilities, even without surface tension~\cite{NguyenDias08,ChoiBarrosJo09,BoonkasameMilewski14,DucheneIsrawiTalhouk,LannesMing}. The strategies adopted in these works rely on change of unknowns and/or Benjamin-Bona-Mahony type tricks; see~\cite[Section~5.2]{Lannes} for a thorough presentation of such methods in the water-wave setting. In this work, we present a new class of modified Green-Naghdi systems obtained through a different, somewhat simpler mean. We find numerous advantages in our method:
\begin{itemize}
\item The original Green-Naghdi model is only lightly modified, and the physical interpretations of variables and identities of the original model are preserved.
\item The method is quite flexible. It allows in particular to construct models which completely suppress large-frequency Kelvin-Helmholtz instabilities; or a model which conforms perfectly with the linear stability analysis of the full Euler system.
\item The rich structure of the original Green-Naghdi system (Hamiltonian formulation, groups of symmetry, conserved quantities) is maintained. This is generally not the case when change of unknowns or BBM tricks are involved; see discussion in~\cite{Christov01,DuranDutykhMitsotakis13}.
\end{itemize}
Our models may be viewed as Green-Naghdi systems with improved frequency dispersion. In particular, one of our models shares with full dispersion models such as in~\cite{Whitham,BonaLannesSaut08,SautXu12a,Obrecht14} the property that their dispersion relation is the same as the one of the full Euler system. Notice however that, consistently with the derivation of the original Green-Naghdi model and contrarily to the aforementioned ones, our models do not rely on an assumption of small amplitude: we only assume that the wavelength is large compared with the depth of the two layers (see the consistency result given in Proposition~\ref{P.consistency}). To the best of our knowledge, there does not exist any other model in the literature with such properties, even when only one layer of fluid is involved. A noteworthy feature of our models is that, by construction, they involve non-local operators (Fourier multipliers). Such operators are common in deep-water models or small-amplitude models, such as the Benjamin-Ono or Whitham equations for instance, but appear to be original in the shallow-water setting. 

Our new class of models is obtained by regularizing the original Green-Naghdi one and not from a direct derivation from the full Euler system. As it is made clear in Section~\ref{S.Hamiltonian}, below, we could also derive our models from Hamilton's principle, but once again our approximate Hamiltonian functional consists only in a harmless perturbation of the original Green-Naghdi one. It would be interesting to derive our models directly from an expansion of the so-called Dirichlet-Neumann operators, and investigate whether the full dispersion model is more precise than the original one when only small nonlinearities are involved.

We also acknowledge that the use of surface tension in view of modeling miscible fluids of different densities, such as water and brine, is arguable. We view surface tension as a simple and effective way of regularizing the flow. Let us point out that the construction of our new models does not rely on any particular structure of the surface tension component, thus could be applied with any additive regularizing term. Our systems may be more evidently applicable to two-layer systems of genuinely immiscible, homogeneous and inviscid fluids. We believe they may also be of interest in the situation of one layer of homogeneous fluid, with or without surface tension.

For the sake of simplicity, our study is restricted to the setting of a flat bottom, rigid lid and one-dimensional horizontal variable. The construction of our models, however, may straightforwardly be extended to the two-dimensional case. We also expect that our strategy can be favorably applied to more general configurations (non-trivial topography, free surface, multi-layer, {\em etc.})
\medskip

In the present work, we motivate our models through the study of Kelvin-Helmholtz instabilities, by linearizing the systems around solutions with constant shear. This formal study is supported by numerical simulations, which demonstrate how the predictions of the modified Green-Naghdi models may vary dramatically depending on their large-frequency dispersion properties, and the significant influence of small surface tension. We also provide a rigorous analysis for a class of our models by proving the well-posedness of the Cauchy problem as well as consistency and stability results, which together offer the full justification of our asymptotic models, in the sense described in~\cite{Lannes}. This includes the original Green-Naghdi model as well as the Saint-Venant (hydrostatic shallow-water) system with surface tension, for one or two layers of fluid; all these results being original as far as we know.

\subsection{The full Euler system}\label{S.fE}
 For the sake of completeness and in order to fix the notations, we briefly recall the governing equations of a two-layer flow in our configuration, that we call {\em full Euler system}. We let the interested reader refer to~\cite{BonaLannesSaut08,Anh09,Duchene14} for more details.

The setting consists in two layers (infinite in the horizontal variable, vertically delimited by a flat rigid lid and a flat bottom) of immiscible, homogeneous, ideal, incompressible and irrotational fluid under only the external influence of gravity. We assume that the interface between the two layers is given as the graph of a function, $\zeta(t,x)$, so that the domain of the two fluids at time $t$ is
\[ \Omega_1^t\eqdef \{(x,y),\ \zeta(t,x)\leq z\leq d_1\},\quad \Omega_2^t\eqdef \{(x,z),\ -d_2\leq z\leq \zeta(t,x)\} .\]
Here and thereafter, the subscript $i=1$ (resp. $i=2$) always refer to the upper (resp. lower) layer. 
The fluids being irrotational, we consider the velocity potentials in each layer, that we denote $\phi_i$. Finally, $P_i$ denotes the pressure inside each layer.

Let $a$ be the maximum amplitude of the deformation of the interface. We denote by $\lambda$ a characteristic horizontal length, say the wavelength of the interface. The celerity of infinitely long and small internal waves is given by
\[c_0 \ = \ \sqrt{g\frac{(\rho_2-\rho_1) d_1 d_2}{\rho_2 d_1+\rho_1 d_2}},\]
where $d_1$ (resp. $d_2$) is the depth of the upper (resp. lower) layer and $\rho_1$ (resp. $\rho_2$) its mass density. $g$ denotes the acceleration of gravity.
Consequently, we introduce the dimensionless variables
\[\begin{array}{cccc}
\t z \ \eqdef\ \dfrac{z}{d_1}, \quad\quad & \t x\ \eqdef \ \dfrac{x}{\lambda}, \quad\quad & \t t\ \eqdef\ \dfrac{c_0}{\lambda}t,
\end{array}\]
the dimensionless unknowns
\[\begin{array}{c}
\t{\zeta}(\t t,\t x)\eqdef \dfrac{\zeta(t,x)}{a}, \quad \t{\phi_i}(\t t,\t x,\t z) \eqdef \dfrac{d_1}{a\lambda c_0}\phi_i(t,x,z), \quad \t{P_i}(\t t,\t x,\t z) \eqdef \dfrac{d_1}{a\rho_2 c_0^2}P_i(t,x,z) \quad (i=1,2),
\end{array}\]
as well as the following dimensionless parameters
\[
\gamma \eqdef \dfrac{\rho_1}{\rho_2}, \quad \epsilon \eqdef \dfrac{a}{d_1}, \quad \mu \eqdef \dfrac{d_1^2}{\lambda^2}, \quad \delta \eqdef \dfrac{d_1}{d_2}, \quad \Bo \eqdef \dfrac{g(\rho_2-\rho_1)\lambda^2}{\sigma}.
\]
The last parameter is the Bond number, and measures the ratio of gravity forces over capillary forces ($\sigma$ is the surface tension coefficient). After applying the above scaling, but withdrawing the tildes for the sake of readability, the system may be written as
\begin{equation}\label{fE}
\left\{ \begin{array}{lr}
\mu \partial_x^2 \phi_i \ + \ \partial_z^2 \phi_i \ = \ 0 & \quad \text{ in }\Omega^t_i \quad (i=1,2),\\
\partial_{z}\phi_1 \ = \ 0 & \mbox{on } \{(x,z),z=1\}, \\
\partial_{z}\phi_2 \ = \ 0 & \mbox{on } \{(x,z),z=-\delta^{-1}\}, \\
\partial_t \zeta = \frac1\mu \sqrt{1+\mu\epsilon^2|\partial_x\zeta|^2}\partial_{n}\phi_1 = \frac1\mu \sqrt{1+\mu\epsilon^2|\partial_x\zeta|^2}\partial_{n}\phi_2 \quad & \mbox{on } \{(x,z),z=\epsilon\zeta(t,x)\},\\
\partial_t \phi_i+\frac{\epsilon}{2} |\nabla_{x,z}^\mu\phi_i|^2=-\frac{P}{\gamma^{2-i}}-\frac{\gamma+\delta}{1-\gamma}z &\quad \text{ in }\Omega^t_i \quad (i=1,2),\\
\llbracket P(t,x) \rrbracket \ = \ -\frac{\gamma+\delta}{\Bo} \frac{k(\epsilon\sqrt\mu\partial_x\zeta)}{{\epsilon\sqrt\mu}}& \mbox{on } \{(x,z),z=\epsilon\zeta(t,x)\},
\end{array}\right.
\end{equation}
where $ k(\partial_x\zeta)\eqdef-\partial_x \Big(\frac1{\sqrt{1+|\partial_x\zeta|^2}}\partial_x\zeta\Big)$,
$\llbracket P(t,x) \rrbracket \eqdef \lim\limits_{\varkappa\to 0} \Big( P(t,x,\epsilon\zeta(t,x)+\varkappa) - P(t,x,\epsilon\zeta(t,x)-\varkappa) \Big) $,\\
$\nabla_{x,z}^\mu\eqdef (\sqrt\mu\partial_x,\partial_z)^\top$ and $\left(\sqrt{1+\mu\epsilon^2|\partial_x\zeta|^2}\partial_{n}\phi_i \right)\id{z=\epsilon\zeta}= -\mu\epsilon(\partial_x\zeta) (\partial_x\phi_i)\id{z=\epsilon\zeta}+(\partial_z\phi_i)\id{z=\epsilon\zeta}$.

We may conveniently rewrite the above system as two evolution equations, thanks to the use of Dirichlet-Neumann operators. In order to do so, we define $\psi(t,x) \eqdef \phi_1(t,x,\epsilon\zeta(t,x))$,
and
\begin{align*}
G^{\mu}\psi & = G^{\mu}[\epsilon\zeta]\psi \eqdef \sqrt{1+\mu|\epsilon\partial_x\zeta|^2}\big(\partial_n \phi_1 \big)\id{z=\epsilon\zeta} = -\mu\epsilon(\partial_x\zeta) (\partial_x\phi_1)\id{z=\epsilon\zeta}+(\partial_z\phi_1)\id{z=\epsilon\zeta},\\
H^{\mu,\delta}\psi & = H^{\mu,\delta}[\epsilon\zeta]\psi\ \eqdef\ \big(\phi_2\big)\id{z=\epsilon\zeta} \ = \ \phi_2(t,z,\epsilon\zeta(t,x)),
\end{align*}
where $\phi_1$ and $\phi_2$ are uniquely defined (up to an additive constant for $\phi_2$) as the solutions of the Laplace problems implied by~\eqref{fE}.
The full system~\eqref{fE} then becomes
\begin{equation}\label{fE-DN}
\left\{ \begin{array}{l}
\displaystyle\partial_{ t}{\zeta} \ -\ \frac{1}{\mu}G^{\mu}\psi\ =\ 0, \\ \\
\displaystyle\partial_{ t}\Big(\partial_x H^{\mu,\delta}\psi-\gamma \partial_x{\psi} \Big)\ + \ (\gamma+\delta)\partial_x{\zeta} \ + \ \frac{\epsilon}{2} \partial_x\Big(|\partial_x H^{\mu,\delta}\psi|^2 -\gamma |\partial_x {\psi}|^2 \Big) \\
\displaystyle\hspace{5.8cm} = \mu\epsilon\partial_x\N^{\mu,\delta}-\frac{\gamma+\delta}{\Bo}\frac{\partial_x \big(k(\epsilon\sqrt\mu\partial_x\zeta)\big)}{{\epsilon\sqrt\mu}} ,
\end{array}
\right.
\end{equation}
where $\N^{\mu,\delta} = \N^{\mu,\delta}[\epsilon\zeta,\psi] \eqdef \dfrac{\big(\frac{1}{\mu}G^{\mu}\psi+\epsilon(\partial_x{\zeta}) (\partial_x H^{\mu,\delta}\psi) \big)^2\ -\ \gamma\big(\frac{1}{\mu}G^{\mu}\psi+\epsilon(\partial_x{\zeta}) (\partial_x{\psi}) \big)^2}{2(1+\mu|\epsilon\partial_x{\zeta}|^2)}$.

\subsection{Our new class of modified Green-Naghdi models}\label{S.Models}

Let us now present our new class of models, which aim at describing the flow in the situation where the shallowness parameter is small:
\[ \mu\ll 1.\]
Our systems use layer-averaged horizontal velocities as unknowns, that is defining
\[ \overline{u}_1(t,x)=\frac1{h_1(t,x)}\int_{\epsilon\zeta}^1 \partial_x\phi_1(t,x,z)\dd z, \quad \overline{u}_2(t,x)=\frac1{h_2(t,x)}\int_{-\delta^{-1}}^{\epsilon\zeta} \partial_x\phi_2(t,x,z)\dd z.\]
Here and thereafter, $h_1=1-\epsilon\zeta$ (resp. $h_2=\delta^{-1}+\epsilon\zeta$) always denotes the depth of the upper (resp. lower) layer and is assumed to be positive. One benefit of such a choice of unknowns is the {\em exact} identities (in contrast with $\O(\mu^2)$ approximations) due to mass conservation (see \eg~\cite[Proposition 3 and (23)]{DucheneIsrawiTalhouk14}):
\begin{equation}\label{eq-hiui}  \partial_t h_i \ + \  \epsilon \partial_x(h_i\overline{u}_i) \ = \ 0 \qquad (i=1,2).\end{equation}
These identities are then supplemented with the following $\O(\mu^2)$ approximations:
\begin{equation}\label{eq-GNi}
\partial_{ t}\Big( \overline{u}_i \ + \ \mu\Q_i[h_i]\overline{u}_i \Big) \ + \ \frac{\gamma+\delta}{1-\gamma}\partial_x\zeta \ + \ \frac{\epsilon}{2} \partial_x\big(|\overline{u}_i|^2\big) \ = \ \mu\epsilon\partial_x\big(\R_i[h_i, \overline{u}_i] \big) -\frac1{\gamma^{2-i}}\partial_x P_i ,
\end{equation}
where $P_2- P_1=-\frac{\gamma+\delta}{\Bo}\partial_x\Big(\frac1{\sqrt{1+\mu\epsilon^2|\partial_x\zeta|^2}}\partial_x\zeta\Big)$ and
\[
\Q_i^\F[h_i]\overline{u}_i\eqdef -\frac13 h_i^{-1}\partial_x \F_i^\mu\big\{h_i^3 \partial_x \F_i^\mu\ \overline{u}_i\big\}, 
 \quad  \R_i^\F[h_i, \overline{u}_i] \ \eqdef \ \frac12 \big( h_i\partial_x \F_i^\mu \overline{u}_i\big)^2 + \frac13 h_i^{-1}\overline{u}_i \ \partial_x\F_i^\mu\big\{ h_i^3\partial_x\F_i^\mu\overline{u}_i\big\},
\]
 where $\F_i^\mu\eqdef \F_i(\sqrt\mu D)$ ($i=1,2$) is a Fourier multiplier:
 \[ \widehat{\F_i^\mu \varphi} =\F_i(\sqrt\mu \xi ) \widehat{\varphi}(\xi).\]
  The choice of the Fourier multipliers is free, although natural properties for our purpose include $\F_i(0)=1$ and $\F_i'(0)=0$ (so that $\F_i^\mu-{\rm Id}$ is formally of size $\O(\mu)$), $\F_i(k)=\F_i(|k|)$ and $0\leq \F_i \leq 1$. A class of Fourier multipliers for which our rigorous results hold is precised in Definition~\ref{D.admissible}, thereafter, and we present three relevant examples below.
 \begin{itemize}
 \item $\F_i^{\rm id}(\sqrt{\mu}D)\equiv 1$ yields the classical two-layer Green-Naghdi model introduced in~\cite{Miyata85,Miyata87,ChoiCamassa99}, supplemented with the surface tension component.
 \item $\F_i^{\rm reg}(\sqrt{\mu}D)=(1+\mu\theta_i |D|^2)^{-1/2}$, with $\theta_i>0$ is an operator of order $-1$, and $\sqrt\mu\partial_x\F_i^\mu$ is a bounded operator in $L^2$, uniformly with respect to $\mu\geq 0$. As a consequence, this choice yields a well-posed system for sufficiently small and regular data, even in absence of surface tension.
 \item $\F_i^{\rm imp}(\sqrt{\mu}D)=\sqrt{\dfrac{3}{\delta_i^{-1}\sqrt{\mu}|D|\tanh(\delta_i^{-1}\sqrt{\mu}|D|)}-\dfrac3{\delta_i^{-2}\mu |D|^2}}$, with convention $\delta_1=1,\delta_2=\delta$. The modified Green-Naghdi system with this choice conforms perfectly with the full Euler system, as far as the linear stability analysis (see Section~\ref{S.KH}) is concerned. In particular, its dispersion relation is the same as the one of the full Euler system. One may thus hope for an improved precision when only weak nonlinearities ($\epsilon \ll 1$ in addition to $\mu\ll1$) are involved. More precisely, we expect that our model is precise, in the sense of consistency, at the order $\O(\mu^2\epsilon)$ instead of $\O(\mu^2)$ for the original Green-Naghdi system.
 \end{itemize}
 
In~\eqref{eq-GNi}, $P_1$ (or equivalently $P_2$) plays the role of a Lagrange multiplier enforcing the constraint resulting from~\eqref{eq-hiui} and $h_1+h_2=1+\delta^{-1}={\rm Cstt}$, namely
\begin{equation}\label{id-hiui}   \partial_x(h_1\overline{u}_1) \ + \ \partial_x(h_2\overline{u}_2) \ = \ 0.\end{equation}
Notice now that in the one-dimensional setting, supplementing the above equation with the conditions at infinity $\overline{u}_i\to 0$ ($| x|\to\infty$), one obtains the identity
\begin{equation}\label{u1u2}
 h_1\overline{u}_1 \ + \ h_2\overline{u}_2 \ = \ 0.
 \end{equation}
In this situation, one may equivalently rewrite~\eqref{eq-hiui}-\eqref{eq-GNi} as two scalar evolution equations:
\begin{equation}\label{eq-GN}
\left\{ \begin{array}{l}
\displaystyle\partial_{ t}{\zeta} \ + \ \partial_x w \ =\ 0, \\ \\
\displaystyle
\partial_{ t} \left( \frac{h_1+\gamma h_2}{h_1 h_2}w +\mu \Q[\epsilon\zeta]w\right)\ + \ (\gamma+\delta)\partial_x{\zeta} \ + \ \frac{\epsilon}{2} \partial_x\Big(\frac{h_1^2 -\gamma h_2^2 }{(h_1 h_2)^2}| w|^2\Big) \\
\hspace{5.9cm} = \ \mu\epsilon\partial_x\big( \R[\epsilon\zeta,w]\big)+\frac{\gamma+\delta}{\Bo}\partial_x ^2\Big(\frac1{\sqrt{1+\mu\epsilon^2|\partial_x\zeta|^2}}\partial_x\zeta\Big)
\end{array}
\right.
\end{equation}
where, by~\eqref{u1u2} and since $h_1,h_2>0$,
\begin{equation}\label{def-w}
 w\eqdef \frac{h_1 h_2}{h_1+\gamma h_2}(\overline{u}_2-\gamma \overline{u}_1) = -h_1\overline{u}_1=h_2\overline{u}_2
 \end{equation}
and
\begin{equation}\label{def-QR}
\dsp\Q^\F[\epsilon\zeta]w\eqdef \Q_2^\F[h_2](h_2^{-1}w)-\gamma\Q_1^\F[h_1](-h_1^{-1}w), \ 
 \R^\F[\epsilon\zeta, w] \eqdef \R_2[h_2,h_2^{-1}w]-\gamma\R_1^\F[h_1,-h_1^{-1}w].
\end{equation}

Our models are all precise in the sense of consistency at the same order as the original Green-Naghdi model, namely $\O(\mu^2)$. Furthermore, we fully justify system~\eqref{eq-GN} for a class of Fourier multipliers including all of the three examples above, in the sense that for sufficiently small and regular initial data, the model admits a unique strong solution which is proved to be close (when $\mu\ll 1$) to the solution of the full Euler system with corresponding initial data. However, only for the latter two examples is the smallness assumption ensuring the stability of the flow, by preventing the appearance of Kelvin-Helmholtz instabilities, consistent with the one of the full Euler system. The precise statements of these results are displayed in Section~\ref{S.justification}.

\subsection{Outline of the paper}\label{S.mr}

Some elementary properties of our models are studied in Section~\ref{S.Preliminaires}. More precisely, we show that all of our models enjoy a Hamiltonian structure, symmetry groups and conserved quantities, consistently with the already known properties of the original Green-Naghdi model and the full Euler system.

In Section~\ref{S.KH}, we recall the linear analysis of Kelvin-Helmholtz instabilities for the full Euler system, and extend the study to our models. In particular, we recover that the classical Green-Naghdi model overestimates Kelvin-Helmholtz instabilities, whereas our modified model with the choice $\F_i=\F_i^{\rm imp}$ recovers perfectly the behavior of the full Euler system.

Section~\ref{S.numerics} is dedicated to numerical illustrations of this phenomenon. We give two examples (with and without surface tension) where the original, improved and regularized Green-Naghdi models predict very different behavior. Roughly speaking, the flows are very similar as long as no instabilities are present, but the threshold above which Kelvin-Helmholtz instabilities are triggered varies dramatically from one model to the other.

Section~\ref{S.justification} is dedicated to the rigorous justification of our models, for a class of admissible Fourier multipliers $\F_i$. We first prove that the Cauchy problem for system~\eqref{eq-GN} with sufficiently regular initial data is well-posed under some hyperbolicity conditions. Roughly speaking, we show that provided a dimensionless parameter which depends on the large frequency behavior of $\F_i$ is sufficiently small, our system is well-posed for sufficiently regular and bounded initial data, on a time interval uniform with respect to compact sets of parameters; see Theorem~\ref{T.WP} for details. 
We then supplement this result with consistency (Proposition~\ref{P.consistency}) and stability (Proposition~\ref{P.stability}) results, which together offer the full justification of our models (Proposition~\ref{P.justification}).

Finally, we present in Section~\ref{S.SV} some improved results in the limiting case $\mu=0$, that is on the so-called Saint-Venant, or shallow-water system (with surface tension). 

Appendix~\ref{S.functional} is dedicated to the detailed presentation of our functional setting and some notations; and Appendix~\ref{S.proof} provides the proof of our main result, Theorem~\ref{T.WP}.

\section{Hamiltonian structure, group of symmetries and conserved quantities}\label{S.Preliminaires}

It is known since the seminal work of Zakharov~\cite{Zakharov68} that the full Euler system (with one layer) admits a Hamiltonian structure. This Hamiltonian structure has been extended to the two-layer case in~\cite{BenjaminBridges97,LuDaiZhang99,CraigGroves00}. We show in Section~\ref{S.Hamiltonian} that our models --- both under the form~\eqref{eq-hiui}-\eqref{eq-GNi} and~\eqref{eq-GN} --- also admit a Hamiltonian structure, so that our models could be derived from Hamilton's principle on an approximate Lagrangian. Such an approach has been worked out in~\cite{MilesSalmon85,CamassaHolmLevermore96,CamassaHolmLevermore97} (see also~\cite{Li02}) in the one-layer case, in~\cite{CraigGuyenneKalisch05} in the regime of small-amplitude long waves or small steepness, in~\cite{BarrosGavrilyukTeshukov07} in the two-layer case with free surface, and lacks for existing regularized Green-Naghdi systems in the literature~\cite{NguyenDias08,ChoiBarrosJo09,BoonkasameMilewski14,LannesMing}. We then enumerate the group of symmetries of the system (Section~\ref{S.symmetries}) that originates from the full Euler system (see~\cite{BenjaminOlver82}), and deduce the related conserved quantities (Section~\ref{S.conserved}).

\subsection{Hamiltonian formulation}\label{S.Hamiltonian}

Let us define the functional
\begin{multline} \label{H-GN4} \mathcal{H}^\F(h_1,h_2,\overline{u}_1,\overline{u}_2)=\frac12 \int_{\RR}  \frac1{\epsilon^2}\frac{\gamma+\delta}{1-\gamma}(h_2^2-\gamma h_1^2-C) + \frac{2}{\epsilon}(h_1+h_2-H)P_1+ \frac{ 2(\gamma +\delta)}{\mu\epsilon^2\Bo}\big(\sqrt{1+\mu|\partial_x h_2|^2}-1\big)\\
+ \gamma h_1|\overline{u}_1|^2+ h_2|\overline{u}_2|^2  +\mu\frac\gamma3 h_1(h_1\partial_x\F_1^\mu \overline{u}_1)^2+\mu\frac13 h_2(h_2\partial_x \F_2^\mu\overline{u}_2)^2
\end{multline}
with $H=1+\delta^{-1}$ (the total height) and $C=c_0+c_1 h_1+c_2 h_2$ (Casimir invariants) where $c_0,c_1,c_2$ are chosen such that the integral is well-defined. It is convenient for our purpose to define
\[ w_i\eqdef h_i\overline{u}_i \quad \text{ and } \quad v_i \eqdef  \A_i^\F[h_i] w_i \eqdef \frac1{h_i} w_i -\frac{\mu}3 h_i^{-1} \partial_x\F_i^\mu\big\{ h_i^3 \partial_x\F_i^\mu \{h_i^{-1}w_i\}\big\} .\]
Under the non-cavitation assumption $h_i>0$, $\A_i^\F[h_i]$ is a symmetric, coercive thus positive definite operator, and therefore we may equivalently write the Hamiltonian functional as
\begin{multline*} \mathcal{H}^\F(h_1,h_2,v_1,v_2)=\frac12 \int_{\RR}  \frac1{\epsilon^2}\frac{\gamma+\delta}{1-\gamma}(h_2^2-\gamma h_1^2-C) + \frac2\epsilon (h_1+h_2-H)P_1+ \frac{ 2(\gamma +\delta)}{\mu\epsilon^2\Bo}\big(\sqrt{1+\mu|\partial_x h_2|^2}-1\big) \\+ \gamma v_1\A_1^\F[h_1]^{-1} v_1+ v_2\A_2^\F[h_2]^{-1} v_2.
\end{multline*}
We deduce the functional derivatives
\begin{align*}
\frac{\delta \mathcal{H}^\F}{\delta v_1}&=\gamma \A_1^\F[h_1]^{-1} v_1 = \gamma h_1\overline{u}_1, \qquad\qquad  \frac{\delta \mathcal{H}^\F}{\delta v_2}=\A_2^\F[h_2]^{-1} v_2 = h_2\overline{u}_2,\\
\frac{\delta \mathcal{H}^\F}{\delta h_1}&=\frac1{\epsilon^2} \frac{\gamma+\delta}{1-\gamma}(-\gamma h_1+c_1)+\frac1\epsilon P_1+\frac\gamma2|\overline{u}_1|^2-\gamma\mu \R_1^\F[h_1,\overline{u}_1],\\
\frac{\delta \mathcal{H}^\F}{\delta h_2}&=\frac1{\epsilon^2}\frac{\gamma+\delta}{1-\gamma} (h_2+c_2)+\frac1\epsilon P_1-\frac{\gamma+\delta}{\epsilon^2\Bo}\partial_x \Big(\frac{\partial_x h_2}{\sqrt{1+\mu|\partial_x h_2|^2}}\Big)+\frac12|\overline{u}_2|^2-\mu \R_2^\F[h_2,\overline{u}_2].
\end{align*}
One can now observe that the system~\eqref{eq-hiui}-\eqref{eq-GNi} enjoys the non-canonical symplectic form
\[ \partial_t U+\epsilon J \frac{\delta \mathcal{H}^\F}{\delta U} \ = \ 0 \quad \text{ with } \quad U\eqdef \begin{pmatrix}
 h_1\\ v_1\\ h_2 \\ v_2
\end{pmatrix} \quad \text{ and } \quad J\eqdef \begin{pmatrix}
& \gamma^{-1} \partial_x & &\\ \gamma^{-1}  \partial_x & &  & \\ & & & \partial_x   \\ & & \partial_x  & 
\end{pmatrix}.
\]

Let us now indicate how the Hamiltonian formulation of our system can be identified with the well-known similar formulation of the full Euler system for internal waves, which we recall below. We base our discussion on the two-equations system~\eqref{eq-GN} as it allows a direct comparison with earlier works on the bi-fluidic full Euler system, such as~\cite{CraigGuyenneKalisch05}. We introduce the functional
\begin{equation}\label{H-Euler}
\mathcal{H}(\zeta,v)=\frac12\int_\RR (\gamma +\delta)\zeta^2+\frac{ 2(\gamma +\delta)}{\mu\epsilon^2\Bo}\big(\sqrt{1+\mu\epsilon^2|\partial_x\zeta|^2}-1\big)- v \mathcal G_1 (\gamma \mathcal G_2+ \mathcal G_1)^{-1} \mathcal G_2 v ,
 \end{equation}
where $\mathcal G_i=\mathcal G_i[\epsilon\zeta]$ are such that $\partial_x (\mathcal G_i \partial_x \varphi)=G_i \varphi$, and $G_i$ are Dirichlet-to-Neumann operators:
 \begin{itemize}
 \item $G_1: \varphi \mapsto \frac1\mu (\partial_n \phi)\id{z=\epsilon\zeta}$, where $\phi$ is the unique solution to \[ \mu\partial_x^2\phi+\partial_z^2\phi=0 \text{ in } \Omega_1^t, \quad \phi(x,\epsilon\zeta)=\varphi\quad \text{and }\quad \partial_z \phi(x,1)=0.\]
 \item $G_2: \varphi \mapsto \frac1\mu(\partial_n \phi)\id{z=\epsilon\zeta}$, where $\phi$ is the unique solution to
 \[ \mu\partial_x^2\phi+\partial_z^2\phi=0 \text{ in } \Omega_2^t, \quad \phi(x,\epsilon\zeta)=\varphi\quad \text{and }\quad \partial_z \phi(x,-\delta^{-1})=0.\]
 \end{itemize}
The operators $G_i$ are well-defined if $h_1(\epsilon\zeta),h_2(\epsilon\zeta)\geq h_0>0$ (see~\cite[Chapter~3]{Lannes}), and consequently $\mathcal G_i\partial_x\varphi$ as well, thanks to the identity~\cite[Prop.~3.35]{Lannes}
\[ G_i[\epsilon\zeta]\psi \ = \ -\partial_x (h_i\overline{u}_i) \quad(i=1,2). \]
Now, recall the construction of the full Euler in Section~\ref{S.fE} and in particular the definitions of $\phi_1(t,x,z),\phi_2(t,x,z)$ and $\psi(t,x)=\phi_1(t,x,\epsilon\zeta(t,x))$. Denoting
\[
 v(t,x)\eqdef \partial_x\left( \phi_2(t,x,\epsilon\zeta(t,x))-\gamma \phi_1(t,x,\epsilon\zeta(t,x))\right),
\]
we deduce the identities $v= \mathcal G_2^{-1}\mathcal G_1\partial_x\psi -\gamma \partial_x\psi= \mathcal G_2^{-1}(\mathcal G_1 +\gamma \mathcal G_2)\partial_x\psi$.
 
 With these definitions, one can check that the full Euler system~\eqref{fE-DN} can be written as
 \[ \partial_t \zeta=-\partial_x \left( \frac{\delta \mathcal{H}}{\delta v} \right) \qquad ; \qquad
 \partial_t v=-\partial_x \left(\frac{\delta \mathcal{H}}{\delta \zeta} \right). \]
 
We now can view the Hamiltonian functional of our system as a $\O(\mu^2)$ approximation of the Hamiltonian functional of the full Euler system. Indeed, one has (see \eg~\cite[Prop.~3.37]{Lannes} when $\F_i^\mu\equiv 1$, but the general case adds only a $\O(\mu^2)$ perturbation):
\[ \mathcal G_i[\epsilon\zeta]^{-1}= -\A_i^\F[h_i]+\O(\mu^2),\quad \A_i^\F[h_i]:w\mapsto \frac1{h_i}w-\mu\frac13 h_i^{-1}\partial_x \F_i^\mu\big\{h_i^3 \partial_x \F_i^\mu\{h_i^{-1}w\}\big\}.\]
Notice that, using the definitions of Section~\ref{S.Models},
\begin{align*}
\A^\F[\epsilon\zeta]w&\eqdef \gamma\A_1^\F[h_1]w+\A_2^\F[h_2]w=\frac{\gamma}{h_1}w+\frac1{h_2}w-\gamma\mu\frac13 h_1^{-1}\partial_x \F_1^\mu\big\{h_1^3 \partial_x \F_1^\mu\{h_1^{-1}w\}\big\}-\mu\frac13 h_2^{-1}\partial_x \F_2^\mu\big\{h_2^3 \partial_x \F_2^\mu\{h_2^{-1}w\}\big\}\\
&=\frac{h_1+\gamma h_2}{h_1h_2}w+\gamma\mu\Q_1^\F[h_1](h_1^{-1}w)+\mu\Q_2^\F[h_2](h_2^{-1}w)=\frac{h_1+\gamma h_2}{h_1h_2}w+\mu \dsp\Q^\F[\epsilon\zeta]w.
\end{align*}
In particular $\A^\F[\epsilon\zeta]$ is a symmetric, coercive positive definite operator, if $h_1(\epsilon\zeta),h_2(\epsilon\zeta)\geq h_0>0$. Plugging the above (truncated) approximation in the full Euler's Hamiltonian functional~\eqref{H-Euler} yields
\begin{equation}\label{H-GN}
 \mathcal{H}^\F(\zeta,v)\eqdef \frac12\int_{\RR} (\gamma +\delta)\zeta^2+\frac{ 2(\gamma +\delta)}{\mu\epsilon^2\Bo}\big(\sqrt{1+\mu\epsilon^2|\partial_x\zeta|^2}-1\big)+ v\A^\F[\epsilon\zeta]^{-1}v, 
\end{equation}
which corresponds to the previously defined Hamiltonian functional~\eqref{H-GN4} when ${h_1+h_2=H=1+\delta^{-1}}$. Indeed, we may introduce 
\begin{equation}\label{wtov}
 w\eqdef \A^\F[\epsilon\zeta]^{-1}v \quad \text{ and } \quad \overline{u}_i\eqdef (-1)^i w/ h_i \quad (i=1,2),
 \end{equation}
so that
\begin{align*}
 \mathcal{H}^\F(\zeta,v) &= \frac12\int_{\RR} (\gamma +\delta)\zeta^2+\frac{ 2(\gamma +\delta)}{\mu\epsilon^2\Bo}\big(\sqrt{1+\mu\epsilon^2|\partial_x\zeta|^2}-1\big)+ w\A^\F[\epsilon\zeta]w \\
&=\frac12\int_\RR (\gamma +\delta)\zeta^2+\frac{ 2(\gamma +\delta)}{\mu\epsilon^2\Bo}\big(\sqrt{1+\mu\epsilon^2|\partial_x\zeta|^2}-1\big)+ \gamma h_1|\overline{u}_1|^2+ h_2|\overline{u}_2|^2 \\
&\hspace{5cm} +\mu\frac\gamma3 h_1(h_1\partial_x\F_1^\mu \overline{u}_1)^2+\mu\frac13 h_2(h_2\partial_x \F_2^\mu\overline{u}_2)^2.
\end{align*}
Computing the functional derivatives, we get
\begin{align*}
\frac{\delta \mathcal{H}^\F}{\delta v} &=\A^\F[\epsilon\zeta]^{-1}v=w \; \qquad \text{ and }\\
\frac{\delta \mathcal{H}^\F}{\delta \zeta} &=(\gamma +\delta)\zeta-\frac{ \gamma +\delta}{\Bo}\partial_x\left(\frac{(\partial_x\zeta)}{(1+\mu\epsilon^2|\partial_x\zeta|^2)^{1/2}}\right)+ \frac\epsilon2\frac{h_1^2-\gamma h_2^2}{h_1^2h_2^2}|w|^2-\mu\epsilon \R^\F[\epsilon\zeta,w],
\end{align*}
and one recognizes the Hamiltonian structure of system~\eqref{eq-GN}:
\[ \partial_t \zeta =-\partial_x \left(\frac{\delta \mathcal{H}^\F}{\delta v } \right)\qquad ; \qquad \partial_t v =-\partial_x \left(\frac{\delta \mathcal{H}^\F}{\delta \zeta }\right).\]

\subsection{Symmetry groups}\label{S.symmetries}

Based on the work of~\cite{BenjaminOlver82}, one may list symmetry groups of our systems. Most of the symmetry groups of the full Euler system have no equivalent for the Green-Naghdi model, because they involve variations on the vertical variable, which is not accessible anymore. The most physical symmetries, however, remain. We list them below.

Assume $(\zeta,\overline{u}_i,P_i)$ (with $i=1,2$) satisfies~\eqref{eq-hiui}-\eqref{eq-GNi}. Then for any $\varkappa\in\RR$, $(\zeta^\varkappa, \overline{u}_i^\varkappa,P_i^\varkappa)$ also satisfies~\eqref{eq-hiui}-\eqref{eq-GNi}, where
\begin{enumerate}
\item Horizontal translation
\[(\zeta^\varkappa, \overline{u}_i^\varkappa,P_i^\varkappa)(t,x)\eqdef \Big(\zeta(t,x-\varkappa), \overline{u}_i(t,x-\varkappa) , P_i(t,x-\varkappa)\Big)\]
\item Time translation
\[(\zeta^\varkappa, \overline{u}_i^\varkappa,P_i^\varkappa)(t,x)\eqdef \Big(\zeta(t-\varkappa,x), \overline{u}_i(t-\varkappa,x) , P_i(t-\varkappa,x)\Big)\]
\item Variation of base-level for potential pressure
\[(\zeta^\varkappa, \overline{u}_i^\varkappa,P_i^\varkappa)(t,x)\eqdef \Big(\zeta(t,x), \overline{u}_i(t,x) , P_i(t,x)+\varkappa\Big)\]
\item Horizontal Galilean boost
\[(\zeta^\varkappa, \overline{u}_i^\varkappa,P_i^\varkappa)(t,x)\eqdef \Big(\zeta(t,x-\varkappa t), \overline{u}_i(t,x-\varkappa t)+\varkappa,P_i(t,x-\varkappa t)\Big)\] 
\end{enumerate}
It is interesting to notice that when working with formulation~\eqref{eq-GN}. i.,ii.,iii. induce symmetry groups as well (although iii. is trivial), but not iv.. Indeed, because the Galilean boost breaks the conditions $\overline{u}_i\to 0$ at infinity, the identity $w= -h_1\overline{u}_1=h_2\overline{u}_2$ cannot be deduced anymore from the constraint~\eqref{id-hiui} coming from the rigid-lid assumption. Thus the set of admissible solutions to~\eqref{eq-GN} is too restrictive to allow Galilean invariance.

\subsection{Conserved quantities}\label{S.conserved}

The first obviously conserved quantity of our systems, given by~\eqref{eq-hiui}, is the (excess of) mass:
\[\frac{\dd}{\dd t} \mathcal Z=0, \quad \mathcal Z(t)\eqdef \int_\RR \zeta(t,x)\dd x.\]
Equations~\eqref{eq-GNi} yield other conserved quantities: the ``horizontal velocity mass''
\[\frac{\dd}{\dd t} \mathcal V_i=0, \quad \mathcal V_i\eqdef \int_\RR \overline u_i +\mu\Q_i^\F[\epsilon\zeta]\overline u_i \dd x \qquad (i=1,2).\]
Choi and Camassa~\cite{ChoiCamassa99} observed a similar conservation law of the original Green-Naghdi model, and related this result to the irrotationality assumption of the full Euler system. Indeed, one has
\[ \mathcal V_i\approx \int_\RR \partial_x \big(\phi_i(t,x,\epsilon\zeta(t,x))\big) \dd x,\]
where the velocity potentials $\phi_i$ ($i=1,2$) have been defined in Section~\ref{S.fE} and the approximation is meant with precision $\O(\mu^2)$. Thus one has by construction $\frac{\dd}{\dd t} \mathcal V_i=\O(\mu^2)$, and it turns out that this approximately conserved quantity is actually exactly conserved by the Green-Naghdi flow (see also~\cite{GavrilyukKalischKhorsand}).
Of course, the linear combination (recall~\eqref{def-w} and~\eqref{def-QR}) 
\[ \mathcal V\eqdef \mathcal V_2-\gamma\mathcal V_1=\int_\RR \frac{h_1+\gamma h_2}{h_1 h_2}w+\mu\Q^\F[\epsilon\zeta]w\ \dd x\]
is a conserved quantity of system~\eqref{eq-GN}. The above conserved quantities may be interpreted as Casimir invariants of the Hamiltonian system.

After long but straightforward manipulations, one may check 
\[ \frac{\dd}{\dd t} \mathcal M=-\int_{-\infty}^\infty h_1\partial_x P_1+h_2\partial_x P_2 = \left[ h_1 P_1+h_2 P_2\right]^{+\infty}_{-\infty}, \quad \mathcal M\eqdef \int_{-\infty}^\infty \gamma h_1\overline u_1 +h_2 \overline u_2 \dd x.\]
The total horizontal momentum is in general {\em not} conserved. This somewhat unintuitive result is a consequence of the rigid-lid assumption and has been thoroughly studied in~\cite{CamassaChenFalquiEtAl12,CamassaChenFalquiEtAl13}.

One has the conservation of total energy:
\begin{multline*} \frac{\dd}{\dd t} \mathcal H^\F=0, \quad \mathcal H^\F \ = \ \int_{\RR} (\gamma +\delta)\zeta^2+\frac{ 2(\gamma +\delta)}{\mu\epsilon^2\Bo}\big(\sqrt{1+\mu\epsilon^2|\partial_x\zeta|^2}-1\big)\\+ \gamma h_1|\overline u_1|^2+ h_2|\overline u_2|^2 +\mu\frac\gamma3 h_1(h_1\partial_x \F_1^\mu \overline{u}_1)^2+\mu\frac13 h_2(h_2\partial_x \F_2^\mu \overline{u}_2)^2 \dd x.\end{multline*}
The conservation of energy may be deduced from the Hamiltonian structure of the system and more specifically from the invariance of the Hamiltonian with respect to time translation; see \eg~\cite{Shepherd90}. 
Similarly, the invariance with respect to horizontal space translation yields the conservation of the horizontal impulse:
\[ \frac{\dd}{\dd t}\mathcal I = 0, \qquad \mathcal I(\zeta,v)=\int_{\RR}\zeta v\ \dd x.\] 
This conserved quantity of the bi-fluidic Green-Naghdi model seems to have been unnoticed until now. In the one-layer case free surface, that is when $\gamma=0$, the horizontal impulse is related to the momentum (which is conserved in this situation) through the horizontal velocity mass:
\[ \delta^{-1}\mathcal V+\epsilon \mathcal I=\int_\RR h_2 v\ \dd x=\int_\RR h_2\overline{u}_2+\mu h_2\mathcal Q_2^\F[\epsilon\zeta]\overline{u}_2\ \dd x=\int_\RR h_2\overline{u}_2\ \dd x=\mathcal M.\]
In this situation, the symmetry with respect to Galilean boost yields an additional conserved quantity, which is the counterpart of the ``horizontal coordinate of mass centroid times mass'' for the full Euler system as defined in~\cite{BenjaminOlver82}, namely
 \[ \mathcal C(t)\eqdef \int_\RR C(t,x)\dd x, \qquad \text{ with } \quad C(t,x)=\zeta x- t h_2 v \quad \text{ or, equivalently, } \quad \t C(t,x)=\zeta x- t w .\]
The conservation of $\mathcal C$ can be deduced from the above, as~\eqref{eq-hiui} yields
\[ \frac{\dd}{\dd t} \mathcal C =\int_\RR x\partial_t\zeta - w -t\partial_t w\ \dd x=\int_\RR -x\partial_x w - w \ \dd x-t \frac{\dd}{\dd t}\int_\RR w\dd x= -t \frac{\dd}{\dd t} \mathcal M =0.\]

\begin{Remark}[Traveling waves]
The Hamiltonian structure and conserved quantities of our system provide a natural ground for the study of traveling wave solutions. For instance, one easily checks that critical points of the functional $\mathcal{H}^\F(\zeta,v)-c\mathcal{I}(\zeta,v)$ yield traveling wave solutions to system~\eqref{eq-GN}. However, as soon as the surface tension component or non-local operators are present, explicit formula such as the one provided in~\cite{Miyata85,Miyata87} for the original Green-Naghdi model seem hopeless. We thus postpone to a further study the analysis of traveling wave solutions to our system.
\end{Remark}

\section{Kelvin-Helmholtz instabilities}\label{S.KH}
In this section, we formally investigate the conditions for the appearance of Kelvin-Helmholtz instabilities for the full Euler system as well as for our Green-Naghdi models, based on the linear analysis of infinitely small disturbances of steady states.
We say that the system suffers from Kelvin-Helmholtz instabilities when arbitrarily high frequency unstable modes are predicted by this linear theory.

\subsection{The full Euler system} \label{S.KH-fE}
We linearize~\eqref{fE} around the constant shear solution: $( \zeta=0+\varkappa \tilde\zeta,\phi_1=\underline{u}_1 x+\varkappa\tilde\phi_1,\phi_2=\underline{u}_2 x+\varkappa\tilde\phi_2)$ where $\underline{u}_1$ and $\underline{u}_2$ are constants, and $\varkappa\ll1$. Notice that by~\eqref{id-hiui}, one has necessarily $\underline{u}_1+\delta^{-1} \underline{u}_2=0$, and therefore $\underline{u}_1=\frac{- \underline{v}}{\gamma+\delta}$ and $\underline{u}_2=\frac{\delta \underline{v}}{\gamma+\delta}$, where $\underline{v}\eqdef \underline{u}_2-\gamma \underline{u}_1$. When withdrawing $\O(\varkappa^2)$ terms, one obtains the following linear system (see \eg~\cite{LannesMing}):
\begin{equation}\label{shear-fE}
\left\{ \begin{array}{l}
\displaystyle\partial_{ t}\t \zeta\ + \ c(D) \partial_x\t\zeta+b(D)\partial_x\t v \ =\ 0, \\ \\
\displaystyle\partial_{ t}\t v \ + \ a(D)\partial_x\t \zeta+c(D)\partial_x\t v \ =\ 0 
\end{array}
\right.
\end{equation}
where $\t v\eqdef\partial_x\big((\t \phi_2-\gamma\t\phi_1)\id{z=\epsilon\zeta}\big)$, and
\[
c(k)\eqdef \frac{\delta\tanh(\sqrt{\mu}|k|)-\gamma \tanh(\sqrt{\mu}\delta^{-1}|k|)}{\tanh(\sqrt{\mu}|k|)+\gamma \tanh(\sqrt{\mu}\delta^{-1}|k|)}\frac{\underline{\epsilon v}}{\gamma+\delta},\qquad
b(k)\eqdef \frac1{\sqrt{\mu}|k|}\frac{\tanh(\sqrt{\mu}|k|)\tanh(\sqrt{\mu}\delta^{-1}|k|)}{\tanh(\sqrt{\mu}|k|)+\gamma \tanh(\sqrt{\mu}\delta^{-1}|k|)}\]
and
\[
a(k)\eqdef (\gamma+\delta)(1+\frac{|k|^2}{\Bo})-\frac{\sqrt{\mu}|k|\gamma }{\tanh(\sqrt{\mu}|k|)+\gamma \tanh(\sqrt{\mu}\delta^{-1}|k|)} \frac{(\delta+1)^2}{(\delta+\gamma)^2}|\epsilon\underline{v}|^2.\]
Since $b(k)>0$, the mode with wavenumber $k$ is stable (namely the planewave solutions $ e^{i(kx-\omega_\pm(k)t)}$ satisfy $\omega_\pm(k)\in\RR$) if and only if $ a(k)>0$. For small values of $k$, this yields the necessary condition
\[\gamma \epsilon^2|\underline{v}|^2 \frac{\delta(\delta+1)^2}{(\delta+\gamma)^3} <\gamma+\delta.\]
For large values of $k$, one approximates $\tanh(\sqrt{\mu}|k|)+\gamma \tanh(\sqrt{\mu}\delta^{-1}|k|)\approx 1+\gamma$, and deduce
\[\min_{|k|}\big\{a(k)\}\approx(\gamma+\delta)-\frac{\gamma^2 \mu \Bo\ (\delta+1)^4}{4(1+\gamma)^2(\gamma+\delta)^5}\epsilon^4|\underline{v}|^4.
\]
The full Euler system is therefore stable for each wavenumber provided
\[ \Upsilon |\underline{v}|^2\eqdef \gamma\big(1+\sqrt{\mu\Bo}\big)\epsilon^2|\underline{v}|^2 \quad \text{ is sufficiently small.}\]

\subsection{Our class of Green-Naghdi systems}\label{S.KH-GN}

When linearizing~\eqref{eq-GN} around the constant shear solution, $U\eqdef (\zeta,w)^\top \eqdef (0+\varkappa \t\zeta,\underline w+\varkappa \t w)^\top $, where $\underline w$ is constant, one obtains the following system:
\begin{equation}\label{shear}
\left\{ \begin{array}{l}
\dsp\partial_{ t}\t \zeta\ + \ \partial_x\t w \ =\ 0, \\ \\
\dsp \overline{b}^\F(D)\partial_{ t}\t w \ -\ \overline{c}^\F(D)\partial_t \t\zeta \ + \ \overline{a}^\F(D)\partial_x\t \zeta\ +\ \overline{c}^\F(D)\partial_x\t w \ =\ 0, 
\end{array}
\right.
\end{equation}
with
\[
\overline{b}^\F(k)\eqdef\gamma+\delta \ + \ \mu \frac{|\F_2^\mu|^2+\gamma\delta|\F_1^\mu|^2}{3\delta}|k|^2 ,\qquad \overline{c}^\F(k)= \epsilon\underline w (\delta^2-\gamma)\ + \ \mu\epsilon\underline w \frac{ |\F_2^\mu|^2-\gamma|\F_1^\mu|^2}{3}|k|^2,\]
and
\[\overline{a}^\F(k)\eqdef (\gamma+\delta)\ -\ \epsilon^2 |\underline w|^2 (\delta^3+\gamma) \ - \ \mu\epsilon^2|\underline w|^2 \frac{\delta|\F_2^\mu|^2+\gamma|\F_1^\mu|^2}{3} |k|^2 \ +\frac{\gamma+\delta}{\Bo}|k|^2\]
where (with a slight abuse of notations) $\F_i^\mu=\F_i(\sqrt\mu k)$.

The stability criterion is more easily seen when rewriting system~\eqref{shear} with unknown
\footnote{\label{f-unknown}This change of unknown is not without signification. It consists in writing the system with the ``original'' variables of the full Euler system: $\zeta,v=\partial_x\big((\phi_2-\gamma\phi_1)\id{z=\epsilon\zeta}\big)$, or more precisely $\O(\mu^2)$ approximations of these variables. 
It is interesting to note that the nonlinear hyperbolicity condition in Theorem~\ref{T.WP} matches the naive sufficient condition for stability $\overline{a}^\F(k)>0$ coming from~\eqref{shear} but not the sharp condition $a^\F(k)>0$ (in particular when $\gamma\to0$); see also Remark~\ref{R.problems}. 
}
\[ \t v =(\gamma+\delta)\t w-\epsilon \underline w(\delta^2-\gamma)\t\zeta \ - \ \mu\big\{ \frac{|\F_2^\mu|^2+\gamma\delta|\F_1^\mu|^2}{3\delta}\partial_x^2 \t w - \epsilon \underline w\frac{|\F_2^\mu|^2-\gamma|\F_1^\mu|^2}{3} \partial_x^2 \t \zeta \big\} .\]
Indeed, one obtains in that case
\begin{equation}\label{shear-2}
\left\{ \begin{array}{l}
\displaystyle\partial_{ t}\t \zeta\ + \ c^\F(D) \partial_x\t\zeta+b^\F(D)\partial_x\t v \ =\ 0, \\ \\
\displaystyle\partial_{ t}\t v \ + \ a^\F(D)\partial_x\t \zeta+c^\F(D)\partial_x\t v \ =\ 0 
\end{array}
\right.
\end{equation}
with
\[c^\F(k)=\epsilon\underline w\frac{\frac{\delta^2-\gamma}{\gamma+\delta}+\mu\frac{|\F_2^\mu|^2-\gamma|\F_1^\mu|^2}{3(\gamma+\delta)}|k|^2}{1+\mu\frac{|\F_2^\mu|^2+\gamma\delta|\F_1^\mu|^2}{3\delta(\gamma+\delta)}|k|^2}, \qquad b^\F(k)=\frac{\frac1{\gamma+\delta}}{1+\mu\frac{|\F_2^\mu|^2+\gamma\delta|\F_1^\mu|^2}{3\delta(\gamma+\delta)}|k|^2} \]
and
\[ a^\F(k)=(\gamma+\delta)+\frac{\gamma+\delta}{\Bo}|k|^2-|\epsilon\underline w|^2\frac{\gamma(\delta+1)^2}{\delta(\gamma+\delta)}\frac{(\delta^2+\frac13\mu|k|^2|\F_2^\mu|^2)(1+\frac13\mu|k|^2|\F_1^\mu|^2)}{1+\mu\frac{|\F_2^\mu|^2+\gamma\delta|\F_1^\mu|^2}{3\delta(\gamma+\delta)}|k|^2}.\]
Analogously with the full Euler system, the mode with wavenumber $k$ is stable if and only if $ a^\F(k)>0$.
Let us quickly discuss the three examples introduced in Section~\ref{S.Models}.
\begin{itemize}
\item In the case of the original Green-Naghdi system, $\F_i^{\rm id}(\sqrt{\mu}D)\equiv 1$, the condition to ensure that all modes are stable is
\[ \Upsilon_{\rm GN}|\underline w|^2\eqdef \gamma (1+\mu\Bo) \epsilon^2|\underline w|^2 \quad \text{ is sufficiently small.}\]
This is more stringent than the corresponding condition on the full Euler system in the oceanographic context, where one expects $\mu\Bo \gg 1$, as noticed in~\cite{JoChoi02,LannesMing}.

\item If $\F_i^{\rm imp}(\sqrt{\mu}D)=\sqrt{\dfrac{3}{\delta_i^{-1}\sqrt{\mu}|D|\tanh(\delta_i^{-1}\sqrt{\mu}|D|)}-\dfrac3{\delta_i^{-2}\mu |D|^2}}$ (with convention $\delta_1=1,\delta_2=\delta$), then the linearized system~\eqref{shear-2} is {\em exactly}~\eqref{shear-fE} (recall that by~\eqref{id-hiui}, $\underline{w}=\frac{1}{\gamma+\delta}\underline{v}$):
\[ a^\F(k)=a(k) \quad ; \quad b^\F(k)=b(k) \quad ; \quad c^\F(k)=c(k) .\]
In particular, the stability criterion of this Green-Naghdi model corresponds to the one of the full Euler system. As previously mentioned, this also shows that the model has the same dispersion relation as the full Euler system, as this corresponds to setting $\underline w=0$. 

\item In the case $\F_i^{\rm reg}(\sqrt{\mu}D)=\dfrac{1}{\sqrt{1+\mu\theta_i |D|^2}}$, one remarks that
\[ (\gamma+\delta)>|\epsilon\underline w|^2\frac{\gamma(\delta+1)^2}{\delta(\gamma+\delta)}\big(\delta^2+\frac1{3\theta_2}\big)\big(1+\frac1{3\theta_1}\big),\]
is a sufficient condition to ensure that all modes are stable, and {\em does not require the presence of surface tension}. A natural choice is $\theta_i=\frac{1}{15\delta_i^2}$ with convention $\delta_1=1,\delta_2=\delta$, for the Taylor expansion of the dispersion relation around $\mu=0$ to fit the one of the full Euler system, at augmented order $\O(\mu^3)$, instead of the $\O(\mu^2)$ precision of the original Green-Naghdi system. 
\end{itemize}

In Figure~\ref{F.instability}, we plot the instability curves corresponding to $a^\F(k)$ for the three above examples. More precisely, for fixed $k\in\RR$, we plot the value of $\epsilon^2|\underline{w}|^2$ above which $a^\F(k)>0$, and thus instabilities are triggered. One clearly sees a great discrepancy for large wavenumbers. In particular the minimum of the curve, which corresponds to the domain where all wavenumbers are stable, not only varies for each model, but also is obtained at different values of $k$.
\begin{figure}[htp]
\includegraphics[width=1\textwidth]{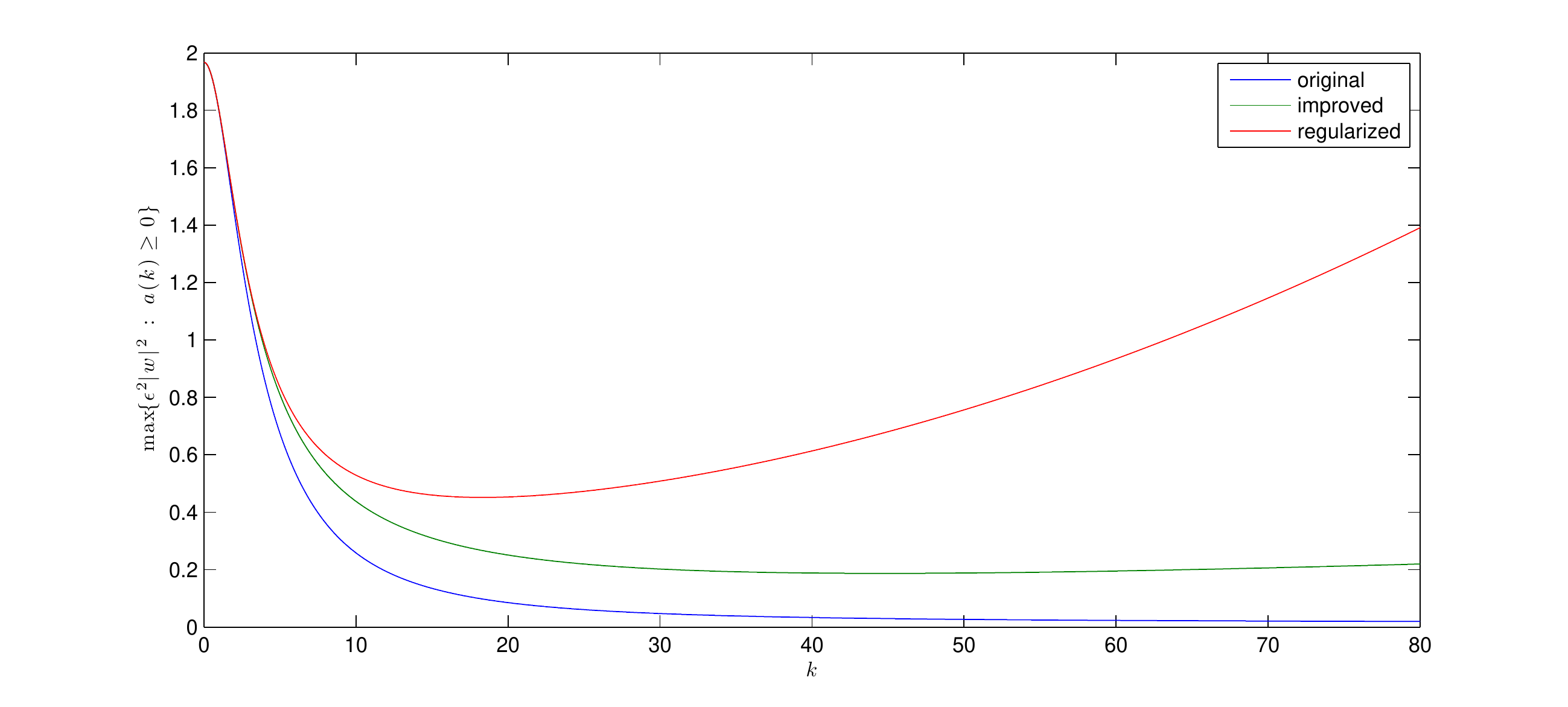}
\caption{Instability curves of the modified Green-Naghdi models with $\F_i=1$ (original), ${\F_i=\F_i^{\rm reg}}$ (regularized) and $\F_i=\F_i^{\rm imp}$ (improved). The last one coincides with the full Euler system counterpart. The dimensionless parameters are $\gamma=0.95$, $\delta=0.5$, $\epsilon=0.5$, $\mu=0.1$, $\Bo^{-1}=5\times 10^{-4}$.}
\label{F.instability}
\end{figure}

\section{Numerical illustrations}\label{S.numerics}

We numerically compute several of our Green-Naghdi systems, with and without surface tension, in order to observe how the different frequency dispersion may affect the appearance of Kelvin-Helmholtz instabilities.

As in Figure~\ref{F.instability}, we focus on the three aforementioned examples: $\F_i=1$ (original), $\F_i=\F_i^{\rm reg}$ with $\theta_i=\frac{1}{15\delta_i^2}$ (regularized) and $\F_i=\F_i^{\rm imp}$ (improved). Values for the dimensionless parameters are as in Figure~\ref{F.instability} above. The initial data is $\zeta(0,x)=-e^{-4| x|^2}$ and $w(0,x)= 0$.

Figures~\ref{F.with-surf-t2} and~\ref{F.with-surf-t3} represent the predicted flow at time $t=2$ and $t=3$, in the situation with surface tension. Figure~\ref{F.without-surf-t2} represents the predicted flow at time $t=2$ in the situation without surface tension. Each time, the left panel plots the flux, $w(t,x)$ (or rather $1+w$ for the sake of readability) as well as the interface deformation, $\zeta(t,x)$; while the right panel plots the spatial Fourier transform of the interface deformation, $\widehat{\zeta}(t,k)$. The dashed line represents the initial data, and the three colored lines the predictions of each model.

\begin{figure}[htbp]
\hfill\includegraphics[width=.95\textwidth]{./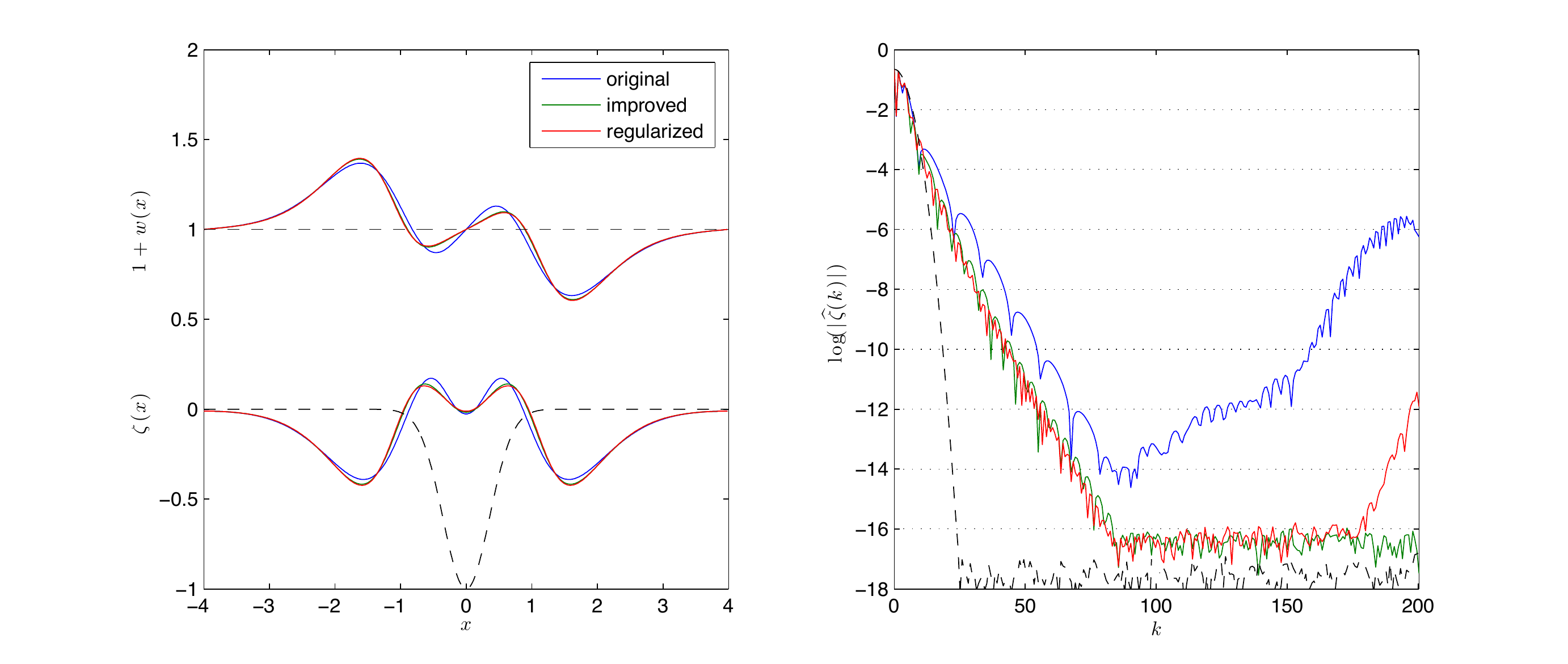}\hfill
\caption{Prediction of the Green-Naghdi models, with surface tension, at time $t=2$.}
\label{F.with-surf-t2}
\end{figure}
\begin{figure}[htbp]
\hfill\includegraphics[width=.95\textwidth]{./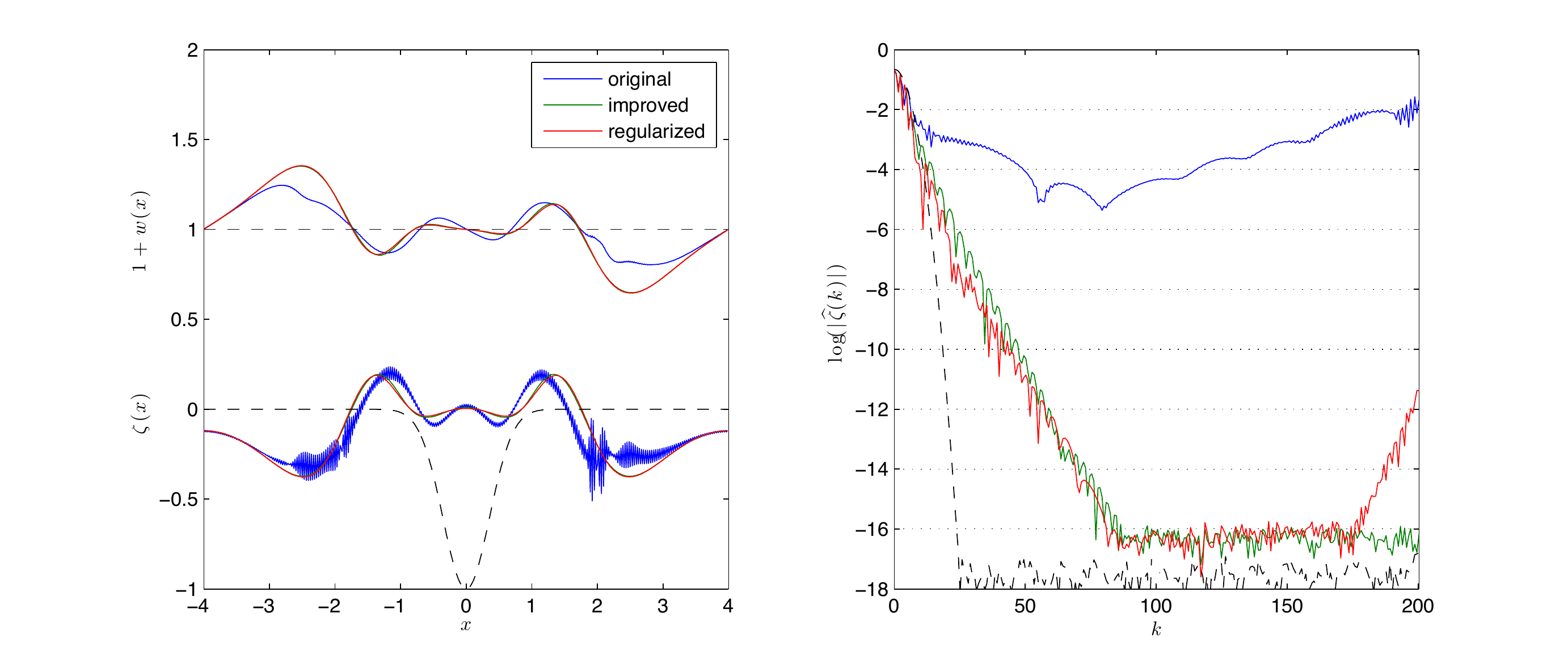}\hfill
\caption{Prediction of the Green-Naghdi models, with surface tension, at time $t=3$.}
\label{F.with-surf-t3}
\end{figure}
\begin{figure}[htbp]
\hfill\includegraphics[width=.95\textwidth]{./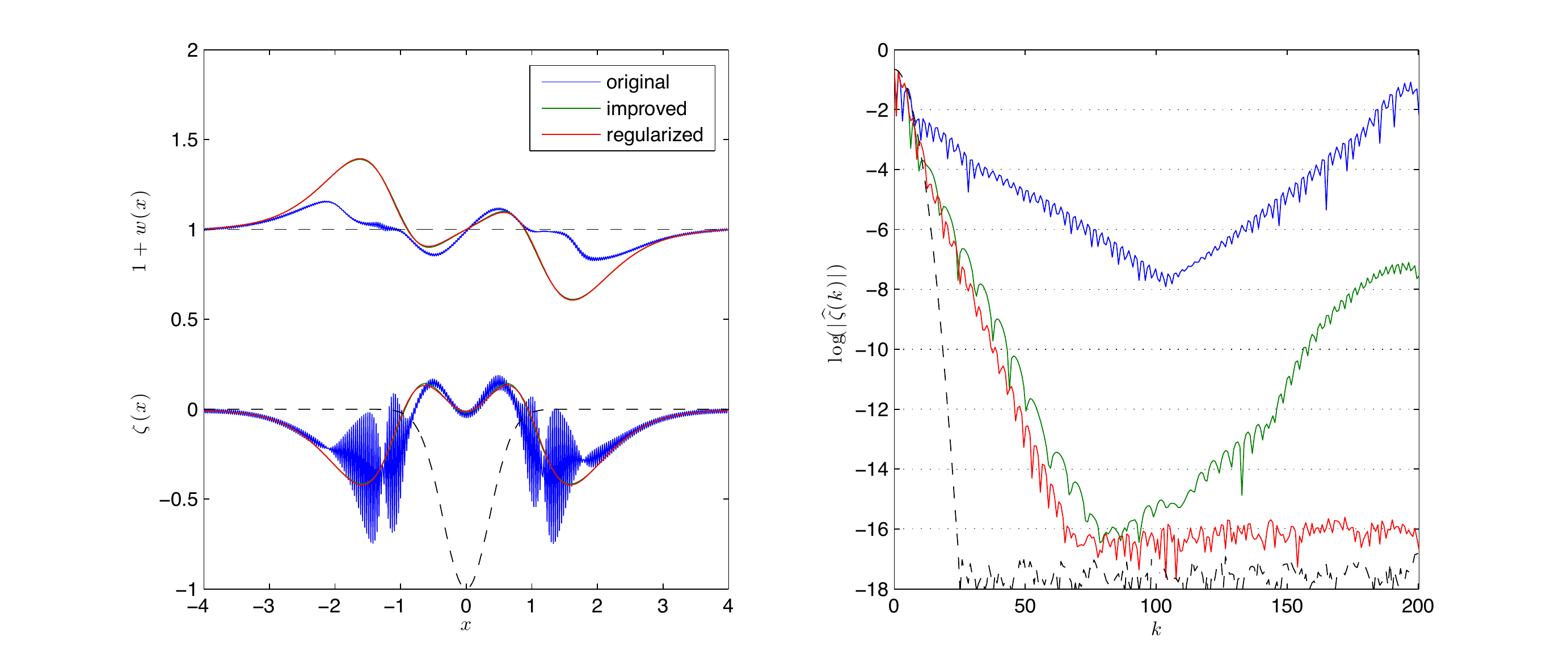}\hfill
\caption{Prediction of the Green-Naghdi models, without surface tension, at time $t=2$.}
\label{F.without-surf-t2}
\end{figure}

\paragraph{Discussion} In the situation with surface tension, we see that at time $t=2$ (Figure~\ref{F.with-surf-t2}), the predictions of the three models are similar. Only the original model shows small but clear discrepancy, and most importantly early signs of instabilities. This situation is clearer when looking at the Fourier transform, right panel. We see the existence of a strong large frequency component which has grown from machine precision noise. As expected, modes with higher wavenumbers grow faster. 
Notice the regularized model also exhibits a non-trivial (although very small) high-frequency component. This component is however stable with respect to time: it is not produced by Kelvin-Helmholtz instabilities, but rather by numerical errors. It does not appear when surface tension is absent (Figure~\ref{F.without-surf-t2}).

At later time $t=3$ (Figure~\ref{F.with-surf-t3}), the Kelvin-Helmholtz instabilities have completely destroyed the flow of the original model. For such irregular data, spectral methods are completely inappropriate and we do not claim that our numerical scheme computes any relevant approximate solution. Meanwhile, the flows predicted by the regularized and improved models remain smooth and are very similar. When running the numerical simulation for much larger time, our computations indicate that the flow of the regularized and improved models remains smooth for all time.

When surface tension is neglected from the models, we see (Figure~\ref{F.without-surf-t2}) that at time $t=2$, Kelvin-Helmholtz instabilities have already destroyed the flow for the original model, and the improved model shows early signs of instabilities in its high-frequency component. The flow predicted by the regularized model, however, remains smooth and is very similar to the flow with surface tension.

\paragraph{Numerical scheme} Let us now briefly present our numerical scheme. It is very natural in our context to use spectral methods~\cite{Trefethen} as for the space discretization, since Fourier multipliers are dealt similarly as regular differential operators. Such methods yield an exponential accuracy with respect to the spatial mesh size for smooth data. In our simulations, we used $2^9=512$ equally distributed points (with periodic boundary conditions) on $x\in [-4;4]$. As for the time evolution, we use the Matlab solver \texttt{ode45}, which is based on the fourth and fifth order Runge-Kutta-Merson method~\cite{ShampineReichelt97}, with a relative tolerance of $10^{-10}$ and absolute tolerance of $10^{-12}$. It is convenient to solve the system written in terms of $\zeta$ and $v=\frac{h_1+\gamma h_2}{h_1 h_2}w +\mu \Q^\F[\epsilon\zeta]w$; see~\eqref{wtov}, although this requires to solve at each time step $w$ as a function of $\zeta$ and $v$.

In Table~\ref{T.precision} we display the numerical variations, between time $t=1$ and initial time $t=0$, of the conserved quantities (discussed in Section~\ref{S.conserved}) as a very rough mean to appreciate the precision of the numerical scheme. One sees that the agreement is excellent, except when the horizontal impulse is concerned. The latter shows a great sensibility to the presence of large frequency components, indicating that such component, as expected, affects the precision of the numerical scheme. It is remarkable that the other conserved quantities do not suffer from such a loss of precision.

\begin{table}[htp] \begin{small}
\begin{tabular}{c|ccc||ccc}
 & \multicolumn{3}{c||}{With surface tension} & \multicolumn{3}{c}{Without surface tension} \\
 & original & regularized & improved & original & regularized & improved \\ 
\hline 
Mass $\mathcal Z$ & \footnotesize 1.5321\ $10^{-14}$ & \footnotesize 1.5654\ $10^{-14}$ & \footnotesize 1.5876\ $10^{-14}$ &\footnotesize 1.299\ $10^{-14}$ & \footnotesize 1.5321\ $10^{-14}$ & \footnotesize 1.4988\ $10^{-14}$ \\
Velocity $\mathcal V$ & \footnotesize -6.3135\ $10^{-16}$ & \footnotesize -7.3647\ $10^{-17}$ & \footnotesize 9.1148\ $10^{-16}$ & 
\footnotesize -1.4962e-15\ $10^{-15}$ & \footnotesize 3.7576\ $10^{-16}$ &\footnotesize 5.3084\ $10^{-16}$ \\
Impulse $\mathcal I$ & \footnotesize 5.4362\ $10^{-10}$ & \footnotesize 6.6559\ $10^{-17}$ & \footnotesize -2.1199\ $10^{-17}$ & \footnotesize -1.3741\ $10^{-2}$ & \footnotesize -4.4256\ $10^{-17}$ & \footnotesize 2.2034\ $10^{-14}$\\
Energy $\mathcal E$ & \footnotesize -3.5282\ $10^{-12}$ & \footnotesize -7.3064\ $10^{-13}$ & \footnotesize -4.8171\ $10^{-12}$ & \footnotesize -3.3628\ $10^{-12}$ & \footnotesize -5.1278\ $10^{-12}$ & \footnotesize -4.7436\ $10^{-12}$ \\
\hline 
\end{tabular} \end{small}
\caption{Difference between conserved quantities at time $t=2$ and time $t=0$.}
\label{T.precision}
\end{table}

\section{Full justification}\label{S.justification}
This section is dedicated to the proof of the main results of this work, namely the rigorous justification of the class of modified Green-Naghdi systems introduced in Section~\ref{S.Models}, \ie
\begin{equation}\label{GN-w}
\left\{ \begin{array}{l}
\displaystyle\partial_{ t}{\zeta} \ + \ \partial_x w \ =\ 0, \\ \\
\partial_{ t} \left( \frac{h_1+\gamma h_2}{h_1 h_2}w +\mu \Q^\F[\epsilon\zeta]w\right)\ + \ (\gamma+\delta)\partial_x{\zeta} \ + \ \frac{\epsilon}{2} \partial_x\Big(\frac{h_1^2 -\gamma h_2^2 }{(h_1 h_2)^2}| w|^2\Big) \\
\hspace{5.9cm} = \ \mu\epsilon\partial_x\big( \R^\F[\epsilon\zeta]w\big)+\frac{\gamma+\delta}{\Bo}\partial_x ^2\Big(\frac1{\sqrt{1+\mu\epsilon^2|\partial_x\zeta|^2}}\partial_x\zeta\Big) ,
\end{array}
\right.
\end{equation}
where
\begin{align*}
 \Q^\F[\epsilon\zeta]w&=-\frac13 h_2^{-1}\partial_x \F_2^\mu\big\{h_2^3 \partial_x \F_2^\mu\{h_2^{-1}w\}\big\}-\frac\gamma3 h_1^{-1}\partial_x \F_1^\mu\big\{h_1^3 \partial_x \F_1^\mu\{h_1^{-1}w\}\big\},\\
 \R^\F[\epsilon\zeta,w]&=\frac13 w h_2^{-2}\partial_x \F_2^\mu\big\{h_2^3\partial_x \F_2^\mu \{h_2^{-1}w\}\big\}-\frac\gamma3 w h_1^{-2}\partial_x \F_1^\mu\big\{h_1^3\partial_x \F_1^\mu \{h_1^{-1}w\}\big\} \\
& \quad +\frac12 \big(h_2\partial_x \F_2^\mu\{h_2^{-1}w\}\big)^2-\frac\gamma2 \big(h_1\partial_x \F_1^\mu\{h_1^{-1}w\}\big)^2.
\end{align*}
Here and thereafter, we always denote $h_1=h_1(\epsilon\zeta)=1-\epsilon\zeta$ and $h_2=h_2(\epsilon\zeta)=\delta^{-1}+\epsilon\zeta$.
Let us also recall that $\F_i^\mu$ ($i=1,2$) denotes a Fourier multiplier:
\[ \F_i^\mu=\F_i(\sqrt\mu D) \quad \text{\ie} \quad \widehat{\F_i^\mu f}(\xi)=\F_i(\sqrt\mu \xi)\widehat f(\xi).\]
In order to allow for the functional analysis detailed in Section~\ref{S.product}, we restrict ourselves to admissible Fourier multipliers $\F_i^\mu$, in the following sense.
\begin{Definition}\label{D.admissible}
The operator $\F_i^\mu= \F_i(\sqrt\mu D)$ ($i=1,2$) is {\em admissible} if it satisfies:
\begin{enumerate}
\item $\F_i:\RR\to\RR^+$ is even and positive;
\item $\F_i$ is of twice differentiable, $\F_i(0)=1$, $\F_i'(0)=0$ and $\sup_{k\in\RR}|\F_i''(k)|\leq C_\F<\infty$;
\item $k\mapsto |k|\F_i(k)$ is sub-additive, namely for any $k,l\in \RR, |k+l|\F_i(k+l)\leq |k|\F_i(k)+|l|\F_i(l)$.
\end{enumerate}
In that case, one can define appropriate pairs $K_{\F_i}\in\RR^+$ and $\varsigma\in[0,1]$ such that
\begin{equation}\label{def-Ki-sigma}
\forall k\in\RR,\quad \F_i(k)\leq K_{\F_i} |k|^{-\varsigma}.
\end{equation}
\end{Definition}
\begin{Proposition} \label{P.examples-are-subadditive}
The three examples provided in Section~\ref{S.Models}, namely $\F_i^{\rm id},\F_i^{\rm reg},\F_i^{\rm imp}$ are admissible, and satisfy~\eqref{def-Ki-sigma} with (respectively) $\varsigma=0,1,1/2$.
\end{Proposition}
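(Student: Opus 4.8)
The plan is to verify the three conditions of Definition~\ref{D.admissible} for each of the three Fourier multipliers, item by item, and then extract the decay rate~\eqref{def-Ki-sigma}. For $\F_i^{\rm id}\equiv 1$ everything is immediate: evenness, positivity, $\F_i(0)=1$, $\F_i'=\F_i''=0$ are trivial, the function $k\mapsto|k|$ is sub-additive by the triangle inequality, and~\eqref{def-Ki-sigma} holds with $\varsigma=0$, $K_{\F_i}=1$. For $\F_i^{\rm reg}(k)=(1+\theta_i k^2)^{-1/2}$ (writing $k$ for $\sqrt\mu\xi$), evenness and positivity are clear; a direct computation gives $\F_i(0)=1$ and $\F_i'(0)=0$, and $\F_i''$ is a bounded rational function so $\sup|\F_i''|<\infty$. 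The key point is sub-additivity of $g(k)\eqdef|k|\F_i^{\rm reg}(k)=|k|(1+\theta_i k^2)^{-1/2}=(\theta_i+k^{-2})^{-1/2}$ for $k\neq0$: I would note $g$ is non-negative, even, and that $k\mapsto g(k)$ is concave on $(0,\infty)$ (equivalently $g^2$ is concave and increasing, so $g$ is concave there), whence a standard lemma gives that a non-negative, even function which is concave on $[0,\infty)$ with $g(0)=0$ is sub-additive. Finally $g(k)=(\theta_i+k^{-2})^{-1/2}\leq\theta_i^{-1/2}$, so $\F_i^{\rm reg}(k)\leq\theta_i^{-1/2}|k|^{-1}$, giving~\eqref{def-Ki-sigma} with $\varsigma=1$, $K_{\F_i}=\theta_i^{-1/2}$.

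For $\F_i^{\rm imp}(k)=\big(\tfrac{3}{r\tanh r}-\tfrac3{r^2}\big)^{1/2}$ with $r=\delta_i^{-1}k$, I would first record the Taylor expansion $\tfrac1{r\tanh r}=\tfrac1{r^2}+\tfrac13-\tfrac1{45}r^2+O(r^4)$, so that $\F_i^{\rm imp}(k)^2=1-\tfrac1{15}r^2+O(r^4)$; this simultaneously gives that $\F_i^{\rm imp}$ extends smoothly through $k=0$ with value $1$, that $\F_i^{\rm imp}$ is even and (being a square root of a quantity one checks stays in $(0,1]$) positive, and — via $\F_i^{\rm imp}(k)^2=1-\tfrac1{15\delta_i^2}k^2+O(k^4)$ — that $(\F_i^{\rm imp})'(0)=0$ and $(\F_i^{\rm imp})''(0)=-\tfrac1{15\delta_i^2}$; boundedness of $(\F_i^{\rm imp})''$ on all of $\RR$ then follows from the large-$k$ asymptotics below together with continuity. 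For the decay, as $r\to+\infty$ we have $\tanh r\to1$, so $\F_i^{\rm imp}(k)^2=\tfrac3r+O(r^{-2})\sim 3\delta_i/k$, hence $\F_i^{\rm imp}(k)\sim\sqrt{3\delta_i}\,|k|^{-1/2}$; choosing $K_{\F_i}$ large enough (using that the bounded continuous function $|k|^{1/2}\F_i^{\rm imp}(k)$ has a finite supremum) yields~\eqref{def-Ki-sigma} with $\varsigma=1/2$.

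The main obstacle is sub-additivity of $g(k)=|k|\F_i^{\rm imp}(k)$, i.e.\ of $g(k)^2=\tfrac{3|k|}{\delta_i^{-1}|k|\tanh(\delta_i^{-1}|k|)}-\tfrac{3|k|}{\delta_i^{-2}|k|^2}=3\delta_i\big(\coth(\delta_i^{-1}|k|)-\delta_i^{-1}|k|^{-1}\big)\cdot|k|$ — wait, more simply $g(k)^2=|k|^2\F_i^{\rm imp}(k)^2$. I would again aim to prove $g$ is non-negative, even, vanishes at $0$, and is concave on $(0,\infty)$, then invoke the concavity-implies-sub-additivity lemma. Concavity of $g$ on $(0,\infty)$ is the delicate analytic estimate: writing $G(r)=g^2=\tfrac{3r}{\tanh r}-3$ (after the substitution $r=\delta_i^{-1}k$ and absorbing constants, so that $g(k)^2=\delta_i^2\,G(\delta_i^{-1}k)\cdot$const — I would track the constants carefully), one has $G(r)=3r\coth r-3$, which is smooth, positive, increasing, with $G(0)=0$; I would show $g=\sqrt{G}$ is concave by verifying $2G G''\leq (G')^2$, i.e.\ $(\sqrt G)''\le0$, using the explicit derivatives of $r\coth r$ and monotonicity/convexity properties of $\coth$ and $\operatorname{csch}^2$, reducing to a one-variable inequality that can be checked by series expansion near $0$ and by the monotone large-$r$ behavior. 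An alternative, if the direct concavity computation is unpleasant, is to prove sub-additivity of $g^2$ directly (which also implies sub-additivity of $g$ since $g\ge0$) by exploiting that $r\mapsto r\coth r-1$ is the even, increasing function whose derivative I can bound; but I expect the cleanest route is the concavity argument. Throughout, the only genuinely new work beyond bookkeeping is this scalar concavity/sub-additivity estimate for the $\tanh$-type symbol; the $\F^{\rm id}$ and $\F^{\rm reg}$ cases and all the $C^2$/decay bookkeeping are routine.
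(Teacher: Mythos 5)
Your proposal is correct, but it takes a genuinely different route on the only non-trivial point, the sub-additivity of $k\mapsto|k|\F_i(k)$. The paper does not use concavity: it observes that the two monotonicity properties ($\F_i$ non-increasing and $k\mapsto k\F_i(k)$ non-decreasing on $\RR^+$) already imply sub-additivity, mixed signs included, and then for $\F_i^{\rm imp}$ it only has to check that $\F_i^{\rm imp}$ is non-increasing, which it reduces to the sign of $f(k)=-k^2-\frac12 k\sinh(2k)+\cosh(2k)-1$, settled by $f(0)=f'(0)=f''(0)=0$ and $f'''(k)=2\cosh(2k)(\tanh(2k)-2k)<0$. You instead invoke the classical fact that a non-negative even function vanishing at $0$ and concave on $(0,\infty)$ is sub-additive (note that non-negativity plus concavity forces monotonicity on $[0,\infty)$, which is what takes care of the mixed-sign case you pass over quickly). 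This asks for more than the paper establishes — concavity of $|k|\F_i^{\rm imp}(k)$ rather than monotonicity of its two factors — but it is true and closes about as cleanly: with $G(r)=r\coth r-1$, so that $|k|^2\F_i^{\rm imp}(k)^2=3\delta_i^2\,G(\delta_i^{-1}k)$, one has the identity $G''(r)=2G(r)/\sinh^2 r$, and the criterion $2GG''\le (G')^2$ becomes $\phi(r)\eqdef 2r\cosh r-2\sinh r-\sinh r\cosh r+r\le 0$, which follows from $\phi(0)=0$ and $\phi'(r)=2\sinh r\,(r-\sinh r)\le 0$; be aware that a series expansion near $0$ plus large-$r$ asymptotics alone, as you suggest, would not prove the inequality on all of $(0,\infty)$ — you need this monotonicity step. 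One small repair for $\F_i^{\rm reg}$: your parenthetical justification is wrong, since $g^2=k^2/(1+\theta_i k^2)$ is \emph{not} concave (its second derivative changes sign at $\theta_i k^2=1/3$); the conclusion nevertheless stands because directly $g''(k)=-3\theta_i k(1+\theta_i k^2)^{-5/2}\le 0$ on $\RR^+$. The bookkeeping for conditions i–ii and the decay rates $\varsigma=0,1,1/2$ with $K_{\F_i}=1,\theta_i^{-1/2},\sup_k|k|^{1/2}\F_i^{\rm imp}(k)$ is fine (the paper leaves these implicit). In short: the paper's monotonicity criterion is lighter to verify; your concavity criterion proves slightly more and is an equally legitimate path to the same admissibility statement.
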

\begin{proof}
A sufficient condition for $\F_i^\mu$ to be admissible is to satisfy, in addition to i. and ii.,
\begin{itemize}
\item[iii'.] $k\mapsto k\F_i(k)$ is non-decreasing on $\RR^+$ and $k\mapsto \F_i(k)$ is non-increasing on $\RR^+$.
\end{itemize} 
Indeed, for $k+l\geq k\geq l\geq 0$, that $\F_i(k)$ is non-increasing on $\RR^+$yields $0\leq \F_i(k+l)\leq \F_i(k)\leq \F_i(l)$, and therefore $(k+l)\F_i(k+l)\leq k \F_i(k)+ l\F_i(k)\leq k \F_i(k)+ l\F_i(l)$. This shows $k\in\RR^+\mapsto k \F_i(k)=|k| \F_i(k)$ is sub-additive. Since $\F_i$ is even, one shows in the same way that $k\in\RR^-\mapsto |k| \F_i(k)$ is sub-additive. 
Finally, for $k\leq 0\leq l$, that $\F_i$ is even and $k\in\RR^+\mapsto k\F_i$ is non-decreasing yields $|k+l|\F_i(k+l)=|k+l|\F_i(|k+l|)\leq (|k|+|l|)\F_i(|k|+|l|) $, and the sub-additivity in $\RR^+$ allows to conclude.

That property iii' holds is immediate for the first two examples, only the last one requires clarifications.
One easily checks that $k\in\RR^+\mapsto k\F_i^{\rm imp}(k)$ is non-decreasing, so that we focus on the proof that $k\mapsto \F_i^{\rm imp}(k)$ is non-increasing for $k\in\RR^+$. To this aim, it suffices to show that  
\[\forall k>0, \quad f(k)\eqdef -k^2-\frac12 k\sinh(2k)+\cosh(2k)-1<0.\] This follows from $f(0)=f'(0)=f''(0)=0$ and
\[f'''(k)=2\sinh(2k)-4k\cosh(2k)=2\cosh(2k)(\tanh(2k)-2k)< 0 \quad (k>0).\]
Thus property iii' holds, and $k\mapsto |k|\F_i^{\rm imp}(k)$ is sub-additive. 
\end{proof}

For reasons explained in Appendix~\ref{S.proof}, our energy space involves both space and time derivatives of the unknowns. For $U=(\zeta,w)^\top$, we define
\[
E^0(U)\eqdef \norm{\zeta}_{\X^0}^2+\norm{w}_{\Y^0}^2, \quad E^N(U)\eqdef \sum_{|\alpha|=0}^NE^0(\partial^\alpha U) \eqdef \norm{\zeta}_{\X^N}^2+\norm{w}_{\Y^N}^2,
\]
where $N$ always denotes an integer and $\alpha$ a multi-index. The functional setting and in particular the definitions of functional spaces $\X^N$ and $\Y^N$ are given in Appendix~\ref{S.functional}.

In addition to $\gamma,\mu,\epsilon,\delta,\Bo^{-1}\geq 0$, it is convenient to introduce the dimensionless parameters
\begin{equation} \label{def-Upsilon}
\Upsilon_\F \eqdef \epsilon^2\big (1+(\gamma K_{\F_1}+K_{\F_2})(\mu\Bo)^{1-\varsigma}\big) <\infty
\end{equation}
where $\varsigma$, $K_{\F_1}$, $K_{\F_2}$ are specified in Definition~\ref{D.admissible},~\eqref{def-Ki-sigma}; and
\[
\m\eqdef \max\{\epsilon,\gamma,\delta,\delta^{-1},\mu,\Bo^{-1}\} <\infty.
\]

\begin{Theorem}[Well-posedness]\label{T.WP}
Let $U^0\eqdef(\zeta^0,w^0)^\top\in \X^N\times \Y^N$ with $N\geq 4$, satisfying
\begin{align*}
h_1^0\eqdef 1-\epsilon\zeta^0\geq h_0>0,\qquad h_2^0\eqdef\delta^{-1}+\epsilon\zeta^0\geq h_0>0 ; \\
 (\gamma+\delta)-\epsilon^2\max_{x\in\RR}\left\{((h_2^0)^{-3} +\gamma (h_1^0)^{-3}) |w^0|^2\right\}\geq k_0>0.
\end{align*}
One can define $K=C(\m ,h_0^{-1},k_0^{-1},\epsilon \norm{\zeta}_{H_x^{{3}}})$ such that if $\Upsilon_\F \norm{w^0}_{\Z^1}^2 \leq K^{-1}$, there exists $T>0$ and a unique $U\eqdef (\zeta,w)^\top\in C^0_{\rm w}([0,T);\X^N\times\Y^N)$ solution to~\eqref{GN-w} and $U\id{t=0}=U^0$. Moreover, there exists ${\bf C}_0=C(\m,h_0^{-1},k_0^{-1},K,E^N(U^0))$ such that
\[ T^{-1}\leq {\bf C}_0 \times \Big(\epsilon+\Upsilon_\F^{1/2} \norm{w^0}_{\Z^1}+\Upsilon_\F \norm{w^0}_{\Z^2}^2 \Big)
\quad \text{ and }\quad \sup_{t\in [0,T)} E^N(U)\ \leq {\bf C}_0 \times E^N(U^0).\]
\end{Theorem}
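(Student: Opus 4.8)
The plan is to prove Theorem~\ref{T.WP} by the classical energy method for quasilinear systems, adapted to the non-local structure introduced by the Fourier multipliers $\F_i^\mu$. First I would recast system~\eqref{GN-w} as a quasilinear evolution equation for $U=(\zeta,w)^\top$ of the schematic form $\mathfrak{S}[U]\partial_t U + \mathfrak{A}[U]\partial_x U = \text{(lower order)}$, where $\mathfrak{S}[U]$ contains the operator $\frac{h_1+\gamma h_2}{h_1h_2} + \mu\Q^\F[\epsilon\zeta]$ acting on $w$ (which, by the discussion in Section~\ref{S.Hamiltonian}, is $\A^\F[\epsilon\zeta]$ up to conjugation and is symmetric coercive under the non-cavitation hypothesis $h_1,h_2\geq h_0>0$), and $\mathfrak{A}[U]$ encodes the coupling and the flux terms. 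The surface tension term $\frac{\gamma+\delta}{\Bo}\partial_x^2(\cdots)$ must be treated as part of the principal symmetric structure (it contributes a positive contribution $\sim\frac{\gamma+\delta}{\Bo}|\partial_x\zeta|^2$ to the energy), while the hyperbolicity of the $\zeta$-equation block relies on the coefficient $(\gamma+\delta)-\epsilon^2((h_2)^{-3}+\gamma(h_1)^{-3})|w|^2$ staying bounded below by $k_0/2>0$, which is exactly the second smallness assumption on the data propagated in time.

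The core of the argument is an a priori energy estimate in the space $E^N(U)$ defined via $\X^N,\Y^N$. I would differentiate the system $\alpha$ times for $|\alpha|\leq N$, commute $\partial^\alpha$ through the coefficients, and pair against the natural symmetrizer to obtain an inequality of the form $\frac{\dd}{\dd t}E^N(U) \leq C(\m,h_0^{-1},k_0^{-1},\ldots)\big(\epsilon + \Upsilon_\F^{1/2}\|w\|_{\Z^1} + \Upsilon_\F\|w\|_{\Z^2}^2\big)E^N(U)$. The delicate point is that the commutators $[\partial^\alpha,\Q^\F[\epsilon\zeta]]$ and the nonlinear remainder $\R^\F$ involve products with the non-local operators $\sqrt\mu\,\partial_x\F_i^\mu$; here the admissibility of $\F_i$ (evenness, $\F_i(0)=1$, $\F_i'(0)=0$, bounded second derivative, sub-additivity of $|k|\F_i(k)$, and the decay bound $\F_i(k)\leq K_{\F_i}|k|^{-\varsigma}$) is precisely what is needed to control these terms in the product estimates of Appendix~\ref{S.functional}/Section~\ref{S.product}, and to make the dangerous contributions appear with the prefactor $\Upsilon_\F = \epsilon^2(1+(\gamma K_{\F_1}+K_{\F_2})(\mu\Bo)^{1-\varsigma})$. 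The condition $\Upsilon_\F\|w^0\|_{\Z^1}^2 \leq K^{-1}$ is what guarantees that the symmetrizer stays coercive (i.e. that no Kelvin-Helmholtz-type loss of positivity occurs at the linearized level — matching the naive criterion $\overline a^\F(k)>0$ of Section~\ref{S.KH-GN}), and hence that the energy estimate closes on a time interval $T$ with $T^{-1}$ bounded as stated.

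Having the a priori estimate, I would construct the solution by a standard iterative/regularization scheme: mollify the equations (e.g. via a Friedrichs mollifier or by adding a parabolic regularization $-\nu\partial_x^2$), solve the regularized problems on a uniform time interval using the same energy estimate (which is uniform in the regularization parameter thanks to its structure), and pass to the limit. Compactness in the strong topology together with the uniform $E^N$ bound gives $U\in L^\infty([0,T);\X^N\times\Y^N)$; weak continuity in time, $U\in C^0_{\rm w}([0,T);\X^N\times\Y^N)$, follows from the equation and interpolation, and improving to strong continuity would require a Bona–Smith argument which the statement does not claim. Uniqueness follows by a standard energy estimate on the difference of two solutions at one lower regularity level ($N-1\geq 3$ suffices for the product estimates), again using coercivity of $\A^\F[\epsilon\zeta]$.

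The main obstacle I expect is the careful bookkeeping of the non-local terms: one must show that all the top-order contributions from $\mu\Q^\F$ and from the commutators either (a) are symmetric and combine into a coercive part of the energy, or (b) carry the factor $\Upsilon_\F$ (and hence are controllably small), rather than producing an uncontrolled derivative loss. In particular, the term $\sqrt\mu\,\partial_x\F_i^\mu$ is only bounded on $L^2$ uniformly in $\mu$ when $\varsigma=1$ (the regularized choice); for $\varsigma<1$ one genuinely needs the surface tension term $\mu\Bo$ factor hidden inside $\Upsilon_\F$, and the interplay $(\mu\Bo)^{1-\varsigma}$ between the order of the non-local operator and the regularizing power of surface tension is the technical heart of the estimate. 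Handling the quadratic-in-$w$ remainder $\R^\F$ and the pressure/flux nonlinearities is comparatively routine given the functional framework of the appendices, so I would defer those to the detailed proof (Appendix~\ref{S.proof}) and concentrate the exposition on the symmetric structure and the role of $\Upsilon_\F$.
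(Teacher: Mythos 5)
There is a genuine gap, and it is exactly the point the paper flags as the reason a standard proof does not work. Your plan is the classical quasilinear scheme: differentiate in $x$, commute $\partial^\alpha$ through the coefficients, symmetrize, and absorb the commutators. The paper argues (beginning of Appendix~\ref{S.proof}) that this cannot close here, for two concrete reasons: (i) the two unknowns are measured in \emph{different} spaces, $\zeta\in\X^0$ and $w\in\Y^0$, and the embedding $\X^0\hookrightarrow\Y^0$ of Lemma~\ref{L.embedding} costs a factor $1+(\gamma K_{\F_1}+K_{\F_2})(\mu\Bo)^{1-\varsigma}$ which is not uniformly bounded in the regime of interest; (ii) the most singular operator of the system is the surface-tension term, which is \emph{off-diagonal} in the quasilinearized system (it couples $\zeta^{(\alpha)}$ into the $w$-equation), so the commutator terms it generates at top order cannot be estimated as remainders in $(\Y^0)^\star$ with uniform constants --- precisely the failure of ``commutator estimates to control all coupling terms'' discussed in the paper and in the reference of Saut--Wang--Xu. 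Your proposal treats the surface tension as ``part of the principal symmetric structure'' and defers the bookkeeping, but the decisive difficulty is not the symmetric diagonal contribution $\frac{\gamma+\delta}{\Bo}|\partial_x\zeta^{(\alpha)}|^2$ (which is indeed harmless); it is the top-order commutator $\partial_x^2\big((1+\mu\epsilon^2|\partial_x\zeta|^2)^{-5/2}\mu\epsilon^2(\partial_x\zeta)(\partial_x\partial^{\e_j}\zeta)\,\partial^{\alpha-\e_j}\partial_x\zeta\big)$ arising when $|\alpha|=N$, which carries $N+2$ derivatives of $\zeta$ against only the $\Bo^{-1/2}$-weighted extra derivative available in $\X^N$.

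The paper's actual route, following Rousset--Tzvetkov and Lannes, is a \emph{space-time} energy method, and this is the missing idea in your proposal: the norms $\X^N,\Y^N$ in the statement contain time derivatives as well as space derivatives, the quasilinearization (Lemma~\ref{L.linearize}) is performed with respect to space-time multi-indices $\alpha=(\alpha_1,\alpha_2)$ so that only $L^2$-type estimates on the resulting block systems are needed, and the dangerous surface-tension commutator just described is \emph{not} put in the remainder but kept in the block as the extra operator $\check{\mathfrak a}_\alpha[\epsilon\zeta]\zeta^{\langle\check\alpha\rangle}$, which is then absorbed by working with the modified energy $\frac12(\mathfrak a\dot\zeta,\dot\zeta)_{L^2}+(\dot\zeta,\check{\mathfrak a}_\alpha\check\zeta)_{L^2}+\frac12(\mathfrak b\dot w,\dot w)_{L^2}$ in Lemma~\ref{L.energy-estimate}. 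This choice also forces an extra step absent from your plan: since the data are prescribed only at $t=0$, the time derivatives entering $E^N(U^0)$ must be defined recursively through the equation itself (using the invertibility of $\mathfrak b$, Lemma~\ref{L.invertible}), as noted in the remark following Theorem~\ref{T.WP}. The remaining ingredients of your proposal --- coercivity of the symmetrizer under the smallness condition $\Upsilon_\F\norm{w^0}_{\Z^1}^2\leq K^{-1}$, the role of admissibility and of $(\mu\Bo)^{1-\varsigma}$, the $J_\nu$-type regularization, the iteration scheme, and uniqueness via an estimate on differences at lower order --- do match the paper, but without the space-time quasilinearization and the $\check{\mathfrak a}_\alpha$ device the top-order estimate you announce would not close.
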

The proof of this result is postponed to Appendix~\ref{S.proof}. Let us for now discuss a few implications.
\begin{Remark}[Initial data]
Since our functional spaces involve time derivatives, it is not {\em a priori} clear how to define $\norm{\zeta^0}_{\X^N}$ and $\norm{w^0}_{\Y^N}$. As it is manifest from the proof, the definition of $(\partial^\alpha U^0)\id{t=0}$ for sufficiently regular $\zeta^0(x),w^0(x)$ is given by system~\eqref{GN-w} itself. More precisely, for $\alpha=(0,\alpha_2)$, then the definition is clear. We then define $(\partial^\alpha U^0)\id{t=0}$ for $\alpha=(\alpha_1,\alpha_2)$ with $\alpha_1>0$ by finite induction on $\alpha_1$, through the identities obtained from differentiating~\eqref{GN-w}  $|\alpha_1|-1$ times with respect to time. These identities given in Lemma~\ref{L.linearize}, and are uniquely solved by Lemma~\ref{L.invertible}.
\end{Remark}
\begin{Remark}[Domain of hyperbolicity and time of existence] \label{R.problems}
Hypotheses on the initial data ensure that the flow lies in the ``domain of hyperbolicity'' of the system; see Lemma~\ref{L.hyperbolicity}. They may be seen as the nonlinear version of the stability criterion presented in Section~\ref{S.KH-GN}, as they provide sufficient conditions for Kelvin-Helmholtz instabilities not to appear. However, remark that our ``Kelvin-Helmholtz instability parameter'', $\Upsilon_\F$, is not multiplied by $\gamma$, in contrast with $\Upsilon$ and $\Upsilon_{\rm GN}$ in Section~\ref{S.KH}, as well as the nonlinear criterion on the full Euler system given by Lannes~\cite[(5.1)]{Lannes13}. The latter results imply that the large-frequency Kelvin-Helmholtz instabilities disappear in the limit $\gamma\to0$, so that surface tension is not necessary for the well-posedness of the system when $\gamma=0$. We do not recover such property with our rigorous analysis, although numerical simulations indicate that our models are well-posed when $\gamma=0$ and $\Bo=\infty$, as long as the non-vanishing depth condition is satisfied.

A second setback is that the time of existence involves $\Upsilon_\F^{1/2} \norm{w^0}_{\Z^1}$, and not only $\Upsilon_\F \norm{w^0}_{\Z^2}^2 $. In practice, this means that when $\Upsilon_\F \ll 1$, and in particular when $\Upsilon_\F \leq \epsilon\ll 1$, then the time of existence of our result is significantly smaller than the one in~\cite[Theorem 6]{Lannes}.

However, let us note that our conclusions, in particular with the choice $\F_i=\F_i^{\rm imp}$ where $\varsigma=1/2$, are in complete agreement with aforementioned results in the oceanographic setting of internal waves, where one expects large values of $\epsilon$ and $\gamma\approx 1$.

We believe that the above limitations originate from the choice of unknowns used when quasilinearizing the equation. This was quickly discussed in footnote~\ref{f-unknown} in Section~\ref{S.KH-GN} as for the occurrence of $\gamma$. The restriction on the time of existence originates from estimates~\eqref{r-est} and~\eqref{rdiff-est} in Lemma~\ref{L.linearize}, and more precisely the lack of an analogue of~\cite[Lemma~7]{Lannes13} thanks to which ``good unknowns'' were constructed. We show in Section~\ref{S.SV} how the techniques used in this work, applied to the Saint-Venant system (that is setting $\mu=0$) written with different unknowns, yields sharp results.
\end{Remark}
\begin{Remark}[Regularized systems]
In the case $\varsigma=1$, one sees that Theorem~\ref{T.WP} does not depend on $\Bo$ (through $\Upsilon_\F$). In particular, the results hold true even when surface tension is neglected, \ie $\Bo^{-1}=0$, and we recover in that case the ``quasilinear timescale'' $T^{-1}\lesssim\epsilon$. Our strategy relying on the use of space-time energy is not needed in that case, as classical energy methods can be applied to prove the well-posedness for initial data in Sobolev spaces: $(\zeta^0,w^0)^\top\in H^s\times H^s$, $s>3/2$.
\end{Remark}

We show now that the above well-posedness analysis can be supplemented with consistency and stability results, which together provide the full justification of our models,~\eqref{GN-w}.
Such a program was completed for similar models in the one-layer setting in~\cite{BonaColinLannes05,Alvarez-SamaniegoLannes08,Alvarez-SamaniegoLannes08a} (see \cite{Lannes} for a detailed account). In the two-layer case with rigid upper lid, the consistency of many models were derived in \cite{BonaLannesSaut08}. The consistency result below builds upon \cite{Duchene14} and \cite{DucheneIsrawiTalhouk14}.

\begin{Proposition}[Consistency]\label{P.consistency}
Let $U\eqdef(\zeta,\psi)^\top$ be a solution of the full Euler system~\eqref{fE-DN} such that
such that there exists $C_0,T>0$ with 
\[ \big\Vert \zeta \big\Vert_{L^\infty([0,T);H_x^{s+5})} + \big\Vert \partial_t\zeta \big\Vert_{L^\infty([0,T);H_x^{s+4})} + \big\Vert \partial_x\psi \big\Vert_{L^\infty([0,T);H_x^{s+\frac{11}2})} + \big\Vert \partial_t\partial_x\psi \big\Vert_{L^\infty([0,T);H_x^{s+\frac92})} \leq C_0 ,\]
for given $s\geq t_0+1/2$, $t_0>1/2$. Moreover, assume that there exists $h_{0}>0$ such that
\begin{equation}\label{C.depth-mr}
\forall (t,x)\in [0,T)\times\RR, \qquad h_1(t,x)=1-\epsilon\zeta(t,x)\geq h_0>0,\quad h_2(t,x)=\delta^{-1}+\epsilon\zeta(t,x)\geq h_0>0.
\end{equation}
 Define $w$ by $\partial_x w=-\frac1\mu G^{\mu}[\epsilon\zeta]\psi =-\partial_t\zeta$. Then $(\zeta,w)^\top$ satisfies exactly the first equation of~\eqref{GN-w}, and the second up to a remainder, $r$, bounded as
\[ \big\Vert r \big\Vert_{L^\infty([0,T);H_x^s)} \ \leq \ \mu^2 \times C(\m,h_{0}^{-1},C_0,C_\F). \]
\end{Proposition}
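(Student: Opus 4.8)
The plan is to derive the modified Green-Naghdi equations directly from the full Euler system by means of a shallow-water expansion of the relevant Dirichlet-Neumann operators, tracking all error terms at order $\O(\mu^2)$. The starting point is the exact structure of~\eqref{fE-DN}: the first equation of~\eqref{GN-w} holds exactly by the very definition $\partial_x w=-\frac1\mu G^\mu[\epsilon\zeta]\psi=-\partial_t\zeta$, so the entire task is the second equation. First I would recall the known asymptotic expansions of $\frac1\mu G^\mu[\epsilon\zeta]\psi$ and of $\partial_x H^{\mu,\delta}[\epsilon\zeta]\psi$ in powers of $\mu$, together with the associated layer-averaged velocities $\overline u_i$; these are available from~\cite{Duchene14,DucheneIsrawiTalhouk14} and give, with $\O(\mu^2)$ precision in appropriate Sobolev norms,
\[ \frac1\mu G^\mu\psi = -\partial_x(h_1\overline u_1)+\O(\mu^2), \qquad \partial_x H^{\mu,\delta}\psi-\gamma\partial_x\psi = \A^{\rm id}[\epsilon\zeta](h_2\overline u_2)+\O(\mu^2), \]
and similarly for $\N^{\mu,\delta}$. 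The key algebraic point is that inserting the Fourier multipliers $\F_i^\mu$ into these operators changes them only by $\O(\mu^2)$: since $\F_i$ is admissible with $\F_i(0)=1$, $\F_i'(0)=0$ and bounded second derivative, one has $\F_i^\mu-\mathrm{Id}=\O(\mu)$ as an operator (with a loss of two derivatives), and each occurrence of $\F_i^\mu$ in $\Q_i^\F,\R_i^\F$ is multiplied by an explicit factor of $\mu$, so replacing $\F_i^\mu$ by $\mathrm{Id}$ throughout perturbs the equation only at order $\O(\mu^2)$; this is where Definition~\ref{D.admissible}, and in particular the control of $\F_i''$, is used.

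Next I would carry out the algebraic reduction from the two-velocity formulation to the single scalar unknown $w$. Using the rigid-lid/mass-conservation constraint~\eqref{id-hiui} together with the decay $\overline u_i\to0$ at infinity, one gets $h_1\overline u_1+h_2\overline u_2=0$, hence $w=-h_1\overline u_1=h_2\overline u_2$ and $\overline u_i=(-1)^iw/h_i$. Substituting into the $\O(\mu^2)$ approximations of $\partial_tH^{\mu,\delta}\psi$, of the quadratic terms $|\partial_xH^{\mu,\delta}\psi|^2-\gamma|\partial_x\psi|^2$, and of $\mu\epsilon\partial_x\N^{\mu,\delta}$, and collecting terms, reproduces exactly the left- and right-hand sides of the second equation of~\eqref{GN-w} with $\Q^\F,\R^\F$ as defined in~\eqref{def-QR}; the surface-tension term is untouched since it is already identical in~\eqref{fE-DN} and~\eqref{GN-w}. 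The remainder $r$ is the sum of (a) the $\O(\mu^2)$ errors from the Dirichlet-Neumann expansions, (b) the $\O(\mu^2)$ errors from replacing $\F_i^\mu$ by $\mathrm{Id}$, and (c) the $\O(\mu^2)$ error coming from one time derivative of these expansions (which is why the hypotheses include $\partial_t\zeta$ and $\partial_t\partial_x\psi$ bounds, with the stated loss of derivatives).

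Finally I would assemble the Sobolev estimate for $r$. This is a matter of product and composition estimates in $H^s$: each building block is a product of functions controlled by the assumed bounds on $\zeta,\partial_t\zeta,\partial_x\psi,\partial_t\partial_x\psi$, with the regularity budget $s+5$, $s+\frac{11}{2}$, etc. chosen precisely so that the expansions (which lose a fixed number of derivatives) close at level $H^s$; the non-cavitation condition~\eqref{C.depth-mr} guarantees that $h_i^{-1}$ and the operators $\A_i^\F[h_i]^{-1}$ are well-behaved. The constant depends only on $\m$, $h_0^{-1}$, $C_0$ and $C_\F$ (the latter entering through the admissibility bound on $\F_i''$). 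I expect the main obstacle to be purely bookkeeping: carefully propagating the precise derivative losses through the Dirichlet-Neumann expansions and through the substitution $\overline u_i=(-1)^iw/h_i$ so that everything genuinely closes in $H^s$ under the stated regularity, and verifying that the $\F_i^\mu$-perturbation is uniformly $\O(\mu^2)$ rather than merely $\O(\mu)$ — this hinges on the explicit $\mu$ prefactors in $\Q^\F_i,\R^\F_i$ and on the quantitative form of admissibility. No genuinely new analytic idea is needed beyond what is already in~\cite{Duchene14,DucheneIsrawiTalhouk14}; the contribution here is the observation that the Fourier-multiplier modification is invisible at the order of consistency.
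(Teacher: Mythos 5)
Your proposal is correct in substance and rests on the same key observation as the paper---admissibility (Definition~\ref{D.admissible}) gives $\vert \F_i(\sqrt\mu k)-1\vert\leq\tfrac12 C_\F\,\mu|k|^2$, so the dispersive terms, which already carry an explicit factor $\mu$, are perturbed only at order $\O(\mu^2)$ when $\F_i^\mu$ is inserted---but it takes a heavier route than the paper's proof. The paper does not re-derive anything from the full Euler system: it invokes the already established consistency of the original Green--Naghdi model ($\F_i^{\rm id}\equiv1$), namely \cite[Proposition~2.4]{Duchene14}, and then by a triangle inequality only has to estimate the two difference terms $\mu\,\partial_t\big(\Q^{\F^{\rm id}}[\epsilon\zeta]w-\Q^{\F}[\epsilon\zeta]w\big)$ and $\mu\epsilon\,\partial_x\big(\R^{\F^{\rm id}}[\epsilon\zeta,w]-\R^{\F}[\epsilon\zeta,w]\big)$ in $H^s_x$, using the Taylor bound above together with the algebra property of $H^s_x$ and the composition estimates of Lemma~\ref{L.f/h}. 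Your plan instead reproduces the whole chain (Dirichlet--Neumann expansions, reduction to the scalar unknown $w$, collection of terms), which is precisely the content of the cited proposition; this is viable since those expansions are in the literature you cite, but all of that algebra can be bypassed, and redoing it buys nothing here since the $\F$-dependence enters only through the $\mu$-prefactored terms $\Q^\F,\R^\F$. One step you fold into ``bookkeeping'' that the paper makes explicit and that should not be omitted: the difference terms involve $w$ and $\partial_t w$ themselves, not only $\partial_x w=-\partial_t\zeta$, so one must establish $w\in L^\infty([0,T);H_x^{s+5})$ and $\partial_t w\in L^\infty([0,T);H_x^{s+4})$; this uses $w\in L^\infty([0,T);L^2)$, which follows from the exact identity~\eqref{id-hiui} and the uniform control of the layer-averaged velocities (see \cite[Proposition~4]{DucheneIsrawiTalhouk14}), combined with the assumed bounds on $\partial_t\zeta$ and $\zeta$.
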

\begin{proof}
The Proposition has been stated and proved, in the case of the original Green-Naghdi system, $\F_i^{\rm id}\equiv 1$, in~\cite[Proposition~2.4]{Duchene14}. By triangular inequality, there only remains to estimate
\[ r_\Q\eqdef \mu \norm{\partial_t \big(\Q^{\F^{\rm id}}[\epsilon\zeta] w-\Q^\F[\epsilon\zeta]w\big)}_{H_x^s}\quad \text{ and } \quad 
 r_\R\eqdef \mu \epsilon\norm{\partial_x \big(\R^{\F^{\rm id}}[\epsilon\zeta,w]-\R^\F[\epsilon\zeta,w]\big)}_{H_x^s}.\]
Notice that by Definition~\ref{D.admissible}, one has
\[\big\vert \F_i(\sqrt{\mu}k)-1 \big\vert \ \leq \ \frac12 C_\F\mu |k|^2.\]
It follows, since $H_x^s$ is an algebra for $s>1/2$ and by Lemma~\ref{L.f/h}, that
\begin{align*}r_\Q&\leq \sum_{i=1}^2 \frac\mu3\norm{ \partial_t\big( h_i^{-1}\partial_x \big(\F_i^\mu-{\rm Id}\big)\big\{h_i^3 \partial_x \F_i^\mu\{h_i^{-1}w\}\big\}\big)}_{H_x^s}+\frac\mu3\norm{ \partial_t\big( h_i^{-1}\partial_x \big\{h_i^3 \partial_x \big(\F_i^\mu-{\rm Id}\big) \{h_i^{-1}w\}\big\}\big)}_{H_x^s},\\
&\leq \mu^2C_\F\ C(\m,h_0^{-1},\norm{\partial_t \zeta}_{L^\infty([0,T);H_x^{s+4})},\norm{\partial_t w}_{L^\infty([0,T);H_x^{s+4})}).\end{align*}
Similarly, one has
\[ r_\R\leq \mu^2 C_\F\ C(\m,h_0^{-1},\norm{ \zeta}_{L^\infty([0,T);H_x^{s+5})}, \norm{w}_{L^\infty([0,T);H_x^{s+5})}) .\]
Now, recall that by definition, $\partial_x w=-\partial_t\zeta\in L^\infty([0,T);H_x^{s+4})$, and $w\in L^\infty([0,T);L^2)$ by identity~\eqref{id-hiui} and the uniform control of $\overline{u}_i$; see \eg~\cite[Proposition~4]{DucheneIsrawiTalhouk14}. Thus we control $w\in L^\infty([0,T);H_x^{s+5})$, and similarly $\partial_t w\in L^\infty([0,T);H_x^{s+4})$, and the Proposition follows.
\end{proof}

\begin{Proposition}[Stability]\label{P.stability} 
 Let $N\geq 4$ and $U_i =(\zeta_{i},w_{i})^\top \in L^\infty([0,T);\X^N\times \Y^N)$ solution to~\eqref{GN-w} with remainder terms $(0,r_i)^\top$. Assume $\zeta_i$ satisfy~\eqref{C.depth-mr},\eqref{C.hyp0},\eqref{C.hyperbolicity} with $h_0,k_0,K>0$. Set $2\leq n\leq N-1$ and assume that $\partial^\alpha r_i\in L^1([0,T);(\Y^0)^\star)$ for any $|\alpha|\leq n$.

Then there exists $0<T^\star\leq T$ such that for all $t \in [0,T^\star)$
\[
	E^n(U_1-U_2)^{1/2}\leq {\bf C}_0 E^n(U_1\id{t=0}-U_2\id{t=0})^{1/2}e^{\lambda t}+{\bf C}_0\int^{t}_{0} e^{\epsilon\lambda( t-t')} f_n(t')dt'.
\]
with 
\[ \lambda \ = \ {\bf C}_0 \times \Big(\epsilon+\Upsilon_\F \Norm{w_1}_{L^\infty([0,T];\Z^2)}^2 \Big) ,\ \quad f_n(t)=\sum_{|\alpha|\leq n}\norm{\partial^\alpha r_1-\partial^\alpha r_2}_{(\Y^0)^\star},\]
and 
${\bf C}_0=C(\m,h_0^{-1},k_0^{-1},K,\Norm{U_1}_{L^\infty([0,T];\X^4\times\Y^4)},\Norm{U_2}_{L^\infty([0,T];\X^4\times\Y^4)})$. Moreover, one has
\[ (T^\star)^{-1}\leq {\bf C}_0 (\epsilon +\Upsilon_\F^{1/2} \Norm{w_1}_{L^\infty([0,T];\Z^1)}^2+\Upsilon_\F \Norm{w_1}_{L^\infty([0,T];\Z^2)}^2).\]
\end{Proposition}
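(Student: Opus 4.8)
The plan is to prove the stability estimate by a standard energy method for quasilinear hyperbolic systems, paralleling the \emph{a priori} energy estimate that underlies Theorem~\ref{T.WP}, but now applied to the equation satisfied by the \emph{difference} $V\eqdef U_1-U_2$. First I would subtract the two copies of~\eqref{GN-w}, writing the result schematically as a linear system for $V$ with coefficients depending on $U_1$ (or an intermediate state), forced by $(0,r_1-r_2)^\top$ plus commutator-type terms coming from the differences of the nonlinearities evaluated at $U_1$ versus $U_2$. Concretely this means invoking the quasilinearization machinery of Lemma~\ref{L.linearize}: differentiating the difference equation $|\alpha|\le n$ times yields, for each multi-index $\alpha$ with $|\alpha|\le n$, an equation of the form $\partial_t(\partial^\alpha V) + A[U_1]\,\partial^\alpha V = (\text{remainder})$, where the remainder is controlled in $(\Y^0)^\star$ by $\norm{\partial^\alpha r_1-\partial^\alpha r_2}_{(\Y^0)^\star}$ together with terms that are bounded by $E^n(V)^{1/2}$ times a constant depending only on the $\X^4\times\Y^4$-norms of $U_1,U_2$ (using that $n\le N-1$, so one always keeps at least $\X^4\times\Y^4$ control on the coefficients, and the usual tame product and commutator estimates of Section~\ref{S.product}).

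Next I would construct the natural energy functional $\mathcal E^n(V)$ — the same symmetrizer-weighted quadratic form used in Appendix~\ref{S.proof}, built from the operators $\A^\F[\epsilon\zeta_1]$ and the hyperbolicity of the system — which, under the hypotheses~\eqref{C.depth-mr}, \eqref{C.hyp0}, \eqref{C.hyperbolicity} on $\zeta_1$ (valid with $h_0,k_0,K$), is equivalent to $E^n(V)$ uniformly in the parameters, i.e. $\mathcal E^n(V)\approx E^n(V)$ with constants absorbed into ${\bf C}_0$. Differentiating $\mathcal E^n(V)$ in time and using the difference equations above, the antisymmetric (transport) part of $A[U_1]$ produces only lower-order contributions after integration by parts; the symmetric part is handled exactly as in the proof of Theorem~\ref{T.WP}, yielding a differential inequality of Gronwall type:
\[
\frac{\dd}{\dd t}\mathcal E^n(V) \ \leq \ 2\lambda\, \mathcal E^n(V) + {\bf C}_0\, f_n(t)\,\mathcal E^n(V)^{1/2},
\]
with $\lambda$ of the advertised form $\lambda = {\bf C}_0(\epsilon+\Upsilon_\F\Norm{w_1}_{L^\infty\Z^2}^2)$. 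The appearance of precisely this $\lambda$ — rather than, say, something with an extra $\Upsilon_\F^{1/2}\Norm{w_1}_{\Z^1}$ — comes from the fact that the growth rate in the differential inequality only sees the coefficient of the \emph{top-order} term in $A$, whereas the restriction on the lifespan $T^\star$ (as in Theorem~\ref{T.WP}) comes from requiring the nonlinear energy estimate to close, hence involves the full quantity $\epsilon +\Upsilon_\F^{1/2}\Norm{w_1}_{\Z^1}^2+\Upsilon_\F\Norm{w_1}_{\Z^2}^2$. Integrating the differential inequality (a Gronwall lemma for $\mathcal E^n(V)^{1/2}$) gives
\[
\mathcal E^n(V(t))^{1/2} \ \leq \ \mathcal E^n(V(0))^{1/2} e^{\lambda t} + {\bf C}_0\int_0^t e^{\lambda(t-t')} f_n(t')\,\dd t',
\]
and translating back through $\mathcal E^n\approx E^n$ yields the stated inequality (the factor $e^{\epsilon\lambda(t-t')}$ in the source term being a harmless improvement reflecting that the $r_i$-driven terms only enter at lower order — equivalently one may simply bound it by $e^{\lambda(t-t')}$).

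The main obstacle, as in the proof of Theorem~\ref{T.WP} itself, is ensuring that the coefficient differences $A[U_1]-A[U_2]$ and the difference of the $\Q^\F,\R^\F$ nonlinearities are estimated \emph{tamely}: every such difference must be written so that the highest derivative falls on $V$ (contributing $E^n(V)^{1/2}$) while all remaining factors are bounded using only $\X^4\times\Y^4$-norms of $U_1,U_2$, so that no derivative loss occurs and $n$ can be taken as low as $2$. This requires careful use of the product and commutator estimates involving the Fourier multipliers $\F_i^\mu$ from Section~\ref{S.product} (in particular the sub-additivity property in Definition~\ref{D.admissible}, which is what makes $\sqrt\mu\partial_x\F_i^\mu$ behave like a derivative of controlled order), together with Lemma~\ref{L.invertible} to invert $\A^\F[\epsilon\zeta_1]+\mu\cdots$ and recover $\partial_t V$ from the time-differentiated equations when defining the energy at order $n$. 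Once the tame structure is in place, the Gronwall argument and the definition of $T^\star$ are routine and identical in spirit to Appendix~\ref{S.proof}.
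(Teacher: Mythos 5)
Your proposal is correct and follows essentially the same route as the paper: quasilinearize via Lemma~\ref{L.linearize} (whose estimate~\eqref{rdiff-est} controls the coefficient/remainder differences by $E^{|\alpha|}(U_1-U_2)^{1/2}$ times $\epsilon+\Upsilon_\F^{1/2}\norm{w_1}_{\Z^1}+\Upsilon_\F\norm{w_1}_{\Z^1}^2$), then apply the symmetrizer-based difference energy estimate — which is exactly Lemma~\ref{L.energy-estimate-diff}, rather than a re-derivation — and conclude by Gronwall, absorbing the terms proportional to $E^n(U_1-U_2)^{1/2}$ by restricting $T^\star$ and augmenting ${\bf C}_0$, just as in the paper (using $n\geq 2$, $N\geq 4$ to bound the $\X^2\times\Y^2$ and $W^{3,\infty}$ factors).
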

\begin{proof}
By Lemma~\ref{L.linearize}, for any $|\alpha|\leq n\leq N-1$, $U_i^{(\alpha)}\eqdef (\partial^\alpha \zeta_i,\partial^\alpha w_i)^\top$ satisfies~\eqref{linearized} with remainder terms $\t r^{(\alpha)}_i\eqdef r^{(\alpha)}_i+ \partial^\alpha r_i \in L^1([0,T);(\Y^0)^\star)$, and
\[\norm{\t r^{(\alpha)}_1-\t r^{(\alpha)}_2}_{(\Y^0)^\star} \leq \norm{\partial^\alpha r_1-\partial^\alpha r_2}_{(\Y^0)^\star}\\
+C_0\times (\epsilon +\Upsilon_\F^{1/2} \norm{w_1}_{\Z^1}+\Upsilon_\F \norm{w_1}_{\Z^1}^2)\times E^{|\alpha|}(U_1-U_2)^{1/2},
\]
with $C_0=C(\m,h_0^{-1},E^N(U_1),E^N(U_2))$. By Lemma~\ref{L.energy-estimate-diff}, one has
\[ E^0( U_1^{(\alpha)}- U_2^{(\alpha)})^{1/2} \ \leq \ {\bf C}_0\ E^0(U_1^{(\alpha)}\id{t=0}-U_2^{(\alpha)}\id{t=0})^{1/2}e^{\lambda t} \ + \ {\bf C}_0\int_0^t f^{(\alpha)}(t') e^{\lambda (t-t')}\dd t', \]
with ${\bf C}_0,\lambda$ as in the statement and
\[ f^{(\alpha)}(t)=\norm{\t r_1^{(\alpha)}-\t r_2^{(\alpha)}}_{(\Y^0)^\star}+ \epsilon\norm{U_2}_{(W^{3,\infty}_x)^2} \norm{U_1-U_2}_{\X^2\times\Y^2}.\]
Since $n\geq 2$ and $N\geq 4$, one can restrict $T^\star$ as in the statement and augment ${\bf C}_0$ if necessary so that the estimate holds.
\end{proof}

The following
is now a straightforward consequence of Theorem~\ref{T.WP}, Propositions~\ref{P.consistency} and~\ref{P.stability}.
\begin{Proposition}[Full justification]\label{P.justification}
Let $U^0\equiv(\zeta^0,w^0)^\top\in \X^N\times \Y^N$ with $N$ sufficiently large, and satisfying the hypotheses of Theorem~\ref{T.WP}. Define $\psi^0$ with $\partial_x w^0=-\frac1\mu G^{\mu}[\epsilon\zeta^0]\psi^0$ and assume that $(\zeta^0,\psi^0)^\top$ satisfies the hypotheses of Theorem 5 in~\cite{Lannes13}. Then there exists $C,T>0$ such that
\begin{itemize}
\item There exists a unique solution $U\equiv (\zeta,\psi)^\top$ to the full Euler system~\eqref{fE-DN}, defined on $[0,T]$ and with initial data $(\zeta^0,\psi^0)^\top$ (provided by Theorem 5 in~\cite{Lannes13});
\item There exists a unique solution $U_\F\equiv (\zeta_\F,w_\F)^\top$ to our modified Green-Naghdi model~\eqref{GN-w}, defined on $[0,T]$ and with initial data $(\zeta^0,w^0)^\top$ (provided by Theorem~\ref{T.WP});
\item Defining $\partial_x w=-\frac1\mu G^{\mu}[\epsilon\zeta]\psi =-\partial_t\zeta$, one has for any $t\in[0,T]$,
\[ \Norm{(\zeta,w)-(\zeta_\F,w_\F) }_{L^\infty([0,t];\X^0\times\Y^0)}\leq C \ \mu^2\ t.\]
\end{itemize}
\end{Proposition}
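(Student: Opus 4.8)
The plan is to assemble the three ingredients already proved --- well-posedness (Theorem~\ref{T.WP}), consistency (Proposition~\ref{P.consistency}) and stability (Proposition~\ref{P.stability}) --- exactly along the lines of the ``stability implies convergence'' paradigm of~\cite{Lannes}. First I would invoke Theorem~5 in~\cite{Lannes13} to obtain, for initial data $(\zeta^0,\psi^0)^\top$ satisfying its hypotheses, a unique solution $U\equiv(\zeta,\psi)^\top$ of the full Euler system~\eqref{fE-DN} on some time interval $[0,T_{\rm Euler}]$, together with uniform bounds on the relevant Sobolev norms of $\zeta,\partial_t\zeta,\partial_x\psi,\partial_t\partial_x\psi$; these bounds are precisely what Proposition~\ref{P.consistency} requires with some fixed $s$, provided $N$ was chosen large enough that $s+5$ (and the other indices) are controlled. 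In parallel, Theorem~\ref{T.WP} applied to $U^0=(\zeta^0,w^0)^\top$ --- whose hypotheses are assumed --- yields a unique solution $U_\F=(\zeta_\F,w_\F)^\top\in C^0_{\rm w}([0,T_\F);\X^N\times\Y^N)$ of~\eqref{GN-w}, with a lower bound on $T_\F$ and a uniform energy bound $\sup_{[0,T_\F)}E^N(U_\F)\le \mathbf{C}_0 E^N(U^0)$. Set $T\eqdef\min\{T_{\rm Euler},T_\F\}>0$.

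Next I would produce the comparison function coming from the Euler flow: define $w$ on $[0,T]$ by $\partial_x w=-\frac1\mu G^\mu[\epsilon\zeta]\psi=-\partial_t\zeta$ together with $w\to0$ at infinity (as in Proposition~\ref{P.consistency} and justified via~\eqref{id-hiui} and the uniform control of the $\overline u_i$ from~\cite{DucheneIsrawiTalhouk14}). By Proposition~\ref{P.consistency}, $(\zeta,w)^\top$ solves the first equation of~\eqref{GN-w} exactly and the second up to a remainder $r$ with $\Norm{r}_{L^\infty([0,T);H_x^s)}\le \mu^2\,C(\m,h_0^{-1},C_0,C_\F)$. I would check that the regularity provided by the Euler solution, via the defining relations, places $(\zeta,w)^\top$ in $L^\infty([0,T);\X^N\times\Y^N)$ (here one uses that $\X^N,\Y^N$ are built, per Appendix~\ref{S.functional}, from space-time derivatives that can be re-expressed through the equation, exactly as in the ``Initial data'' remark), and that $\zeta$ satisfies the non-vanishing depth condition~\eqref{C.depth-mr} as well as the hyperbolicity conditions~\eqref{C.hyp0},~\eqref{C.hyperbolicity} --- the latter for $\mu$ (equivalently $\Upsilon_\F\Norm{w}_{\Z^1}^2$) small enough, which is legitimate since consistency lets us regard $(\zeta,w)$ as an $\O(\mu^2)$ perturbation of a flow in the domain of hyperbolicity. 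At $t=0$, the two solutions share the same $\zeta^0$ and, by construction of $\psi^0$ through $\partial_x w^0=-\frac1\mu G^\mu[\epsilon\zeta^0]\psi^0$, the same $w^0$ (up to reconciling the space-time jet at $t=0$, again via the equation), so $E^n(U\id{t=0}-U_\F\id{t=0})=0$ for the relevant $n\le N-1$.

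Then I would apply Proposition~\ref{P.stability} to the pair $U_1=(\zeta,w)^\top$ (solving~\eqref{GN-w} with remainder $(0,r)^\top$) and $U_2=U_\F$ (solving~\eqref{GN-w} with remainder $(0,0)^\top$). With $n$ chosen with $2\le n\le N-1$ and $N$ large, the hypotheses hold on $[0,T]$; the stability estimate gives, for $t$ up to some $T^\star$ with $(T^\star)^{-1}$ bounded as in the statement,
\[
E^n\big((\zeta,w)-(\zeta_\F,w_\F)\big)^{1/2}\le \mathbf{C}_0\,E^n\big(U\id{t=0}-U_\F\id{t=0}\big)^{1/2}e^{\lambda t}+\mathbf{C}_0\int_0^t e^{\epsilon\lambda(t-t')}f_n(t')\,\dd t'.
\]
Since the initial difference vanishes and $f_n(t')=\sum_{|\alpha|\le n}\Norm{\partial^\alpha r}_{(\Y^0)^\star}\lesssim \mu^2 C$ uniformly in $t'$ (using that $(\Y^0)^\star$ is controlled by a Sobolev norm covered by $\Norm{r}_{H_x^s}$ for $s$ large, and that $\lambda,\mathbf{C}_0$ are bounded by constants independent of $\mu$ as $\mu\to0$), one concludes $E^n((\zeta,w)-(\zeta_\F,w_\F))^{1/2}\le C\mu^2 t$ on $[0,\min\{T,T^\star\}]$, hence in particular the $\X^0\times\Y^0$ bound claimed. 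After renaming this final interval as $[0,T]$, the statement follows. The main obstacle I anticipate is the bookkeeping of function spaces: ensuring that the Euler-derived comparison function $(\zeta,w)$ genuinely lands in the space-time energy spaces $\X^N\times\Y^N$ with norms bounded independently of $\mu$, and that its initial space-time jet matches that of $U_\F$ so that the initial-difference term is exactly zero rather than merely $\O(\mu^2)$ --- this requires carefully using the equations to express all time derivatives, tracking how many spatial derivatives of $\zeta^0,\psi^0$ (hence how large $N$) are consumed, and confirming that the hyperbolicity conditions~\eqref{C.hyp0}--\eqref{C.hyperbolicity} are stable under $\O(\mu^2)$ perturbations in the regime where Theorem~\ref{T.WP}'s smallness hypothesis holds.
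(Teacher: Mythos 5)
Your proposal is correct and follows exactly the route the paper intends: the paper gives no detailed proof, stating only that the proposition is a straightforward consequence of Theorem~\ref{T.WP}, Proposition~\ref{P.consistency} and Proposition~\ref{P.stability}, which is precisely the consistency--well-posedness--stability assembly you carry out (with the bookkeeping caveats you rightly flag being the only genuinely delicate points).
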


\section{The Saint-Venant system}\label{S.SV}
The Saint-Venant system (with surface tension) is obtained from our Green-Naghdi models~\eqref{GN-w} by setting $\mu=0$. The results of Section~\ref{S.justification} thus apply as a particular case. However, it is possible to obtain sharper results by considering the system obtained after the following change of variable $\overline{v}\eqdef \frac{h_1+\gamma h_2}{h_1h_2}w=\overline{u}_2-\gamma\overline{u}_1$:
\begin{equation}\label{SV}
\left\{ \begin{array}{l}
\displaystyle\partial_{ t}{\zeta} \ + \ \partial_x \big(H(\epsilon\zeta)\overline{v}\big) \ =\ 0, \\ \\
\partial_{ t} \overline{v} \ + \ (\gamma+\delta)\partial_x{\zeta} \ + \ \frac{\epsilon}{2} \partial_x\Big(H'(\epsilon\zeta)| \overline{v}|^2\Big) \ - \ \frac{\gamma+\delta}{\Bo}\partial_x ^3\zeta \ = \ 0,
\end{array}
\right.
\end{equation}
where we denote $H(\epsilon\zeta)= \frac{h_1 h_2}{h_1+\gamma h_2}$.

In the following, we quickly review the steps of the method developed in Appendix~\ref{S.proof},
providing results without proof.

The analogue of Lemma~\ref{L.linearize} is the following:
\begin{Lemma}\label{L.linearize-SV}
Let $U=(\zeta,\overline{v})^\top\in \X^N\times H^N$ with $N\geq 2$, solution to~\eqref{SV} and satisfying
\begin{equation}\label{C.depth-SV}
h_1(\epsilon\zeta)=1-\epsilon\zeta\geq h_0>0,\qquad h_2(\epsilon\zeta)=\delta^{-1}+\epsilon\zeta\geq h_0>0.
\end{equation}
For any $\alpha=(\alpha_1,\alpha_2)$ such that $|\alpha|\leq N$, denote $U^{(\alpha)}\eqdef (\partial^\alpha \zeta,\partial^\alpha \overline{v})^\top$ and $\overline{v}^{\langle\check\alpha\rangle}\eqdef (\partial^{\alpha-\e_1} \overline{v},\partial^{\alpha-\e_2} \overline{v})^\top$ (if $\alpha_j=0$, then $\partial^{\alpha-\e_j} \overline{v}=0$ by convention). Then $U^{(\alpha)}$ satisfies:
\[
\left\{ \begin{array}{l}
\displaystyle\partial_{ t}{\zeta}^{(\alpha)} \ + \ \partial_x \big(H(\epsilon\zeta)\overline{v}^{(\alpha)} \big) \ + \ \epsilon H'(\epsilon\zeta)\overline{v} \partial_x \zeta^{(\alpha)} \ + \ \partial_x\check{\mathfrak{d}}_{\alpha}[\epsilon\zeta] \overline{v}^{\langle \check\alpha\rangle} \ =\ r_1^{(\alpha)}, \\ \\
\partial_{ t} \overline{v}^{(\alpha)} \ + \ \partial_x\mathfrak{a}_{\rm SV}[\epsilon\zeta,\epsilon \overline{v}]\zeta^{(\alpha)} \ + \ \epsilon H'(\epsilon\zeta)\overline{v} \partial_x \overline{v}^{(\alpha)} \ = \ r_2^{(\alpha)},
\end{array}
\right.
\]
with $\mathfrak{\check d}_{\alpha}[\epsilon\zeta] \overline{v}^{\langle \check\alpha\rangle} \eqdef \sum_{j\in\{1,2\}}\alpha_j H'(\epsilon\zeta) (\epsilon\partial^{\e_j}\zeta)(\partial^{\alpha-\e_j} \overline{v})$ and
\[ \mathfrak{a}_{\rm SV}[\epsilon\zeta,\epsilon \overline{v}]\bullet\eqdef \Big((\gamma+\delta)+\frac{\epsilon^2}{2}H''(\epsilon\zeta) |\overline{v}|^2\Big)\bullet-\frac{\gamma+\delta}\Bo \partial_x^2\bullet ;
\]
and $r^{(\alpha)}[\epsilon\zeta,\epsilon\overline{v}]=(r^{(\alpha)}_1[\epsilon\zeta,\epsilon \overline{v}],r^{(\alpha)}_2[\epsilon\zeta,\epsilon \overline{v}])^\top\in \X^0\times L^2$ satisfies
\begin{align*}
\norm{r^{(\alpha)}[\epsilon\zeta,\epsilon\overline{v}] }_{\X^0\times L^2} & \leq \epsilon\ C(\m,h_0^{-1}, \norm{\zeta}_{\X^N},\norm{\overline{v}}_{H^N}) \times \big(\norm{\zeta}_{\X^N}+\norm{\overline{v}}_{H^N}\big) .
\end{align*}
\end{Lemma}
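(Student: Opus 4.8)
The plan is to prove Lemma~\ref{L.linearize-SV} by differentiating system~\eqref{SV} directly and carefully tracking which terms are "principal" (kept on the left-hand side) and which are lower-order or higher-regularity and thus absorbed into the remainder $r^{(\alpha)}$. First I would treat the case $|\alpha|\leq 1$, where no genuine commutators arise: applying $\partial^\alpha$ to~\eqref{SV} produces exactly the stated form with $r^{(\alpha)}=0$ when $\alpha=0$, and with explicit but harmless terms when $|\alpha|=1$; this fixes the structure of $\mathfrak{\check d}_\alpha$ and $\mathfrak{a}_{\rm SV}$. For the general multi-index $\alpha$ with $|\alpha|\leq N$, I would apply $\partial^\alpha$ to each equation of~\eqref{SV} and use the Leibniz rule on the quasilinear terms $\partial_x(H(\epsilon\zeta)\overline{v})$, $\tfrac{\epsilon}{2}\partial_x(H'(\epsilon\zeta)|\overline{v}|^2)$. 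The point is to extract, from each product, the three pieces where all derivatives fall on a single factor ($H$, $\overline{v}$, or $\zeta$), which give respectively the $H(\epsilon\zeta)\partial_x\overline{v}^{(\alpha)}$ contribution, the transport term $\epsilon H'(\epsilon\zeta)\overline{v}\,\partial_x(\cdot)^{(\alpha)}$, and the term $\partial_x\mathfrak{a}_{\rm SV}\zeta^{(\alpha)}$; the single distinguished "half-weight" term where exactly one derivative is peeled off onto $\epsilon\partial^{\e_j}\zeta$ is collected into $\partial_x\check{\mathfrak{d}}_\alpha[\epsilon\zeta]\overline{v}^{\langle\check\alpha\rangle}$, and every remaining term in the Leibniz expansion — all of which distribute at least two derivatives onto the $\zeta$ factors and hence can be estimated in $L^2$ (or $\X^0$) without loss of regularity relative to $\norm{\zeta}_{\X^N}+\norm{\overline{v}}_{H^N}$ — goes into $r^{(\alpha)}$.

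Next I would carry out the remainder estimate. The key analytic inputs are that $H^s$ is an algebra for $s>1/2$, the standard commutator/product (Kato--Ponce type) and Moser-type tame estimates, and the fact that $H(\epsilon\zeta)$, $H'(\epsilon\zeta)$, $H''(\epsilon\zeta)$ are smooth functions of $\epsilon\zeta$ bounded along with their derivatives by $C(\m,h_0^{-1})$ under the non-cavitation hypothesis~\eqref{C.depth-SV} (via a composition lemma of the type of Lemma~\ref{L.f/h}). Every term placed in $r^{(\alpha)}$ carries an explicit prefactor $\epsilon$, because the only way a term escapes the principal part is by having at least one derivative land on $\epsilon\zeta$ or $\epsilon\overline{v}$ beyond what the principal terms already account for; combining this with the tame product estimates yields precisely the bound $\norm{r^{(\alpha)}}_{\X^0\times L^2}\leq \epsilon\,C(\m,h_0^{-1},\norm{\zeta}_{\X^N},\norm{\overline{v}}_{H^N})(\norm{\zeta}_{\X^N}+\norm{\overline{v}}_{H^N})$. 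One should double-check that the $\X^0$ norm (which, per Appendix~\ref{S.functional}, incorporates the $\Bo^{-1}$-weighted $\partial_x\zeta$ contribution coming from the surface-tension term $\tfrac{\gamma+\delta}{\Bo}\partial_x^3\zeta$) is controlled: differentiating $\partial_x^3\zeta$ by $\partial^\alpha$ gives $\partial_x^3\zeta^{(\alpha)}$, which is exactly the $\partial_x^2$ term inside $\mathfrak{a}_{\rm SV}$ acting on $\zeta^{(\alpha)}$ and is therefore principal, not a remainder — so no regularity is lost here.

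The main obstacle, though a mild one, is bookkeeping: one must verify that the distinguished "$\check{\mathfrak{d}}_\alpha$" term is correctly identified as the unique term with exactly one derivative on a factor $\epsilon\partial^{\e_j}\zeta$ and the remaining $\alpha-\e_j$ derivatives on $\overline{v}$, and in particular that its coefficient is the combinatorial factor $\alpha_j$ (which is what the multinomial coefficient collapses to at that order) — this is where an off-by-one or a missing symmetrization would break the later energy estimate, since this term sits at the borderline of the energy space. A secondary point is that, because the functional spaces $\X^N$, $\Y^N$ involve time derivatives (cf.\ the Remark on initial data), the multi-index $\alpha=(\alpha_1,\alpha_2)$ mixes $\partial_t$ and $\partial_x$; one should note that $\partial_t$ applied to~\eqref{SV} can be traded for $\partial_x$ using the equations themselves, so the structure is unchanged and the remainder estimates go through verbatim with the $\X^N\times H^N$ norms in place of pure spatial Sobolev norms. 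With these two points handled, the lemma follows by assembling the Leibniz expansion and invoking the tame estimates term by term.
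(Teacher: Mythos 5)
Your proposal is correct and follows essentially the route the paper intends: the paper states Lemma~\ref{L.linearize-SV} without proof, as the direct analogue of Lemma~\ref{L.linearize}, whose proof is exactly your scheme (apply $\partial^\alpha$, Leibniz, keep the principal transport and $\mathfrak{a}_{\rm SV}$ terms, isolate the single-derivative-on-$\zeta$ terms with coefficient $\alpha_j$ as $\check{\mathfrak{d}}_\alpha$, and estimate the rest term by term with composition and product estimates, the $\varepsilon$ prefactor coming from any derivative hitting $H(\epsilon\zeta)$ or the $\tfrac\epsilon2$-flux). Only cosmetic quibbles: the blanket claim that every remainder term puts at least two derivatives on $\zeta$ is not literally true (e.g.\ terms like $\epsilon^2 H''(\epsilon\zeta)(\partial^{\e_j}\zeta)\,\overline v\,\partial^{\alpha-\e_j}\overline v$ in the second equation), and the $\X^0$ requirement, including its $\Bo^{-1/2}$-weighted extra $\partial_x$, concerns $r_1^{(\alpha)}$ rather than the surface-tension equation, but both points are handled by the same case-by-case pairing you invoke and do not affect the argument.
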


One has immediately the following analogue of Lemma~\ref{L.hyperbolicity}.
\begin{Lemma}\label{L.hyperbolicity-SV} Let $(\zeta, \overline{v})^\top\in L^\infty\times L^\infty$ be such that $\epsilon\zeta$ satisfies~\eqref{C.depth-SV} with $h_0>0$, and
\begin{equation}\label{C.hyp-SV} (\gamma+\delta)+\frac{\epsilon^2}{2} H''(\epsilon\zeta)|\overline{v}|^2=(\gamma+\delta)-\gamma \epsilon^2 \frac{(h_1+h_2)^2}{(h_1+\gamma h_2)^3}|\overline{v}|^2\geq k_0>0.\end{equation}
 Then there exists $K_0,K_1=C(\m,h_0^{-1},k_0^{-1},\epsilon \norm{\zeta}_{L^\infty})$ such that
 \begin{align*}
 \forall f,g \in \X^0,& \qquad &\norm{\big\langle\mathfrak a_{\rm SV}[\epsilon\zeta,\epsilon \overline{v}] f,g\big\rangle_{(\X^0)^\star}}&\leq K_1 \norm{f}_{\X^0}\norm{g}_{\X^0},\\
 \forall f,g \in L^2,& \qquad &\norm{\big(H(\epsilon\zeta) f,g\big)_{L^2}}&\leq K_1\norm{f}_{L^2}\norm{g}_{L^2},\\
 \forall f \in \X^0,& \qquad &\big\langle\mathfrak a_{\rm SV}[\epsilon\zeta,\epsilon \overline{v}] f,f\big\rangle_{(\X^0)^\star}&\geq \frac1{K_0}\norm{f}_{\X^0}^2,\\
 \forall f \in L^2,& \qquad &\big(H(\epsilon\zeta) f,f\big)_{L^2}&\geq \frac1{K_0}\norm{f}_{L^2}^2.
 \end{align*}
 \end{Lemma}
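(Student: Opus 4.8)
The plan is to treat the two operators separately, since the estimates for $H(\epsilon\zeta)$ (multiplication by a scalar function) are elementary and the only genuine content lies in $\mathfrak a_{\rm SV}[\epsilon\zeta,\epsilon\overline v]$. First I would record that, under \eqref{C.depth-SV} with $h_0>0$, the function $H(\epsilon\zeta)=\frac{h_1h_2}{h_1+\gamma h_2}$ is bounded above and below by positive constants depending only on $\m$, $h_0^{-1}$ and $\epsilon\norm{\zeta}_{L^\infty}$: the numerator is controlled by $(1+\delta^{-1})^2$ and bounded below by $h_0^2$, while the denominator lies between $h_0\min\{1,\gamma\}$ and $(1+\gamma)(1+\delta^{-1})$. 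This immediately gives the continuity bound $|(H(\epsilon\zeta)f,g)_{L^2}|\le \norm{H(\epsilon\zeta)}_{L^\infty}\norm f_{L^2}\norm g_{L^2}$ by Cauchy--Schwarz, and the coercivity bound $(H(\epsilon\zeta)f,f)_{L^2}\ge (\essinf H(\epsilon\zeta))\norm f_{L^2}^2$; both constants absorb into $K_1$ and $K_0$.

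For $\mathfrak a_{\rm SV}$ I would split it as $\mathfrak a_{\rm SV}[\epsilon\zeta,\epsilon\overline v]\bullet = \kappa[\epsilon\zeta,\epsilon\overline v]\bullet -\tfrac{\gamma+\delta}{\Bo}\partial_x^2\bullet$, where $\kappa[\epsilon\zeta,\epsilon\overline v]=(\gamma+\delta)+\tfrac{\epsilon^2}{2}H''(\epsilon\zeta)|\overline v|^2$ is a scalar function. Integrating by parts, $\langle \mathfrak a_{\rm SV}f,g\rangle_{(\X^0)^\star}=\int_\RR \kappa fg + \tfrac{\gamma+\delta}{\Bo}\int_\RR (\partial_x f)(\partial_x g)$, so continuity follows from $\norm\kappa_{L^\infty}\le(\gamma+\delta)+\tfrac{\epsilon^2}{2}\norm{H''(\epsilon\zeta)}_{L^\infty}\norm{\overline v}_{L^\infty}^2$ together with Cauchy--Schwarz applied to both terms, with the $\X^0$-norm controlling $\norm f_{L^2}$ and (when $\Bo^{-1}>0$) $\norm{\partial_x f}_{L^2}$. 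Here one must make sure $H''(\epsilon\zeta)$ is bounded under \eqref{C.depth-SV}; this is the computation $H''(\epsilon\zeta)=-2\gamma\tfrac{(h_1+h_2)^2}{(h_1+\gamma h_2)^3}$ recorded in the statement, which is bounded by a constant depending only on $\m$ and $h_0^{-1}$ since $h_1+h_2=1+\delta^{-1}$ is constant. For coercivity, hypothesis \eqref{C.hyp-SV} gives $\kappa\ge k_0>0$ pointwise, hence $\langle\mathfrak a_{\rm SV}f,f\rangle_{(\X^0)^\star}\ge k_0\norm f_{L^2}^2+\tfrac{\gamma+\delta}{\Bo}\norm{\partial_x f}_{L^2}^2\ge \tfrac1{K_0}\norm f_{\X^0}^2$, the last step being precisely the statement that the $\X^0$-norm is equivalent (uniformly on compact parameter sets) to $(\norm\cdot_{L^2}^2+\tfrac{\gamma+\delta}{\Bo}\norm{\partial_x\cdot}_{L^2}^2)^{1/2}$; this is the definition of $\X^0$ given in Appendix~\ref{S.functional}, so no real work is needed.

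The only mild subtlety — and the place I would be most careful — is the uniformity of constants when $\Bo^{-1}\to0$, i.e.\ ensuring the estimates degenerate gracefully into pure $L^2$ estimates rather than blowing up, and conversely keeping track that $K_0,K_1$ depend only on the advertised quantities $\m,h_0^{-1},k_0^{-1},\epsilon\norm\zeta_{L^\infty}$ and not on $\overline v$ separately (the $|\overline v|^2$ in $\kappa$ is already absorbed because \eqref{C.hyp-SV} is assumed, which simultaneously bounds $\kappa$ from below by $k_0$ and, combined with $\kappa\le\gamma+\delta$ trivially from $H''\le0$, from above). Since $H''(\epsilon\zeta)\le 0$, one in fact has $k_0\le\kappa\le\gamma+\delta$, so $\norm\kappa_{L^\infty}\le\gamma+\delta\le\m$ with no dependence on $\overline v$ at all — this makes the continuity constant clean. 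With these observations the lemma follows by routine manipulations, exactly paralleling the proof of Lemma~\ref{L.hyperbolicity} with the Fourier-multiplier terms $\Q^\F,\R^\F$ switched off.
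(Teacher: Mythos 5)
Your proof is correct and takes essentially the route the paper intends: the lemma is stated there without proof as the immediate analogue of Lemma~\ref{L.hyperbolicity}, and your argument is exactly that proof specialized to $\mu=0$ (pointwise bounds on the scalar coefficient $\kappa=(\gamma+\delta)+\tfrac{\epsilon^2}{2}H''(\epsilon\zeta)|\overline v|^2$ and on $H(\epsilon\zeta)$, integration by parts on the capillary term $-\tfrac{\gamma+\delta}{\Bo}\partial_x^2$, equivalently a density argument from $\S(\RR)$, and hypothesis~\eqref{C.hyp-SV} for coercivity, with $\gamma+\delta\geq\delta\geq\m^{-1}$ handling the $\Bo^{-1}|\partial_x f|_{L^2}^2$ part of the $\X^0$-norm). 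One small correction: to bound $H(\epsilon\zeta)$ from above use $h_1+\gamma h_2\geq h_1\geq h_0$ rather than your lower bound $h_0\min\{1,\gamma\}$ on the denominator, which would introduce a spurious $\gamma^{-1}$ into $K_1$ not permitted by the advertised dependence $C(\m,h_0^{-1},k_0^{-1},\epsilon\norm{\zeta}_{L^\infty})$ and would degenerate precisely in the limit $\gamma\to0$ that the paper emphasizes after Theorem~\ref{T.WP-SV}.
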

 
 A priori energy estimates are obtained by adding the $L^2$ inner product of the first equation with $\mathfrak{a}_{\rm SV}\zeta^{(\alpha)}$, and the one of second one with $H(\epsilon\zeta)\overline{v}^{(\alpha)}+\check{\mathfrak{d}}_{\alpha}[\epsilon\zeta] \overline{v}^{\langle \check\alpha\rangle} $, and following the proof of Lemmata~\ref{L.energy-estimate} and~\ref{L.energy-estimate-diff}.
Applying the strategy of Section~\ref{S.WP-conclusion}, one then obtains the following analogue of Theorem~\ref{T.WP}.
\begin{Theorem}\label{T.WP-SV}
Let $N\geq 2$ and $U^0\eqdef(\zeta^0,\overline{v}^0)^\top\in \X^N\times H^N$, satisfying~\eqref{C.depth-SV},\eqref{C.hyp-SV} with $h_0,k_0>0$. Then there exists $T>0$ and a unique solution $U\eqdef (\zeta,\overline{v})^\top\in C^0_{\rm w}([0,T);\X^N\times H^N)$ satisfying~\eqref{SV}. Moreover, there exists ${\bf C}_0=C(\m,h_0^{-1},k_0^{-1},\norm{U^0}_{\X^N\times H^N})$ such that
\[ T^{-1}\leq {\bf C}_0 \times\epsilon
\quad \text{ and }\quad \sup_{t\in [0,T)} \left( \norm{\zeta}_{\X^N}+\norm{\overline{v}}_{H^N}\right) \ \leq\ {\bf C}_0\left(\norm{\zeta^0}_{\X^N}+\norm{\overline{v}^0}_{H^N}\right).\]
\end{Theorem}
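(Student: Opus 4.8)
The plan is to run the classical Friedrichs-type scheme for quasilinear symmetrizable systems, exactly mirroring the proof of Theorem~\ref{T.WP} given in Appendix~\ref{S.proof} but in the substantially simpler setting $\mu=0$, where the dispersive operators $\Q^\F,\R^\F$ disappear and the only non-trivial Fourier multiplier remaining is the surface-tension term $-\frac{\gamma+\delta}{\Bo}\partial_x^3\zeta$. First I would fix the initial data $U^0=(\zeta^0,\overline v^0)^\top\in\X^N\times H^N$ satisfying the non-cavitation bound~\eqref{C.depth-SV} and the hyperbolicity bound~\eqref{C.hyp-SV}, and note that these two conditions persist (with slightly degraded constants $h_0/2$, $k_0/2$) on a short time interval for any continuous-in-time approximate solution staying close to $U^0$ in, say, $\X^2\times H^2\hookrightarrow (W^{1,\infty})^2$, since $N\geq2$ gives the needed embedding. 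This is what will ultimately localize the time of existence.

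Next I would set up the iterative scheme: define $U^{k+1}$ from $U^k$ by solving the linear system obtained from Lemma~\ref{L.linearize-SV} with $\alpha=(0,0)$, i.e. transport-type terms with frozen coefficients $H(\epsilon\zeta^k)$, $\epsilon H'(\epsilon\zeta^k)\overline v^k$, together with the coercive operator $\mathfrak a_{\rm SV}[\epsilon\zeta^k,\epsilon\overline v^k]$; each such linear problem is well-posed in $\X^N\times H^N$ by a standard Galerkin or semigroup argument because $\mathfrak a_{\rm SV}$ and $H(\epsilon\zeta^k)$ are symmetric and, by Lemma~\ref{L.hyperbolicity-SV}, uniformly coercive and bounded. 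The a priori energy estimate is the heart of the matter: for each multi-index $\alpha$ with $|\alpha|\leq N$ one pairs the first equation of the linearized system with $\mathfrak a_{\rm SV}[\epsilon\zeta,\epsilon\overline v]\zeta^{(\alpha)}$ and the second with $H(\epsilon\zeta)\overline v^{(\alpha)}+\check{\mathfrak d}_\alpha[\epsilon\zeta]\overline v^{\langle\check\alpha\rangle}$ in $L^2$, adds, and uses the symmetry of $\mathfrak a_{\rm SV}$ and $H$ so that the top-order terms $\partial_x(H\overline v^{(\alpha)})$ and $\partial_x\mathfrak a_{\rm SV}\zeta^{(\alpha)}$ cancel after integration by parts; what remains, including the commutator $\check{\mathfrak d}_\alpha$ contributions, the time derivatives falling on $\mathfrak a_{\rm SV}$ and $H$, and the remainder $r^{(\alpha)}$ controlled by Lemma~\ref{L.linearize-SV}, is all of order $\epsilon$ times the natural energy $\norm{\zeta}_{\X^N}^2+\norm{\overline v}_{H^N}^2$ (here one crucially uses $N\geq 2$ again, since the remainder estimate in Lemma~\ref{L.linearize-SV} asks for $\X^N\times H^N$ control of the coefficients, and $W^{1,\infty}$ control of $\overline v$ for the transport terms). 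A Gronwall argument then gives, on a time interval of length $\gtrsim\epsilon^{-1}$, a uniform bound on $\sup_{t}(\norm{\zeta^k}_{\X^N}+\norm{\overline v^k}_{H^N})$ in terms of the initial data, and these estimates are what make the scheme stay inside the region where~\eqref{C.depth-SV}--\eqref{C.hyp-SV} hold.

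With uniform bounds in hand, I would prove convergence of the scheme in the weaker norm $\X^0\times L^2$: the differences $U^{k+1}-U^k$ satisfy a linearized system of the same type with an extra source that is quadratic in the differences of the coefficients and hence controlled, by the analogue of Lemma~\ref{L.energy-estimate-diff}, by $\epsilon\,C_0\,\norm{U^k-U^{k-1}}_{\X^0\times L^2}$ on the short time interval; this yields a contraction and thus a Cauchy sequence, whose limit $U$ is a solution in $C^0_{\rm w}([0,T);\X^N\times H^N)$ by the usual interpolation-plus-weak-continuity argument (strong convergence in low norm, uniform bound in high norm, $U\in L^\infty\cap C^0_{\rm w}$, continuity of each $\X^N$, $H^N$ norm in time). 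Uniqueness follows from the same $\X^0\times L^2$ stability estimate applied to two solutions with the same data. Finally the quantitative bounds $T^{-1}\leq{\bf C}_0\epsilon$ and $\sup_t(\norm\zeta_{\X^N}+\norm{\overline v}_{H^N})\leq{\bf C}_0(\norm{\zeta^0}_{\X^N}+\norm{\overline v^0}_{H^N})$ are read off directly from the energy estimate and the lifespan coming from the persistence of~\eqref{C.depth-SV}--\eqref{C.hyp-SV}.

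The main obstacle, as in the $\mu>0$ case, is organizing the top-order energy estimate so that the dangerous terms genuinely cancel: one must choose the symmetrizer $(\mathfrak a_{\rm SV},H)$ on the two components precisely as dictated by Lemma~\ref{L.hyperbolicity-SV} and be careful with the non-local surface-tension piece $-\frac{\gamma+\delta}{\Bo}\partial_x^2$ inside $\mathfrak a_{\rm SV}$, which raises the effective order of the first equation by two and is exactly why the natural energy for $\zeta$ is the $\X^N$ norm rather than $H^N$ — the $\X^N$ norm is designed so that $\langle\mathfrak a_{\rm SV}f,f\rangle_{(\X^0)^\star}$ is coercive. Everything else ($N\geq 2$ instead of $N\geq4$, no $\Upsilon_\F$ smallness condition, $\gamma$-uniformity, the clean timescale $\epsilon^{-1}$) is precisely the simplification afforded by $\mu=0$ and the good choice of unknown $\overline v=\overline u_2-\gamma\overline u_1$, as anticipated in Remark~\ref{R.problems}; no genuinely new difficulty arises beyond carefully transcribing the Appendix~\ref{S.proof} argument.
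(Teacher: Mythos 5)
Your proposal is correct and follows essentially the same route as the paper, which proves Theorem~\ref{T.WP-SV} precisely by transcribing the Appendix~\ref{S.proof} machinery to the $\mu=0$ setting: quasilinearization via Lemma~\ref{L.linearize-SV}, the symmetrizer of Lemma~\ref{L.hyperbolicity-SV} used exactly through the pairing you describe (first equation against $\mathfrak a_{\rm SV}[\epsilon\zeta,\epsilon\overline v]\zeta^{(\alpha)}$, second against $H(\epsilon\zeta)\overline v^{(\alpha)}+\check{\mathfrak d}_\alpha[\epsilon\zeta]\overline v^{\langle\check\alpha\rangle}$), then the energy estimates and the iteration/convergence/uniqueness strategy of Lemmata~\ref{L.energy-estimate}--\ref{L.energy-estimate-diff} and Section~\ref{S.WP-conclusion}. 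The simplifications you point out ($N\geq2$, no $\Upsilon_\F$ condition, timescale $\epsilon^{-1}$) are exactly what the paper claims for this case.
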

\begin{Remark}
Theorem~\ref{T.WP-SV} is valid uniformly with respect to the parameter $\Bo^{-1}$, and the result holds in particular in the case without surface tension: $\Bo^{-1}=0$. This case is however straightforward as the Saint-Venant system is then a quasilinear system, and the result was stated in particular in~\cite{GuyenneLannesSaut10}. Assumption~\eqref{C.hyp-SV} corresponds exactly to the hyperbolicity condition provided therein.

Notice that~\eqref{C.hyp-SV} is automatically satisfied in the limit $\gamma\to0$: Kelvin-Helmholtz instabilities disappear in the water-wave setting, and~\eqref{C.depth-SV} suffices to ensure the stability of the flow.
\end{Remark}
\begin{Remark}
While completing our work, we have been informed of related results covering the water-wave setting ($\gamma=0,\delta=1$) of our Theorem~\ref{T.WP-SV}. Saut, Wang and Xu in \cite{SautWangXu} generalize the result to Boussinesq systems in dimension $d=2$, while Chiron and Benzoni-Gavage \cite{ChironBenzoni-Gavage} treat more general Euler-Korteweg systems.
\end{Remark}

\appendix

\section{Notations and functional setting}\label{S.functional}

The notation $a\lesssim b$ means that 
 $a\leq C_0\ b$, where $C_0$ is a nonnegative constant whose exact expression is of no importance.
 We denote by $C(\lambda_1, \lambda_2,\dots)$ a nonnegative constant depending on the parameters
 $\lambda_1$, $\lambda_2$,\dots and whose dependence on the $\lambda_j$ is always assumed to be nondecreasing.

In this paper, we sometimes work with norms involving derivatives in both space and time variables. We find it convenient to use the following sometimes non-standard notations.
\begin{itemize}
\item For $1\leq p<\infty$, we denote $L^p_x=L^p=L^p(\RR)$ the standard Lebesgue spaces associated with the norm
\[\vert f \vert_{L^p}=\left(\int_{\RR}\vert f(x)\vert^p dx\right)^{\frac1p}<\infty.\] 
The real inner product of any functions $f_1$ and $f_2$ in the Hilbert space $L^2(\RR)$ is denoted by
\[
 \big(f_1,f_2\big)_{L^2}=\int_{\RR}f_1(x)f_2(x) \dd x.
 \]
 The space $L^\infty=L^\infty_x=L^\infty(\RR)$ consists of all essentially bounded, Lebesgue-measurable functions
 $f$ with the norm
\[
\norm{f}_{L^\infty}= {\rm ess\,sup}_{x\in\RR} | f(x)|<\infty.
\]
\item We acknowledge the fact that only space derivatives are involved by the use of a subscript.
For $k\in\NN$, we denote by $W^{k,\infty}_x(\RR)=\{f, \mbox{ s.t. } \forall 0\leq j\leq k,\ \partial_x^j f\in L^{\infty}(\RR)\}$ endowed with its canonical norm.\\
For any real constant $s\in\RR$, $H^s_x=H^s_x(\RR)$ denotes the Sobolev space of all tempered distributions $f$ with the norm $\vert f\vert_{H^s_x}=\vert \Lambda^s f\vert_{L^2} < \infty$, where $\Lambda$ is the pseudo-differential operator $\Lambda=(1-\partial_x^2)^{1/2}$. We denote $H^\infty_x=\cap_{N\in\NN} H^N_x$.
\item In absence of subscript, the derivatives are with respect to space and time, and thus apply to functions defined on $ (t,x)\in[0,T)\times \RR$. Thus for $N\in\NN$, $W^{N,\infty}$ is the space of functions endowed with the following norm:
\[ \norm{f}_{W^{N,\infty}} \ = \ \sum_{|\alpha|\leq N} \norm{\partial^\alpha f}_{L^\infty_x},\]
where we use the standard multi-index notation: $\alpha\in \NN^2$, $\partial^{(\alpha_1,\alpha_2)}=\partial_t^{\alpha_1}\partial_x^{\alpha_2}$ and $|\alpha|=\alpha_1+\alpha_2$. In particular, $\partial^{\e_1}=\partial^{(1,0)}=\partial_t$ and $\partial^{\e_2}=\partial^{(0,1)}=\partial_x$. 

Similarly, $H^{N}$ is the space of functions endowed with
\[ \norm{f}_{H^{N}}^2 \ = \ \sum_{|\alpha|\leq N} \norm{\partial^\alpha f}_{L^2_x}^2 .\]
We denote $H^\infty=\cap_{N\in\NN} H^N$.
\item Given $\mu,\gamma ,\Bo^{-1}\geq 0$ and $\F_i$ ($i=1,2$) admissible functions (in the sense of Definition~\ref{D.admissible}), we define $\X^0$, $\Y^0$, $\W^0$, $\Z^0$ as the completion of the Schwartz space, $\S(\RR)$, for the following norms:
\begin{align*}
\norm{f}_{\X^0}^2 &\eqdef \norm{f}_{L^2}^2+\frac1\Bo \norm{\partial_x f}_{L^2}^2,& \quad
\norm{f}_{\Y^0}^2 &\eqdef \norm{f}_{L^2}^2+\mu\gamma\norm{\partial_x\F^\mu_1 f}_{L^2}^2+\mu\norm{\partial_x\F^\mu_2 f}_{L^2}^2,\\
\norm{f}_{\W^0} &\eqdef \norm{\widehat{f}}_{L^1}+\frac1\Bo \norm{\widehat{\partial_x f}}_{L^1},&\quad
\norm{f}_{\Z^0} &\eqdef \norm{\widehat{f}}_{L^1}+\sqrt{\mu\gamma}\norm{\widehat{\partial_x\F^\mu_1 f}}_{L^1}+\sqrt{\mu}\norm{\widehat{\partial_x\F^\mu_2 f}}_{L^1}.
\end{align*}
For $N\in\NN$ we define consistently with above the norms controlling space and time derivatives:
\begin{align*}
\norm{f}_{\X^N}^2 &\eqdef \sum_{|\alpha|\leq N} \norm{\partial^\alpha f}_{\X^0}^2, &\quad 
\norm{f}_{\Y^N}^2 &\eqdef \sum_{|\alpha|\leq N} \norm{\partial^\alpha f}_{\Y^0}^2, \\
\norm{f}_{\W^N} &\eqdef \sum_{|\alpha|\leq N} \norm{\partial^\alpha f}_{\W^0}, &\quad 
\norm{f}_{\Z^N} &\eqdef \sum_{|\alpha|\leq N} \norm{\partial^\alpha f}_{\Z^0}.
\end{align*}

\item Denoting $X$ any of the previously defined functional spaces, we denote by $X^{\star}$ its topological dual, endowed with the norm $\norm{\varphi}_{X^\star}=\sup( |\varphi(f)|,\ \norm{f}_{X}\leq 1)$; and by $\langle\cdot,\cdot\rangle_{(X)^{\star}}$ the $(X^{\star}-X)$ duality brackets. 
\item For any function $u=u(t,x)$ defined on $ [0,T)\times \RR$ with $T>0$, and any of the previously defined functional spaces, $X$, we denote $L^\infty([0,T);X)$ the space of functions such that $u(t,\cdot)$ is controlled in $X$, uniformly for $t\in[0,T)$, and denote the associated norm
 \[\Norm{u}_{L^\infty([0,T);X)} \ = \ \esssup_{t\in[0,T)}\norm{u(t,\cdot)}_{X} \ < \ \infty.\]
For $k\in\NN$, $C^k([0,T);X)$ denotes the space of $X$-valued continuous functions on $[0,T)$ with continuous derivatives up to the order $k$. Finally, $C^0_{\rm w}([0,T);X)$ is the space of continuous functions with values in $X$, given the weak topology.
\end{itemize}

\section{Proof of the Theorem \ref{T.WP}}\label{S.proof}

This section is dedicated to the proof of our main result, Theorem \ref{T.WP}. The proof relies on energy estimates which are also used in the proof of Proposition~\ref{P.stability}.

Our strategy is similar to the one used for the full Euler system with surface tension by Lannes~\cite{Lannes,Lannes13}, and originates from an idea of Rousset and Tzvetkov~\cite{RoussetTzvetkov10,RoussetTzvetkov11}.
The main difference with respect to the traditional methods for quasilinear systems is that we treat time derivatives in the same way as space derivatives. In particular, the main tool of the analysis is the control of a {\em space-time} energy. The reason for such a strategy is that 
\begin{itemize}
\item the two unknowns, $\zeta$ and $w$, are controlled in different functional spaces, one being continuously embedded in the other but to the price of a non-uniform constant (see Lemma~\ref{L.embedding}), and the inclusion being strict;
\item the most singular term of the system, namely the one which involves the operator of highest order, comes from the surface tension component, and couples the two unknowns (it appears as an off-diagonal component of the quasilinearized system).
\end{itemize}
This is why one cannot use standard energy methods in Sobolev-based functional spaces, as commutator estimates fail to control all coupling terms; see also the discussion in~\cite{SautWangXu}.

More precisely, our strategy is as follows. In Lemma~\ref{L.linearize} below, we ``quasilinearize'' the system. We differentiate several times the equations with respect to space and time, and extract the leading order components. The quasilinear system we consider is the complete system of all the equations satisfied by the original unknowns and their space-time derivatives up to sufficiently high order. Thus only $L^2$-type estimates on the aforementioned linear ``block'' systems will be required. In Section~\ref{S.preliminary}, we study the operators involved in the block systems, and exhibit sufficient conditions for the existence of a coercive symmetrizer of the system in Lemma~\ref{L.hyperbolicity}. This yields a priori energy estimates in Section~\ref{S.energy}. Finally, in Section~\ref{S.WP-conclusion}, we explain how to deduce from these energy estimates the well-posedness of the linear block systems (Lemma~\ref{L.WP-linear}), and in turn the well-posedness of the nonlinear system (Theorem~\ref{T.WP}).

\subsection{Technical results}\label{S.product}
In this section, we provide tools (injections and product estimates) similar to the classical ones concerning Sobolev spaces, for the functional spaces $\X^N,\Y^N,\Z^N$, as defined in Appendix~\ref{S.functional}. 

Let us fix $\mu,\gamma ,\Bo^{-1}\geq 0$ and $\F_i$ ($i=1,2$) admissible functions (in the sense of Definition~\ref{D.admissible}). In particular there exists $K_{\F_i}>0$ and $\varsigma\in[0,1]$ such that
\begin{equation}\label{C.Fi}
|\F_i(\xi)|^2\leq \min\big\{1,K_{\F_i}|\xi|^{-2\varsigma}\big\}.
\end{equation}

The following standard injections will be frequently used, sometimes without notice:
\begin{equation}\label{injections} 
\norm{f}_{L^\infty} \leq \norm{\widehat f}_{L^1}\lesssim \norm{f}_{H_x^{t_0}} \ (t_0>1/2); \quad \text{thus}\quad \norm{f}_{\Z^N}\lesssim \norm{f}_{\Y^{N+1}} \text{ and }\norm{f}_{\Z^N}\lesssim \norm{f}_{H^{N+t_0+1}}.
\end{equation}

One immediately sees that the space $\X^0$ is continuously embedded in $\Y^0$; the following Lemma precises the norm of the inclusion map.
\begin{Lemma}\label{L.embedding}
If $\F_i$ satisfies~\eqref{C.Fi}, then
\[
\forall f\in \X^0(\RR), \qquad \norm{f}_{\Y^0}^2\leq \left(1+(\gamma K_{\F_1}+K_{\F_2}) (\mu\Bo )^{1-\varsigma} \right) \norm{f}_{\X^0}^2.
\]
\end{Lemma}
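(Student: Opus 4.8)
The plan is to work directly on the Fourier side, where all the norms defining $\X^0$ and $\Y^0$ become weighted $L^2$ norms of $\widehat f$. First I would write, for $f\in\S(\RR)$ (and then pass to the completion),
\[
\norm{f}_{\Y^0}^2=\int_\RR\Big(1+\mu\gamma\,\xi^2|\F_1(\sqrt\mu\xi)|^2+\mu\,\xi^2|\F_2(\sqrt\mu\xi)|^2\Big)|\widehat f(\xi)|^2\,\dd\xi,
\]
and similarly $\norm{f}_{\X^0}^2=\int_\RR\big(1+\Bo^{-1}\xi^2\big)|\widehat f(\xi)|^2\,\dd\xi$. It then suffices to establish the pointwise multiplier inequality
\[
1+\mu\gamma\,\xi^2|\F_1(\sqrt\mu\xi)|^2+\mu\,\xi^2|\F_2(\sqrt\mu\xi)|^2\ \leq\ \Big(1+(\gamma K_{\F_1}+K_{\F_2})(\mu\Bo)^{1-\varsigma}\Big)\big(1+\Bo^{-1}\xi^2\big)
\]
for all $\xi\in\RR$, after which integrating against $|\widehat f|^2$ gives the claim.

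To prove the pointwise bound, I would use the decay estimate~\eqref{C.Fi}: $|\F_i(\sqrt\mu\xi)|^2\leq K_{\F_i}|\sqrt\mu\xi|^{-2\varsigma}=K_{\F_i}\mu^{-\varsigma}|\xi|^{-2\varsigma}$. Hence
\[
\mu\gamma\,\xi^2|\F_1(\sqrt\mu\xi)|^2+\mu\,\xi^2|\F_2(\sqrt\mu\xi)|^2\ \leq\ (\gamma K_{\F_1}+K_{\F_2})\,\mu^{1-\varsigma}\,\xi^{2-2\varsigma}
=(\gamma K_{\F_1}+K_{\F_2})\,\mu^{1-\varsigma}\,(\xi^2)^{1-\varsigma}.
\]
Now I would bound $(\xi^2)^{1-\varsigma}$ by comparing with $1+\Bo^{-1}\xi^2$: for $\varsigma\in[0,1]$ and any $t\geq0$ one has $t^{1-\varsigma}\leq (1+t)^{1-\varsigma}\leq 1+t$, but to get the factor $\Bo^{1-\varsigma}$ I apply this to $t=\Bo^{-1}\xi^2$, giving $(\Bo^{-1}\xi^2)^{1-\varsigma}\leq 1+\Bo^{-1}\xi^2$, i.e. $(\xi^2)^{1-\varsigma}\leq \Bo^{1-\varsigma}\big(1+\Bo^{-1}\xi^2\big)$. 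Combining, the $\mu$-dependent terms are bounded by $(\gamma K_{\F_1}+K_{\F_2})(\mu\Bo)^{1-\varsigma}\big(1+\Bo^{-1}\xi^2\big)$, and adding the trivial $1\leq 1+\Bo^{-1}\xi^2$ yields exactly the desired multiplier inequality.

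There is no serious obstacle here; the only mild subtlety is the degenerate cases $\Bo=\infty$ (i.e. $\Bo^{-1}=0$) and $\varsigma=1$, where one should check the inequality $(\xi^2)^{1-\varsigma}\leq\Bo^{1-\varsigma}(1+\Bo^{-1}\xi^2)$ still holds: when $\varsigma=1$ the left side is the constant $1$ and the right side is $1+\Bo^{-1}\xi^2\geq1$, so it is fine for any $\Bo\in(0,\infty]$; when $\varsigma<1$ and $\Bo=\infty$ the factor $(\mu\Bo)^{1-\varsigma}$ is infinite, which is consistent with the fact that $\X^0$ is then just $L^2$ and does not embed in $\Y^0$ uniformly — so the statement is vacuous in that regime and one implicitly assumes $\Bo<\infty$ whenever $\varsigma<1$. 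I would note this and otherwise treat the computation as routine, passing from $\S(\RR)$ to $\X^0$ by density at the end.
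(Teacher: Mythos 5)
Your proof is correct and follows essentially the same route as the paper: Plancherel, the decay bound $|\F_i(\sqrt\mu\xi)|^2\leq K_{\F_i}\mu^{-\varsigma}|\xi|^{-2\varsigma}$, and an elementary comparison of $|\xi|^{2-2\varsigma}$ with $\Bo^{1-\varsigma}(1+\Bo^{-1}|\xi|^2)$. The only cosmetic difference is that the paper gets this last step from Young's inequality $\Bo^{\varsigma}|\xi|^{2-2\varsigma}\leq \varsigma\Bo+(1-\varsigma)|\xi|^2$ rather than your bound $t^{1-\varsigma}\leq 1+t$, which amounts to the same thing.
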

\begin{proof}The inequality is a simple consequence of Parseval's identity and Young's inequality:
\[\mu\norm{\partial_x\F_i(\sqrt\mu D)f}_{L^2}^2\leq K_{\F_i} \mu^{1-\varsigma}\int |\xi|^{2-2\varsigma}|\widehat f(\xi)|^2\ \dd\xi\leq K_{\F_i}(\mu\Bo)^{1-\varsigma} \int (1+\frac1\Bo|\xi|^2)|\widehat f(\xi)|^2\ \dd\xi,\]
where we used $\Bo^{\varsigma}|\xi|^{2-2\varsigma}\leq \varsigma \Bo+ (1-\varsigma) |\xi|^2$. 
\end{proof}

Sobolev spaces $H^N_x$ and $W^{N,\infty}_x$ enjoy straightforward product estimates, which are immediately extended to $\X^N$ and $\W^N$:
\[
\norm{fg}_{\X^0}\lesssim \norm{g}_{\W^0}\norm{f}_{\X^0} \lesssim \norm{g}_{\X^1}\norm{f}_{\X^0} \ ,\qquad \norm{fg}_{\W^0} \lesssim \norm{f}_{\W^0} \norm{g}_{\W^0} ;
\]
and therefore 
\[ \forall N\geq 1, \quad \norm{fg}_{\X^N}\lesssim \norm{ f}_{\X^N}\norm{g}_{\X^N}.\]

The following Lemma shows that spaces $\Y^N$ and $\Z^N$ enjoys similar estimates, thanks to the sub-additivity property of admissible functions (recall Definition~\ref{D.admissible}).
\begin{Lemma}\label{L.produit} Let $\F_i:\RR\to\RR^+$ $(i=1,2)$ be admissible functions.
Then for any $1\leq p,q,\t p,\t q, r\leq \infty$ satisfying $1+\frac1r=\frac1p+\frac1q=\frac1{\t p}+\frac1{\t q}$, one has
\begin{equation}\label{prod-pqr}
\norm{\widehat{\partial_x\F_i^\mu\{fg\}}}_{L^r}\leq \norm{\widehat f}_{L^p} \norm{\widehat{\partial_x \F^\mu_i g}}_{L^q} +\norm{\widehat g}_{L^{\t p}} \norm{\widehat{\partial_x \F^\mu_i f}}_{L^{\t q}}.
 \end{equation}
It follows in particular:
\begin{align}
\label{prod-Y-0} \norm{fg}_{\Y^0}&\lesssim \norm{g}_{\Z^0}\norm{f}_{\Y^0} \lesssim \norm{g}_{\Y^1}\norm{f}_{\Y^0} \ ,\\
\forall N\geq 1, \quad \label{prod-Y-N} \norm{fg}_{\Y^N}&\lesssim \norm{ f}_{\Y^N}\norm{g}_{\Y^N}\ , \\
\label{prod-Z-0} \norm{fg}_{\Z^0} &\lesssim \norm{f}_{\Z^0} \norm{g}_{\Z^0} .
\end{align}
\end{Lemma}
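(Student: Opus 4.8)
The plan is to prove the Fourier-side product estimate \eqref{prod-pqr} first, and then deduce \eqref{prod-Y-0}--\eqref{prod-Z-0} as consequences by specializing the exponents $p,q,\t p,\t q,r$ and unwinding the definitions of the norms $\norm{\cdot}_{\Y^0}$, $\norm{\cdot}_{\Z^0}$ from Appendix~\ref{S.functional}. The starting point for \eqref{prod-pqr} is the elementary identity on the Fourier side
\[
\widehat{\partial_x\F_i^\mu\{fg\}}(\xi) \ = \ i\xi\,\F_i(\sqrt\mu\,\xi)\,\big(\widehat f \ast \widehat g\big)(\xi) \ = \ \int_\RR i\xi\,\F_i(\sqrt\mu\,\xi)\,\widehat f(\xi-\eta)\,\widehat g(\eta)\,\dd\eta .
\]
The key algebraic step is to bound the multiplier $|\xi|\,\F_i(\sqrt\mu\,\xi)$ by splitting $\xi = (\xi-\eta) + \eta$ and invoking the sub-additivity property (iii) of Definition~\ref{D.admissible} applied to $k=\sqrt\mu(\xi-\eta)$, $l=\sqrt\mu\,\eta$, which after dividing by $\sqrt\mu$ gives
\[
|\xi|\,\F_i(\sqrt\mu\,\xi) \ \leq \ |\xi-\eta|\,\F_i(\sqrt\mu(\xi-\eta)) \ + \ |\eta|\,\F_i(\sqrt\mu\,\eta).
\]
Inserting this into the convolution integral splits it into two pieces: in the first, the factor $|\xi-\eta|\,\F_i(\sqrt\mu(\xi-\eta))$ attaches to $\widehat f$, producing $\widehat{\partial_x\F_i^\mu f}$ up to the harmless constant; in the second it attaches to $\widehat g$, producing $\widehat{\partial_x\F_i^\mu g}$. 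Each piece is then a convolution of two nonnegative functions, and Young's convolution inequality with exponents $1+\frac1r=\frac1p+\frac1q$ (respectively $\frac1{\t p}+\frac1{\t q}$) yields exactly \eqref{prod-pqr}.

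For the consequences: \eqref{prod-Z-0} follows by taking $p=q=\t p=\t q=1$, $r=1$ in \eqref{prod-pqr} together with the trivial bound $\norm{\widehat{fg}}_{L^1}=\norm{\widehat f\ast\widehat g}_{L^1}\leq\norm{\widehat f}_{L^1}\norm{\widehat g}_{L^1}$ and the analogous bound with a bare $\F_i^\mu$ in place of $\partial_x\F_i^\mu$ (which is even easier since $0\le\F_i\le1$); summing the three contributions that define $\norm{\cdot}_{\Z^0}$ and using $\norm{\widehat f}_{L^1}\le\norm{f}_{\Z^0}$ gives the result. For \eqref{prod-Y-0}, one works on the $L^2$ side: write $\norm{fg}_{\Y^0}^2$ as a sum of $\norm{fg}_{L^2}^2$ and the terms $\mu\gamma\norm{\partial_x\F_1^\mu\{fg\}}_{L^2}^2$, $\mu\norm{\partial_x\F_2^\mu\{fg\}}_{L^2}^2$; estimate $\norm{fg}_{L^2}\le\norm{\widehat g}_{L^1}\norm{f}_{L^2}$ by Young, and estimate $\norm{\widehat{\partial_x\F_i^\mu\{fg\}}}_{L^2}$ via \eqref{prod-pqr} with $(p,q)=(1,2)$ and $(\t p,\t q)=(1,2)$, i.e.\ by $\norm{\widehat f}_{L^1}\norm{\widehat{\partial_x\F_i^\mu g}}_{L^2}+\norm{\widehat g}_{L^1}\norm{\widehat{\partial_x\F_i^\mu f}}_{L^2}$. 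Collecting terms and recognizing the $\Y^0$ and $\Z^0$ norm pieces gives $\norm{fg}_{\Y^0}\lesssim\norm{f}_{\Y^0}\norm{g}_{\Z^0}$, and the second inequality in \eqref{prod-Y-0} is then the injection $\norm{g}_{\Z^0}\lesssim\norm{g}_{\Y^1}$ from \eqref{injections}. Finally \eqref{prod-Y-N} follows by the Leibniz rule: $\partial^\alpha(fg)=\sum_{\beta\le\alpha}\binom{\alpha}{\beta}\partial^\beta f\,\partial^{\alpha-\beta}g$, and each summand is estimated by \eqref{prod-Y-0} after distributing derivatives so that the lower-order factor is placed in $\Z^0\hookleftarrow\Y^1$; a standard bookkeeping argument (splitting according to whether $|\beta|\le N/2$ or not, and using $\Y^N\hookrightarrow\Y^j$ for $j\le N$) closes it, just as for classical Sobolev algebras.

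The main obstacle is purely the exponent $\F_i(\sqrt\mu\,\xi)$ factor: without sub-additivity one cannot distribute the full multiplier $|\xi|\,\F_i(\sqrt\mu\,\xi)$ onto the two convolution factors, and the estimate would fail. Everything else is a routine imitation of the classical Sobolev/Wiener-algebra product estimates, with the only care being to track the two distinct Fourier multipliers $\F_1^\mu,\F_2^\mu$ and the weights $\mu\gamma,\mu$ throughout (which appear identically on both sides and hence cause no trouble), and to use the injections in \eqref{injections} to convert $\Z$-norms into $\Y$-norms at the very end.
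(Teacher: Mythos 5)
Your core argument --- splitting the multiplier $|\xi|\F_i(\sqrt\mu\,\xi)$ inside the convolution via the sub-additivity of $k\mapsto|k|\F_i(k)$ and then applying Young's convolution inequality --- is exactly the paper's proof of \eqref{prod-pqr}, and your treatment of \eqref{prod-Z-0} (all exponents equal to $1$) and of \eqref{prod-Y-N} (Leibniz rule, with the lower-order factor placed in $\Z^0$ and converted via $\norm{\cdot}_{\Z^0}\lesssim\norm{\cdot}_{\Y^1}$) also coincides with the paper's.

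There is, however, a slip in your specialization for \eqref{prod-Y-0}. With your choice $(p,q)=(\t p,\t q)=(1,2)$ you get $\norm{\widehat{\partial_x\F_i^\mu\{fg\}}}_{L^2}\leq\norm{\widehat f}_{L^1}\norm{\widehat{\partial_x\F_i^\mu g}}_{L^2}+\norm{\widehat g}_{L^1}\norm{\widehat{\partial_x\F_i^\mu f}}_{L^2}$; after inserting the weights $\sqrt{\mu\gamma}$, $\sqrt\mu$, the first term is controlled by $\norm{f}_{\Z^0}\norm{g}_{\Y^0}$, not by $\norm{f}_{\Y^0}\norm{g}_{\Z^0}$, and $\norm{f}_{\Z^0}$ is \emph{not} controlled by $\norm{f}_{\Y^0}$ (the injection \eqref{injections} costs a full derivative: $\norm{f}_{\Z^0}\lesssim\norm{f}_{\Y^{1}}$). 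So as written you only obtain the symmetric bound $\norm{fg}_{\Y^0}\lesssim\norm{f}_{\Z^0}\norm{g}_{\Y^0}+\norm{g}_{\Z^0}\norm{f}_{\Y^0}$, which is weaker than \eqref{prod-Y-0} and, importantly, does not suffice for the endpoint terms of your Leibniz argument for \eqref{prod-Y-N} (when all $N$ derivatives fall on one factor, you must be able to put that factor in the $\Y^0$ slot and the underived one in the $\Z^0$ slot). The fix is the paper's choice $r=p=\t q=2$, $q=\t p=1$: bound the first term by $\norm{\widehat f}_{L^2}\norm{\widehat{\partial_x\F_i^\mu g}}_{L^1}$, so that both $L^1$-Fourier norms land on $g$; combined with $\norm{fg}_{L^2}\leq\norm{\widehat g}_{L^1}\norm{f}_{L^2}$ this yields \eqref{prod-Y-0} exactly as stated, and the rest of your argument then goes through unchanged.
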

\begin{proof}
From the sub-additivity, one has $\sqrt\mu|\xi|\F_i(\sqrt\mu\xi)\leq \sqrt\mu|\eta|\F_i(\sqrt\mu\eta)+\sqrt\mu|\xi-\eta|\F_i(\sqrt\mu(\xi-\eta))$. Thus
\begin{align*}\norm{\widehat{\partial_x\F_i^\mu\{fg\}}}_{L^r}^r&= \int_\RR \big(|\xi|\F_i(\sqrt\mu\xi)\big)^r |\widehat f\star \widehat g|^r(\xi)\dd \xi=\int_\RR \dd\xi \left|\int _\RR\dd\eta |\xi|\F_i(\sqrt\mu\xi) \widehat f(\eta)\overline{\widehat g(\xi-\eta) }\right|^r\\
&\leq\int \dd\xi \left|\int \dd\eta |\eta|\F_i(\sqrt\mu\eta) |\widehat f|(\eta)|\widehat g|(\xi-\eta)+|\xi-\eta|\F_i(\sqrt\mu(\xi-\eta)) |\widehat f|(\eta)||\widehat g|(\xi-\eta) \right|^r\\
&\leq\int \dd\xi \left| (|\widehat{\partial_x \F_i^\mu f}|\star|\widehat g|)(\xi)+ (|\widehat f|\star|\widehat{\partial_x \F_i^\mu g}|)(\xi) \right|^r ,
\end{align*} 
where we used that $|\widehat{\partial_x \F_i^\mu f}|(\xi)=|i\xi \F_i(\sqrt\mu\xi) \widehat{f}(\xi)|=|\xi|\F_i(\sqrt\mu\xi) |\widehat{f}(\xi)|$ since $\F_i(\sqrt\mu\xi)\geq 0 $.
Estimate~\eqref{prod-pqr} follows from Young's inequality for convolutions.

Estimate~\eqref{prod-Y-0} is deduced with $r=p=\t q=2$ and $\t p=q=1$, and using~\eqref{injections}.

Estimate~\eqref{prod-Y-N} follows from the above result and triangular inequality,
\[ \norm{fg}_{\Y^N}\leq \sum_{|\alpha+\beta|\leq N}C_{\alpha,\beta,N} \norm{(\partial^\alpha f)(\partial^\beta g)}_{\Y^0}\lesssim \sum_{|\alpha|\leq N-1,|\beta|\leq N} \norm{\partial^\alpha f}_{Y^1_{\F_\mu}}\norm{\partial^\beta f}_{Y^0_{\F_\mu}}.\]

Estimate~\eqref{prod-Z-0} follows from~\eqref{prod-pqr} with $p=\t p=q=\t q=r=1$.
\end{proof}

We now provide Schauder-type estimates in our functional spaces.
\begin{Lemma} \label{L.f/h}
Let $H\in C^\infty(-\delta^{-1},1)$ and $\epsilon\zeta\in L^\infty$ such that 
\[ h_1(\epsilon\zeta)=1-\epsilon\zeta\geq h_0>0, \quad h_2(\epsilon\zeta)=\delta^{-1}+\epsilon\zeta\geq h_0>0.\]
Then, denoting $H_{n,h_0}\eqdef\norm{H}_{C^n([-\delta^{-1}+h_0,1-h_0])}$ and fixing $t_0>1/2$, one has
\begin{itemize}
\item For any $s\geq 0$, if $\zeta\in H^s$ and $f\in H^s_x$, then one has with $n\in\NN,n\geq \max\{s,t_0\}$:
\[\norm{H(\epsilon\zeta)f}_{H^s_x} \leq C(h_0^{-1},H_{n,h_0},\norm{\epsilon\zeta}_{H_x^{\max\{s,t_0\}}})\norm{f}_{H^s_x}.
\]
\item For any $\widehat f\in L^1$, one has
\[
 \norm{\widehat{H(\epsilon\zeta)f}}_{L^1}
\leq C(h_0^{-1},H_{1,h_0},\norm{\epsilon\zeta}_{H_x^{t_0}})\norm{\widehat f}_{L^1}.
\]
\item For any $N\in \NN$, if $\zeta\in H_x^{t_0+1+N}$ and $f\in \Z^N$, then one has 
\[ \norm{H(\epsilon\zeta)f}_{\Z^N}\leq C(h_0^{-1},H_{2+N,h_0},\norm{\epsilon\zeta}_{H_x^{t_0+1+N}})\norm{f}_{\Z^N}.\]
\item For any $N\in \NN$, if $\zeta\in H_x^{t_0+1+N}$ and $f\in \Y^N$, then one has 
\[ \norm{H(\epsilon\zeta)f}_{\Y^N}\leq C(h_0^{-1},H_{2+N,h_0},\norm{\epsilon\zeta}_{H_x^{t_0+1+N}})\norm{f}_{\Y^N}.\]
\end{itemize}
\end{Lemma}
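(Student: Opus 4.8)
The plan is to prove the four Schauder-type estimates in turn, each reducing to the previous ones or to known facts about Sobolev spaces via the Fa\`a di Bruno formula. The essential mechanism throughout is that $H(\epsilon\zeta)$, with $h_1(\epsilon\zeta),h_2(\epsilon\zeta)\geq h_0>0$, has all its derivatives (as a function of $x$) expressible as polynomials in $\partial_x^j(\epsilon\zeta)$ with coefficients $H^{(k)}(\epsilon\zeta)$ that are bounded by $H_{n,h_0}$, since $\epsilon\zeta$ takes values in the compact interval $[-\delta^{-1}+h_0,1-h_0]$ on which $H$ is smooth.

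First I would treat the Sobolev estimate $\norm{H(\epsilon\zeta)f}_{H^s_x}\lesssim\norm{f}_{H^s_x}$. This is the standard Moser-type / Schauder composition estimate: by Leibniz, $\partial_x^\alpha(H(\epsilon\zeta)f)$ is a sum of terms $\partial_x^\beta(H(\epsilon\zeta))\,\partial_x^{\alpha-\beta}f$; the factor $\partial_x^\beta(H(\epsilon\zeta))$ is controlled in $H^{\max\{s,t_0\}-|\beta|}_x\cap L^\infty$ via Fa\`a di Bruno together with the algebra and tame product estimates for $H^{t_0}_x$ ($t_0>1/2$), and one sums using the usual interpolation $\norm{gh}_{H^s}\lesssim\norm{g}_{H^{t_0}}\norm{h}_{H^s}+\norm{g}_{H^s}\norm{h}_{H^{t_0}}$. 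This is classical (see \eg~\cite[Appendix~B]{Lannes}), so I would state it with a one-line justification rather than grinding through it. The second bullet, the estimate on $\norm{\widehat{H(\epsilon\zeta)f}}_{L^1}$, follows from the first: $\norm{\widehat{gh}}_{L^1}\leq\norm{\widehat g}_{L^1}\norm{\widehat h}_{L^1}$ and $\norm{\widehat{H(\epsilon\zeta)}}_{L^1}\lesssim\norm{H(\epsilon\zeta)}_{H^{t_0}_x}\leq C(h_0^{-1},H_{1,h_0},\norm{\epsilon\zeta}_{H^{t_0}_x})$ by the $s=t_0$ case of the first bullet and the embedding $\norm{\widehat{u}}_{L^1}\lesssim\norm{u}_{H^{t_0}_x}$ recorded in~\eqref{injections}.

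For the $\Z^N$ estimate I would proceed by induction on $N$. The norm $\norm{\cdot}_{\Z^N}$ is a finite sum of $\norm{\partial^\alpha\cdot}_{\Z^0}$ over $|\alpha|\leq N$, and each space-time derivative falling on $H(\epsilon\zeta)$ produces, via Fa\`a di Bruno, factors $H^{(k)}(\epsilon\zeta)$ (again bounded, and whose $\widehat{\cdot}\in L^1$ norm is controlled by $\norm{\epsilon\zeta}_{H^{t_0}_x}\leq\norm{\epsilon\zeta}_{H^{t_0+1+N}_x}$ via the second bullet) times products of $\partial^{\beta_j}(\epsilon\zeta)$ with $\sum|\beta_j|\leq N$, each of which lies in $\Z^0$ since $\norm{\partial^\beta(\epsilon\zeta)}_{\Z^0}\lesssim\norm{\epsilon\zeta}_{H^{|\beta|+t_0+1}_x}$ again by~\eqref{injections}. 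Then I apply the algebra property~\eqref{prod-Z-0} of $\Z^0$ (Lemma~\ref{L.produit}) to collect all these factors with the remaining derivative $\partial^{\alpha-\sum\beta_j}f$, which still lies in $\Z^0\subset\Z^N$. Summing over the finitely many multi-indices gives the claim; the loss of $2+N$ derivatives on $H$ comes from needing $H\in C^{2+N}$ to differentiate up to order $N$ and then apply an $H^{t_0}_x\hookrightarrow\Z^0$ with $t_0<2$. The fourth bullet, the $\Y^N$ estimate, is proved identically, now using the $\Y^0$ product estimate~\eqref{prod-Y-0}: $\norm{gh}_{\Y^0}\lesssim\norm{g}_{\Z^0}\norm{h}_{\Y^0}$. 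One writes $\partial^\alpha(H(\epsilon\zeta)f)$ as above, puts all the $H$-derived factors in $\Z^0$ (controlled as before) and the single surviving $\partial^{\alpha'}f$ in $\Y^0\subset\Y^N$, and sums.

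The main obstacle is purely bookkeeping: keeping track, in the Fa\`a di Bruno expansion of $\partial^\alpha(H(\epsilon\zeta))$, of exactly how many derivatives of $\zeta$ and of $H$ are consumed, so as to land on the stated regularity indices $H^{t_0+1+N}_x$ for $\zeta$ and $C^{2+N}$ for $H$ — and making sure that in each term at most one factor carries ``too many'' derivatives so that the asymmetric product estimates~\eqref{prod-Z-0}, \eqref{prod-Y-0} (which require the $\Z^0$ factor to be the ``small'' one) can be applied with the $f$-factor in the low-regularity slot. No genuinely new analytic input is needed beyond Lemma~\ref{L.produit}, the injections~\eqref{injections}, and the classical Sobolev composition estimate.
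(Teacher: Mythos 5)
Your first bullet is fine, but the step you use for the second bullet fails, and the same failure propagates into your treatment of the $\Z^N$ and $\Y^N$ bounds. You write $\norm{\widehat{H(\epsilon\zeta)}}_{L^1}\lesssim \norm{H(\epsilon\zeta)}_{H^{t_0}_x}\leq C(h_0^{-1},H_{1,h_0},\norm{\epsilon\zeta}_{H^{t_0}_x})$ ``by the $s=t_0$ case of the first bullet''. Two things go wrong there. First, the first bullet estimates $\norm{H(\epsilon\zeta)f}_{H^s_x}$ against $\norm{f}_{H^s_x}$; it gives no bound on $\norm{H(\epsilon\zeta)}_{H^{t_0}_x}$ itself (taking $f\equiv 1$ is not allowed, since $1\notin H^{t_0}_x$). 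Second, and more seriously, that quantity is generically infinite: $\zeta$ decays at infinity while $H(\epsilon\zeta)\to H(0)$, so $H(\epsilon\zeta)\notin L^2$ unless $H(0)=0$, and $\widehat{H(\epsilon\zeta)}$ contains the singular part $H(0)\,\widehat{1}$ (a Dirac mass at $\xi=0$); it is not an $L^1$ function, so Young's inequality $\norm{\widehat{gh}}_{L^1}\leq\norm{\widehat g}_{L^1}\norm{\widehat h}_{L^1}$ cannot be applied with $g=H(\epsilon\zeta)$. The same objection hits your induction for the third and fourth bullets, where the underived factor $H(\epsilon\zeta)$ (or $H^{(k)}(\epsilon\zeta)$ with $H^{(k)}(0)\neq0$) is placed in $\Z^0$: $\norm{H(\epsilon\zeta)}_{\Z^0}$ is infinite in general, so the algebra property \eqref{prod-Z-0} and the asymmetric estimate \eqref{prod-Y-0} cannot be invoked on that factor as written.

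The missing ingredient --- and the one device the paper's proof actually turns on --- is to split off the constant: write $H(\epsilon\zeta)f=H(0)f+G_{h_0}(\epsilon\zeta)f$, where $G_{h_0}\in C^\infty(\RR)$ is chosen so that $G_{h_0}=H-H(0)$ on $[-\delta^{-1}+h_0,1-h_0]$, $G_{h_0}=0$ outside $[-\delta^{-1},1]$, and $\norm{G_{h_0}}_{C^n}\leq C(h_0^{-1},H_{n,h_0})$. The term $H(0)f$ is trivial in every norm under consideration, while $G_{h_0}(\epsilon\zeta)$ genuinely lies in the Sobolev scale, with $\norm{G_{h_0}(\epsilon\zeta)}_{H^{t_0}_x}\leq C(h_0^{-1},H_{1,h_0},\norm{\epsilon\zeta}_{H^{t_0}_x})$ by the classical composition estimate (legitimate now because $G_{h_0}(0)=0$); hence $\norm{\widehat{G_{h_0}(\epsilon\zeta)}}_{L^1}$ and $\norm{G_{h_0}(\epsilon\zeta)}_{\Z^0}\leq\norm{G_{h_0}(\epsilon\zeta)}_{H^{t_0+1}_x}$ are finite and controlled. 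Once this decomposition is in place, the rest of your plan (Young's inequality, the algebra property of $\Z^0$, the estimate $\norm{gh}_{\Y^0}\lesssim\norm{g}_{\Z^0}\norm{h}_{\Y^0}$, and a Leibniz induction on $N$ applied to the $G_{h_0}$ part) goes through and is essentially the paper's argument; without it, the second, third and fourth bullets do not close.
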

\begin{proof}
In each case, we decompose $H(\epsilon\zeta)f=H(0)f+\big(H(\epsilon\zeta)-H(0)\big)f=H(0)f+G_{h_0}(\epsilon\zeta)f$ where $G_{h_0}$ is such that $G_{h_0}\in C^\infty(\RR)$, $G_{h_0}(x)=H(x)-H(0)$ for $x\in [-\delta^{-1}+h_0,1-h_0]$ and $G_{h_0}(x)=0$ for $x\in \RR\setminus [-\delta^{-1},1]$. It is clear that, since $\min\{h_1(\epsilon\zeta),h_2(\epsilon\zeta)\}\geq h_0>0$, one can construct such a $G_{h_0}$ satisfying additionally: for any $n\in \NN$, $\norm{G_{h_0}}_{C^n}=C(h_0^{-1},H_{n,h_0})$.

The first estimate is a direct consequence of a classical Schauder-type estimates in Sobolev spaces; see \eg~\cite{Tao06}. As for the second, one has
\[ \norm{\widehat{H(\epsilon\zeta)f}}_{L^1}\leq \norm{\widehat{H(0)f}}_{L^1}+\norm{\widehat{G_{h_0}(\epsilon\zeta)}\star\widehat{f}}_{L^1}\leq H(0) \norm{\widehat{f}}_{L^1}+\norm{\widehat{G_{h_0}(\epsilon\zeta)}}_{L^1}\norm{\widehat{f}}_{L^1}.\]
The second estimate now follows from~\eqref{injections} and applying the above result:
\[ \norm{\widehat{G_{h_0}(\epsilon\zeta)}}_{L^1}\lesssim \norm{ G_{h_0}(\epsilon\zeta) }_{H_x^{t_0}}\leq C(h_0^{-1},H_{1,h_0},\norm{\epsilon\zeta}_{H_x^{t_0}}).\]

Using that $\Z^0$ is an algebra, one has
\[ \norm{H(\epsilon\zeta)f}_{\Z^0}\leq \norm{H(0)f}_{\Z^0}+\norm{G_{h_0}(\epsilon\zeta)f}_{\Z^0}\leq H(0) \norm{f}_{\Z^0}+\norm{G_{h_0}(\epsilon\zeta)}_{\Z^0}\norm{f}_{\Z^0}.\]
Since $\norm{u}_{\Z^0}\leq \norm{u}_{H_x^{t_0+1}}$ for any $u\in H_x^{t_0+1}$, one deduces the third estimate for $N=0$ as above. The case $N\geq 1$ is obtained by induction, differentiating $N$ times $H(\epsilon\zeta)f$ and applying Leibniz's rule.

The last estimate is obtained identically since by Lemma~\ref{L.produit}
\[ \norm{H(\epsilon\zeta)f}_{\Y^0}\leq \norm{H(0)f}_{\Y^0}+\norm{G_{h_0}(\epsilon\zeta)f}_{\Y^0}\leq H(0) \norm{f}_{\Y^0}+\norm{G_{h_0}(\epsilon\zeta)}_{\Z^0}\norm{f}_{\Y^0}.\]
The proof is now complete.
\end{proof}

\subsection{Quasilinearization of the system}\label{S.quasilinearization}
The following Lemma introduces the quasilinear block systems which are central in our analysis.
\begin{Lemma}\label{L.linearize}
Let $U=(\zeta,w)^\top \in \X^N\times \Y^N$ with $N\geq4$, solution to~\eqref{GN-w} and satisfying
\begin{equation}\label{C.depth}
h_1(\epsilon\zeta)=1-\epsilon\zeta\geq h_0>0,\qquad h_2(\epsilon\zeta)=\delta^{-1}+\epsilon\zeta\geq h_0>0.
\end{equation}
For any $\alpha=(\alpha_1,\alpha_2)$ such that $|\alpha|\leq N$, denote $U^{(\alpha)}\eqdef (\partial^\alpha \zeta,\partial^\alpha w)^\top$ and $\zeta^{\langle\check\alpha\rangle}\eqdef (\partial^{\alpha-\e_1} \zeta,\partial^{\alpha-\e_2} \zeta)^\top$ (if $\alpha_j=0$, then $\partial^{\alpha-\e_j} \zeta=0$ by convention). Then one can define $r^{(\alpha)}\in (\Y^0)^{\star}$ such that
\[
\left\{ \begin{array}{l}
\displaystyle\partial_{ t}\zeta^{(\alpha)} \ + \ \partial_x w^{(\alpha)} \ =\ 0, \\ \\
\mathfrak b[\epsilon\zeta] \partial_{ t} w^{(\alpha)} \ + \ \partial_x \mathfrak a[\epsilon\zeta,\epsilon w] {\zeta}^{(\alpha)} \ + \ \partial_x \check{\mathfrak{a}}_\alpha[\epsilon\zeta] \zeta^{\langle\check\alpha\rangle} \ + \ \mathfrak c[\epsilon\zeta,\epsilon w] \partial_x w^{(\alpha)}\ = \ r^{(\alpha)} ,
\end{array}
\right.
\]
where 
\begin{align*}
\mathfrak{a}[\epsilon\zeta,\epsilon w]\bullet &\eqdef\Big( (\gamma+\delta)-\epsilon^2\dfrac{h_1^3 +\gamma h_2^3 }{(h_1 h_2)^3} |w|^2 \Big)\times\bullet-\mu\epsilon^2 \big(\dd_1\R_2^\F[h_2,w]+\gamma\dd_1\R_1^\F[h_1,w]\big)\bullet \\
&\hspace{7cm} -\frac{\gamma+\delta}\Bo \partial_x\left( \frac{ \partial_x\bullet}{(1+\mu\epsilon^2|\partial_x\zeta|^2)^{3/2}}\right)\\
\mathfrak{b}[\epsilon\zeta]\bullet &\eqdef \frac{h_1+\gamma h_2}{h_1 h_2}\bullet \ +\ \mu(\Q_2^\F[h_2]+\gamma \Q_1^\F[h_1])\bullet ,\\
\mathfrak{c}[\epsilon\zeta,\epsilon w]\bullet &\eqdef 2\epsilon \frac{h_1^2 -\gamma h_2^2 }{(h_1 h_2)^2}w\times \bullet -\mu \epsilon \big(\dd \Q_2^\F[h_2](w)-\gamma \dd \Q_1^\F[h_1](w)\big) \bullet \\
&\hspace{5cm} - \mu\epsilon \big(\dd_2\R_2^\F[h_2,w]-\gamma\dd_2\R_1^\F[h_1,w]\big)\bullet ,
\end{align*}
with $\Q_i^\F,\dd \Q_i^\F,\dd_1 \R_i^\F,\dd_2 \R_i^\F$ defined in~\eqref{def-Qi},\eqref{def-dQi},\eqref{def-d1Ri},\eqref{def-d2Ri} below; and 
\[\check{\mathfrak{a}}_\alpha[\epsilon\zeta]\zeta^{\langle\check\alpha\rangle} \eqdef \frac{\gamma+\delta}\Bo \partial_x\left(\sum_{j\in\{1,2\}}3\alpha_j\mu\epsilon^2\frac{(\partial_x\partial^{\e_j}\zeta)(\partial_x\zeta)(\partial_x\zeta^{\langle\check\alpha\rangle} _j)}{(1+\mu\epsilon^2|\partial_x\zeta|^2)^{5/2}} \right) \quad \text{  if $|\alpha|=N$, and $0$ otherwise}
.\]
Moreover, $r^{(\alpha)}=r^{(\alpha)}[\epsilon\zeta,\epsilon w]$ satisfies
\begin{equation}\label{r-est}
\norm{r^{(\alpha)} }_{(\Y^0)^{\star}}\leq C(\m,h_0^{-1}, E^{N}(U)) \times E^{|\alpha|}(U)^{1/2} \times (\epsilon +\Upsilon_\F^{1/2}\norm{w}_{\Z^1}+\Upsilon_\F\norm{w}_{\Z^1}^2) ,
\end{equation}
and
\begin{multline}\label{rdiff-est}
\norm{ r^{(\alpha)}[\epsilon\zeta_1,\epsilon w_1]-r^{(\alpha)}[\epsilon\zeta_2,\epsilon w_2] }_{(\Y^0)^{\star}}
\leq C(\m,h_0^{-1}, E^{N}(U_1), E^{N}(U_2))\times E^{|\alpha|}(U_1-U_2)^{1/2} \\ \times (\epsilon +\Upsilon_\F^{1/2}\norm{w_1}_{\Z^1}+\Upsilon_\F\norm{w_1}_{\Z^1}^2) .
\end{multline}
\end{Lemma}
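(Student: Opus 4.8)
The plan is to differentiate the system~\eqref{GN-w} with respect to the multi-index $\alpha$ via the Leibniz rule, isolate the terms that carry the top-order derivatives of $U^{(\alpha)}=(\partial^\alpha\zeta,\partial^\alpha w)^\top$, package those into the operators $\mathfrak{a},\mathfrak{b},\mathfrak{c}$ (and the correction $\check{\mathfrak a}_\alpha$, which only survives when $|\alpha|=N$ because of the extra derivative coming from the surface-tension denominator), and dump everything else into the remainder $r^{(\alpha)}$. First I would write $h_i=h_i(\epsilon\zeta)$ and expand the three nonlinearities of~\eqref{GN-w}: the pressure-type term $\frac{h_1+\gamma h_2}{h_1h_2}w$, the quadratic flux $\frac{h_1^2-\gamma h_2^2}{(h_1h_2)^2}|w|^2$, and the dispersive $\mu\Q^\F,\mu\epsilon\R^\F$ terms. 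Each factor $F(\epsilon\zeta)$ contributes, under $\partial^\alpha$, one ``principal'' piece $F(\epsilon\zeta)\partial^\alpha(\cdot)$ plus commutators where at least one derivative falls on a coefficient; the principal pieces assemble exactly into $\mathfrak b[\epsilon\zeta]\partial_t w^{(\alpha)}$, $\partial_x\mathfrak a[\epsilon\zeta,\epsilon w]\zeta^{(\alpha)}$, $\mathfrak c[\epsilon\zeta,\epsilon w]\partial_x w^{(\alpha)}$. The linearized differential $\dd_1\R_i^\F,\dd_2\R_i^\F,\dd\Q_i^\F$ appearing in $\mathfrak a,\mathfrak c$ come from differentiating $\R_i^\F[h_i,w],\Q_i^\F[h_i]$ once in each of their arguments; I would simply record these as the definitions~\eqref{def-dQi}--\eqref{def-d2Ri}. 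The surface-tension term requires a little care: differentiating $\partial_x^2\big(\tfrac{\partial_x\zeta}{\sqrt{1+\mu\epsilon^2|\partial_x\zeta|^2}}\big)$ $|\alpha|$ times, the top contribution is $\partial_x\big(\tfrac{\partial_x\zeta^{(\alpha)}}{(1+\mu\epsilon^2|\partial_x\zeta|^2)^{3/2}}\big)$ (absorbed into $\mathfrak a$), the next one is the $\check{\mathfrak a}_\alpha$ term (present only when the full $N$ derivatives have been used, so that the residual lands outside $\X^0$ and must be treated as a genuine coefficient of $\zeta^{\langle\check\alpha\rangle}$ rather than inside $r^{(\alpha)}$), and all further terms are harmless.

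Having extracted these, the remainder $r^{(\alpha)}$ is a finite sum of products of the schematic form $(\partial^{\beta}\text{coeff})\cdot(\partial^{\gamma}U)$ with $|\beta|\ge1$, $|\beta|+|\gamma|\le|\alpha|+(\text{order of the operator})$, together with the lower-order pieces of the surface tension term. To estimate $\norm{r^{(\alpha)}}_{(\Y^0)^\star}$ I would distinguish according to which nonlinearity each term comes from. Terms not involving $\F_i^\mu$ are handled by the product estimates for $\X^N,\W^N$ recalled just before Lemma~\ref{L.produit}, together with the Schauder estimates of Lemma~\ref{L.f/h} for the coefficients $F(\epsilon\zeta)$ and Lemma~\ref{L.embedding} to pass from $\X$ to $\Y$ norms; these produce the $\epsilon$ prefactor (every such term has at least one derivative on a coefficient $\epsilon\zeta$ or $\epsilon w$, hence is $O(\epsilon)$). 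The $\Q^\F,\R^\F$ terms are the delicate ones: here I would use~\eqref{prod-Y-0}--\eqref{prod-Y-N} and~\eqref{prod-Z-0} of Lemma~\ref{L.produit}, which crucially exploit the sub-additivity of $k\mapsto|k|\F_i(k)$, to bound $\mu\partial_x\F_i^\mu$-type factors; the bound~\eqref{C.Fi}, i.e.\ $\F_i(\sqrt\mu\xi)^2\le K_{\F_i}(\mu|\xi|^2)^{-\varsigma}$, trades one or two powers of $\sqrt\mu\partial_x$ against $(\mu\Bo)^{1-\varsigma}$ factors against the $\X^0$ (surface-tension) norm, and this is precisely where the parameter $\Upsilon_\F$ defined in~\eqref{def-Upsilon} is generated. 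The most singular contributions carry two unpaired $\sqrt\mu\partial_x\F_i^\mu$ factors acting on $w$, which is why the bound has a $\Upsilon_\F\norm{w}_{\Z^1}^2$ term in addition to the linear $\Upsilon_\F^{1/2}\norm{w}_{\Z^1}$ term; the $\Z^1$ (rather than $\Z^0$) norm appears because one $L^\infty$-type factor must control a product involving one more derivative than the others, and the embedding $\norm{f}_{\Z^0}\lesssim\norm{f}_{\Y^1}$ forces the extra index.

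Once the ``coefficient $\times$ top-derivative'' decomposition and the case analysis above are fixed, the difference estimate~\eqref{rdiff-est} is obtained by the standard telescoping argument: $r^{(\alpha)}[\epsilon\zeta_1,\epsilon w_1]-r^{(\alpha)}[\epsilon\zeta_2,\epsilon w_2]$ is again a finite sum of products in which one factor is a difference $\partial^\beta(\zeta_1-\zeta_2)$ or $\partial^\beta(w_1-w_2)$ and the remaining factors are evaluated at either $(\zeta_1,w_1)$ or $(\zeta_2,w_2)$; applying the same product and Schauder estimates, with $E^N(U_1),E^N(U_2)$ controlling the undifferenced factors and $E^{|\alpha|}(U_1-U_2)^{1/2}$ coming out of the difference factor, yields the claimed bound (with the $\Upsilon_\F$-dependence dictated by $w_1$ because the symmetrizer and coercivity will later be read off $U_1$). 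I expect the main obstacle to be purely organizational: keeping track, for each of the finitely many monomials produced by Leibniz on the $\Q^\F,\R^\F$ terms, of how many $\sqrt\mu\partial_x\F_i^\mu$ factors land on coefficients versus on $U^{(\gamma)}$, and matching each to the right combination of $\Y$-, $\Z$- and $\X$-norm product estimates so that (i) no derivative count exceeds $N$, (ii) the correct power of $\epsilon$ and of $\Upsilon_\F$ is extracted, and (iii) the truly top-order pieces are the ones that have been removed into $\mathfrak a,\mathfrak b,\mathfrak c,\check{\mathfrak a}_\alpha$ and not left in $r^{(\alpha)}$. The sub-additivity of $|k|\F_i(k)$ is what makes step (ii) work at all, and verifying that the borderline term with two $\F_i^\mu$-derivatives on $w$ is genuinely controlled in $(\Y^0)^\star$ (duality, not $L^2$) rather than requiring a gain we do not have is the one place where a little ingenuity beyond bookkeeping is needed.
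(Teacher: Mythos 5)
Your plan coincides with the paper's own proof: differentiate~\eqref{GN-w} via Leibniz, use the first equation $\partial_t\zeta=-\partial_x w$ to recast top-order time derivatives of coefficients, collect the principal pieces into $\mathfrak a,\mathfrak b,\mathfrak c$ (keeping $\check{\mathfrak a}_\alpha$ separate only at $|\alpha|=N$), and estimate the remainder by a case analysis on which factor carries the top derivative, pairing against $f\in\Y^0$ by duality and invoking Lemmata~\ref{L.produit},~\ref{L.f/h} and~\ref{L.embedding} (the sub-additivity of $|k|\F_i(k)$ and the $\X^0\hookrightarrow\Y^0$ embedding being exactly what produces the $\Upsilon_\F^{1/2}\norm{w}_{\Z^1}$ and $\Upsilon_\F\norm{w}_{\Z^1}^2$ factors), with~\eqref{rdiff-est} obtained by the same telescoping. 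This is essentially the argument given in the paper, so no further comparison is needed.
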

\begin{proof}
The proof simply consists in differentiating $\alpha$ times the Green-Naghdi system~\eqref{GN-w}. The higher order terms contribute to $ \mathfrak{a}$, $ \mathfrak{b}$, $ \mathfrak{c}$ and $ \check{\mathfrak{a}}$, while lower order terms contribute to $r^{(\alpha)}$. In the following, we explain how the estimates concerning $r^{(\alpha)}$ are obtained.
\medskip

\noindent {\em Contribution from the first order terms,} $\partial_t\big(\frac{h_1+\gamma h_2}{h_1 h_2}w\big)+\frac{\epsilon}{2} \partial_x\big(\frac{h_1^2 -\gamma h_2^2 }{(h_1 h_2)^2} |w|^2\big)$.

Applying Leibniz's rule, one finds
\begin{equation}\label{r1-def}
\partial^\alpha\left( \frac{\epsilon}{2} \partial_x\Big(\frac{h_1^2 -\gamma h_2^2 }{(h_1 h_2)^2}| w|^2\Big) \right)
=-\epsilon^2\dfrac{h_1^3 +\gamma h_2^3 }{(h_1 h_2)^3} |w|^2 \partial_x \partial^\alpha {\zeta} \ + \ \epsilon \dfrac{h_1^2 -\gamma h_2^2 }{(h_1 h_2)^2} w\partial_x \partial^\alpha w+\epsilon r_1^{(\alpha)},
\end{equation}
with 
\[ r_1^{(\alpha)}=\sum_{n=0}^{|\alpha|+1}\sum_{\beta_i,\beta'_j} \epsilon^n C^{(\beta_i,\beta'_j)} G^{(n)}(\epsilon\zeta) \left(\prod_{i=1}^n \partial^{\beta_i}\zeta\right)\left(\prod_{j=1}^2 \partial^{\beta'_i}w\right) \eqdef \sum_{n=0}^{|\alpha|+1}\sum_{\beta_i,\beta'_j} \epsilon^n C^{(\beta_i,\beta'_j)} r_1^{(\beta_i,\beta'_j)},\]
where $(\beta_i,\beta'_j)$ is any $n+2$-tuple of multi-index satisfying 
\[1\leq |\beta_1|\leq \dots \leq |\beta_n|\leq |\alpha|,\quad \ 0\leq |\beta'_1|\leq |\beta'_2|\leq|\alpha|\quad \text{ and } \quad \sum_{i=1}^n \beta_i+\sum_{j=1}^2\beta'_j=\alpha+(0,1),\]
$C^{(\beta_i,\beta'_j)} $ is a constant and $G^{(n)}$ the $n$-th derivative of $G(X)=\frac{h_1^2(X) -\gamma h_2^2(X) }{(h_1 h_2)^2(X)}=\frac{(1-X)^2-\gamma(\delta^{-1}+X)^2}{(1-X)^2(\delta^{-1}+X)^2}$.

We estimate each of these terms as follows :
\begin{itemize}
\item if $|\beta_n|=|\alpha|$, then $0\leq |\beta_1|,\dots,|\beta_{n-1}|,|\beta'_1|,|\beta'_2|\leq 1$, and
\[\norm{ r_1^{(\beta_i,\beta'_j)}}_{L^2}\leq \norm{G^{(n)}(\epsilon\zeta)}_{L^\infty}\norm{\partial^{\beta_n}\zeta}_{L^2}\left(\prod_{i=1}^{n-1} \norm{\partial^{\beta_i}\zeta}_{L^\infty}\right)\left(\prod_{j=1}^2 \norm{\partial^{\beta'_i}w}_{L^\infty}\right);\]
\item otherwise $0\leq |\beta_1|,\dots,|\beta_{n}|,|\beta'_1|\leq |\alpha|-1$, and 
\[\norm{ r_1^{(\beta_i,\beta'_j)}}_{L^2}\leq \norm{G^{(n)}(\epsilon\zeta)}_{L^\infty}\left(\prod_{i=1}^n \norm{\partial^{\beta_i}\zeta}_{L^\infty}\right)\left( \norm{\partial^{\beta'_1}w}_{L^\infty}\norm{\partial^{\beta'_2}\zeta}_{L^2}\right).\]
\end{itemize}
One has $\norm{G^{(n)}(\epsilon\zeta)}_{L^\infty}\leq C(\m,h_0^{-1})$ since $\epsilon\zeta$ satisfies~\eqref{C.depth}; and by Sobolev embedding, 
\[\norm{\partial^\beta u}_{L^\infty}\leq \norm{\partial^{\beta}u}_{H^1_x}\leq \min\big\{\norm{u}_{\X^{1+|\beta|}}, \norm{u}_{\Y^{1+|\beta|}}\big\}.\]
 We deduce immediately that for $N\geq 2$ and $|\alpha|\leq N$,
\begin{equation}\label{r1-est}
\norm{r_1^{(\alpha)}}_{L^2}\leq C\big(\m,h_0^{-1},E^{N}(U)\big) \times E^{|\alpha|}(U)^{1/2}.
\end{equation}
\medskip

For the second contribution, one deduces from the first equation of~\eqref{GN-w}, $\partial_t\zeta=-\partial_xw$, that
\begin{equation} \label{r2-def}
\partial^\alpha \partial_t \left(\frac{h_1+\gamma h_2}{h_1 h_2}w\right) = \frac{h_1+\gamma h_2}{h_1 h_2}\partial_t \partial^\alpha w+\epsilon w \frac{h_1^2-\gamma h_2^2}{h_1h_2}\partial_x \partial^\alpha w +\epsilon r_2^{(\alpha)},
\end{equation}
where $r_2^{(\alpha)}$ is estimated as above:
\begin{equation}\label{r2-est}
\norm{r_2^{(\alpha)}}_{L^2}\leq C\big(\m,h_0^{-1},E^{N}(U)\big) \times E^{|\alpha|}(U)^{1/2}.
\end{equation}
\bigskip

\noindent {\em Contribution from the dispersive terms,} $\mu \partial_{ t} \big(\Q^\F[\epsilon\zeta]w\big)- \mu\epsilon\partial_x\big( \R^\F[\epsilon\zeta]w\big)$.

Define (with a slight abuse of notation with respect to Section~\ref{S.Models})
\begin{equation}\label{def-Qi}
\Q_i^\F[h_i]w\eqdef -\frac13 h_i^{-1}\partial_x \F_i^\mu\big\{h_i^3 \partial_x \F_i^\mu\{h_i^{-1}w\}\big\},
\end{equation}
so that $\Q^\F[\epsilon\zeta]w=\Q_2^\F[h_2]w+\gamma \Q_1^\F[h_1]w$. Differentiating $\alpha+\e_1$ times and using $\partial_t \zeta=-\partial_x w$ yields
\begin{equation}\label{r3i-def}
\partial^\alpha \partial_t \Q_i^\F[h_i]w= \Q_i^\F[h_i]\partial^\alpha \partial_t w- (-1)^i\dd \Q_i^\F[h_i](w)(\epsilon \partial^\alpha \partial_x w )+ r_{3,i}^{(\alpha)}
\end{equation}
where
\begin{multline} \label{def-dQi} \dd \Q_i^\F[h_i](w)\bullet =\frac13 h_i^{-2}\Big( \partial_x \F_i^\mu\big\{h_i^3 \partial_x \F_i^\mu\{h_i^{-1}w\}\big\}\Big)\times \bullet \\-h_i^{-1}\partial_x \F_i^\mu\big\{h_i^2 \partial_x \F_i^\mu\{h_i^{-1}w\} \times \bullet\big\}
+\frac13 h_i^{-1}\partial_x \F_i^\mu\big\{h_i^3 \partial_x \F_i^\mu\{h_i^{-2}w\times\bullet \}\big\};
\end{multline}
and
\[ r_{3,i}^{(\alpha)}=\sum_{\beta_j} C^{(\beta_j)} (\partial^{\beta_1} h_i^{-1})\partial_x \F_i^\mu\big\{(\partial^{\beta_2} h_i^3) \partial_x \F_i^\mu\{(\partial^{\beta_3} h_i^{-1})(\partial^{\beta_4} w)\}\big\}\eqdef \sum_{\beta_j} C^{(\beta_j)} r_{3,i}^{(\beta_j)},\]
where $C^{(\beta_j)} $ is a constant and $(\beta_j)$ is any $4$-tuple of multi-index satisfying 
\[0\leq |\beta_1|,|\beta_2|,|\beta_3|, |\beta_4|\leq |\alpha| \quad \text{ and } \quad \sum_{j=1}^4 \beta_j=\alpha+(1,0),\]

We estimate each of these terms by assuming that $U=(\zeta,w)^\top \in \S(\RR)\times\S(\RR)$, so that for any $f\in \S(\RR)$, the following identities are immediately valid:
\begin{align*} \langle r_{3,i}^{(\beta_j)},f \rangle_{(\Y^0)^{\star}}&= \Big( (\partial^{\beta_1} h_i^{-1})\partial_x \F_i^\mu\big\{(\partial^{\beta_2} h_i^3) \partial_x \F_i^\mu\{(\partial^{\beta_3} h_i^{-1})(\partial^{\beta_4} w)\}\big\}\ , \ f \Big)_{L^2}\\
&=- \Big((\partial^{\beta_2} h_i^3) \partial_x \F_i^\mu\{(\partial^{\beta_3} h_i^{-1})(\partial^{\beta_4} w)\}\ , \ \partial_x \F_i^\mu\{ (\partial^{\beta_1} h_i^{-1}) f\} \Big)_{L^2}.
\end{align*}
The estimates hold as well for $U=(\zeta,w)^\top \in \X^N\times\Y^N$ ($N\geq4$) and $f\in \Y^0$ by density of $\S(\RR)$ in $\Y^0$ and $\X^0$ using standard continuity arguments.
\begin{itemize}
\item if $|\beta_1|=|\alpha|$, then $0\leq |\beta_2|,|\beta_3|,|\beta_4|\leq 1$ and
\[
\left\vert \gamma^{2-i}\mu\langle r_{3,i}^{(\beta_j)},f \rangle_{(\Y^0)^{\star}}\right\vert \leq \gamma^{2-i}\mu \norm{ \partial^{\beta_1} h_i^{-1}}_{L^2} \norm{\partial_x \F_i^\mu\big\{(\partial^{\beta_2} h_i^3) \partial_x \F_i^\mu\{(\partial^{\beta_3} h_i^{-1})(\partial^{\beta_4} w)\}\big\}}_{L^\infty} \norm{f }_{L^2} .
\]
Notice that for $|\beta_1|=|\alpha|\geq 1$
 there exists $j\in\{1,2\}$ such that $\e_j\leq \beta_1$ and
\[ \norm{ \partial^{\beta_1} h_i^{-1}}_{L^2} = \norm{ \partial^{\beta_1-\e_j} \big( h_i^{-2} \epsilon\partial^{\e_j} \zeta\big) }_{L^2} \leq C(\m,h_0^{-1},\norm{\epsilon\zeta}_{W^{|\alpha|-1,\infty}}) \times \epsilon\norm{\zeta}_{H^{|\alpha|}} .\]
Now, using several times Lemma~\ref{L.f/h}, and since $\partial^{\beta_2} h_i^3=3 h_i^{2} \epsilon \partial^{\beta_2}\zeta $ if $|\beta_2|=1$ or $\partial^{\beta_2} h_i^3=h_i^3$ if $|\beta_2|=0$ (and similarly for $ \partial^{\beta_3} h_i^{-1}$), one has
\begin{align*}
\sqrt{\gamma^{2-i}\mu}\norm{\partial_x \F_i^\mu\big\{(\partial^{\beta_2} h_i^3) \partial_x \F_i^\mu\{(\partial^{\beta_3} h_i^{-1})(\partial^{\beta_4} w)\}\big\}}_{L^\infty}&\leq \norm{(\partial^{\beta_2} h_i^3) \partial_x \F_i^\mu\{(\partial^{\beta_3} h_i^{-1})(\partial^{\beta_4} w)\}}_{\Z^0}\\
&  \leq  C(\m,h_0^{-1},\norm{\epsilon\zeta}_{H^3} )\norm{ (\partial^{\beta_3} h_i^{-1})(\partial^{\beta_4} w)}_{\Z^1}\\
&  \leq  C(\m,h_0^{-1},\norm{\epsilon\zeta}_{H^4})\norm{w}_{\Z^2} .
\end{align*}
Therefore, since $\max\{4,|\alpha|\}\leq N$,
\[\left\vert \gamma^{2-i}\mu\langle r_{3,i}^{(\beta_j)},f \rangle_{(\Y^0)^{\star}}\right\vert \leq C(\m,h_0^{-1},\norm{\epsilon\zeta}_{H^N}) \times \epsilon\sqrt{\gamma^{2-i}\mu} \norm{w}_{\Z^2} \norm{\zeta}_{\X^{|\alpha|}} \norm{f}_{L^2}. \]
\item if $|\beta_2|=|\alpha|$, then $0\leq |\beta_1|,|\beta_3|,|\beta_4|\leq 1$ and
\[\left\vert\gamma^{2-i} \mu\langle r_{3,i}^{(\beta_j)},f \rangle_{(\Y^0)^{\star}}\right\vert \leq \gamma^{2-i}\mu\norm{\partial^{\beta_2} h_i^3}_{L^2} \norm{\partial_x \F_i^\mu\{(\partial^{\beta_3} h_i^{-1})(\partial^{\beta_4} w)\}}_{L^\infty} \norm{\partial_x \F_i^\mu\big\{ (\partial^{\beta_1} h_i^{-1})f\big\} }_{L^2} .\]
One has as above $\norm{\partial^{\beta_2} h_i^3}_{L^2} \leq \epsilon \norm{\zeta}_{H^{|\alpha|}} C(\m,\norm{\epsilon\zeta}_{W^{|\alpha|-1}})$.
By Lemma~\ref{L.f/h}, one has
\[
\sqrt{\gamma^{2-i}\mu}\norm{\partial_x \F_i^\mu\big\{ (\partial^{\beta_1} h_i^{-1})f\big\} }_{L^2}\leq \norm{ (\partial^{\beta_1} h_i^{-1})f }_{\Y^0}\leq C(\m,h_0^{-1},\norm{\epsilon\zeta}_{H^3}) \norm{f}_{\Y^0}.\]
The last term is treated identically and one obtains eventually
\[\left\vert\gamma^{2-i} \mu\langle r_{3,i}^{(\beta_j)},f \rangle_{(\Y^0)^{\star}}\right\vert \leq C(\m,h_0^{-1},\norm{\epsilon\zeta}_{H^N}) \times \epsilon \norm{\zeta}_{\X^{|\alpha|}} \norm{w}_{\Z^1} \norm{f}_{\Y^0}.\]
\item if $|\beta_4|=|\alpha|$, then $0\leq |\beta_1|,|\beta_2|,|\beta_3|\leq 1$ and
\[\left\vert\gamma^{2-i} \mu \langle r_{3,i}^{(\beta_j)},f \rangle_{(\Y^0)^{\star}}\right\vert \leq \gamma^{2-i}\mu \norm{\partial^{\beta_2} h_i^3}_{L^\infty} \norm{\partial_x \F_i^\mu\{(\partial^{\beta_3} h_i^{-1})(\partial^{\beta_4} w)\}}_{L^2} \norm{\partial_x \F_i^\mu\big\{ (\partial^{\beta_1} h_i^{-1})f\big\} }_{L^2} .\]
Reasoning as above and since $|\beta_1|+|\beta_2|+|\beta_3|=1$, one obtains
\[\left\vert \gamma^{2-i} \mu \langle r_{3,i}^{(\beta_j)},f \rangle_{(\Y^0)^{\star}}\right\vert \leq C(\m,h_0^{-1},\norm{\epsilon \zeta}_{H^3}) \times \epsilon\norm{\zeta}_{\Z^1} \norm{w}_{\Y^{|\alpha|}} \norm{f}_{\Y^0}. \]
\item if $|\beta_3|=|\alpha|$, one obtains as above
\[\left\vert \gamma^{2-i}\mu\langle r_{3,i}^{(\beta_j)},f \rangle_{(\Y^0)^{\star}}\right\vert \leq C(\m,h_0^{-1},\norm{\epsilon\zeta}_{H^3},\norm{\epsilon\zeta}_{W^{|\alpha|-1,\infty}}) \times \epsilon \norm{w}_{\Z^1} \norm{\zeta}_{\Y^{|\alpha|}} \norm{f}_{\Y^0}. \]
By Lemma~\ref{L.embedding} and~\eqref{def-Upsilon}, it follows
\[\left\vert \gamma^{2-i}\mu\langle r_{3,i}^{(\beta_j)},f \rangle_{(\Y^0)^{\star}}\right\vert \leq C(\m,h_0^{-1},\norm{\epsilon\zeta}_{H^N}) \times \Upsilon_\F^{1/2} \norm{w}_{\Z^1} \norm{\zeta}_{\X^{|\alpha|}} \norm{f}_{\Y^0}. \]
\item otherwise, one has $0\leq |\beta_1|,|\beta_2|,|\beta_3|,|\beta_4|\leq |\alpha|-1$. 

If $|\beta_1|\leq |\alpha|-2$, we integrate by parts and estimate
\[\left\vert\gamma^{2-i} \mu\langle r_{3,i}^{(\beta_j)},f \rangle_{(\Y^0)^{\star}}\right\vert \leq \norm{\partial^{\beta_2} h_i^3}_{L^\infty} \norm{\partial_x \F_i^\mu\{(\partial^{\beta_3} h_i^{-1})(\partial^{\beta_4} w)\}}_{\Y^0} \norm{\partial_x \F_i^\mu\big\{ (\partial^{\beta_1} h_i^{-1})f\big\} }_{\Y^0} .\]
If $|\beta_1|= |\alpha|-1$, then we estimate
\[\left\vert \gamma^{2-i} \mu\left(r_{3,i}^{(\beta_j)},f\right)_{L^2}\right\vert \leq \mu \norm{ \partial^{\beta_1} h_i^{-1}}_{L^2} \norm{\partial_x \F_i^\mu\big\{(\partial^{\beta_2} h_i^3) \partial_x \F_i^\mu\{(\partial^{\beta_3} h_i^{-1})(\partial^{\beta_4} w)\}\big\}}_{L^\infty} \norm{f }_{L^2} .\]
In both cases, we find
 \[\left\vert \gamma^{2-i} \mu\langle r_{3,i}^{(\beta_j)},f \rangle_{(\Y^0)^{\star}}\right\vert \leq C(\m,h_0^{-1},\norm{\epsilon\zeta}_{H^{N}},\norm{w}_{\Y^{N}}) \times \epsilon \norm{\zeta}_{\X^{|\alpha|}}\norm{f}_{\Y^0}.\]
\end{itemize}

Plugging these estimates into~\eqref{r3i-def}, we proved
\begin{equation}\label{r3-def}
\mu\partial^\alpha \partial_t\big(\Q^\F[\epsilon\zeta]w\big)= \mu(\Q_2^\F[h_2]+\gamma \Q_1^\F[h_1])\partial^\alpha \partial_t w-\mu\big(\dd \Q_2^\F[h_2](w)-\gamma \dd \Q_1^\F[h_1](w)\big)(\epsilon \partial^\alpha \partial_x w ) + r_{3}^{(\alpha)},
\end{equation}
with
\begin{equation}\label{r3-est} 
 \norm{ r_{3}^{(\alpha)} }_{(\Y^0)^{\star}}\leq C(\m,h_0^{-1},E^{N}(U))\times E^{|\alpha|}(U)^{1/2}\times (\epsilon + \Upsilon_\F^{1/2} \norm{w}_{\Z^1}) .
 \end{equation}
\medskip

The other contribution is treated similarly. We define $\R^\F[\epsilon\zeta,w]=\R_2^\F[h_2,w]-\gamma \R_1^\F[h_1,w]$ with
\begin{equation}\label{def-Ri}
 \R_i^\F[h_i, w]\eqdef \frac13 w h_i^{-2}\partial_x \F_i^\mu\big\{h_i^3\partial_x \F_i^\mu \{h_i^{-1}w\}\big\}+\frac12 \big(h_i\partial_x \F_i^\mu\{h_i^{-1}w\}\big)^2,
 \end{equation}
\begin{align}\label{def-d1Ri}
\dd_1\R_i^\F[h_i,w]\bullet &\eqdef -\frac23 w h_i^{-3} \partial_x \F_i^\mu\big\{h_i^3\partial_x \F_i^\mu (h_i^{-1}w)\big\} \times \bullet\\
&\quad + w h_i^{-2}\partial_x \F_i^\mu\big\{h_i^2 \partial_x \F_i^\mu \{h_i^{-1}w\}\times\bullet\big\}
-\frac13 w h_i^{-2}\partial_x \F_i^\mu\big\{h_i^3\partial_x \F_i^\mu \{h_i^{-2}w \bullet \}\big\}\nn\\
&\quad +\big(h_i\partial_x \F_i^\mu\{h_i^{-1}w\}\big)\times\Big(\big(\partial_x \F_i^\mu\{h_i^{-1}w\}\big)\times\bullet-\big(h_i\partial_x \F_i^\mu\{h_i^{-2}w \times \bullet\}\big)\Big),\nn
\end{align}
\begin{multline}\label{def-d2Ri}
\dd_2\R_i^\F[h_i,w]\bullet \eqdef \frac13 \Big( h_i^{-2}\partial_x \F_i^\mu\big\{h_i^3\partial_x \F_i^\mu \{h_i^{-1}w\}\big\}\Big)\times\bullet+\frac13 w h_i^{-2}\partial_x \F_i^\mu\big\{h_i^3\partial_x \F_i^\mu \{h_i^{-1}\times\bullet \}\big\}\\
+\big(h_i\partial_x \F_i^\mu\{h_i^{-1}w\}\big)\times \big(h_i\partial_x \F_i^\mu\{h_i^{-1}\times \bullet \}\big).
\end{multline}
It follows
\begin{multline}\label{r4-def}
\mu\epsilon\partial^\alpha \partial_x\big(\R^\F[\epsilon\zeta,w]\big)=  \mu\epsilon^2\partial_x \Big(\big(\dd_1\R_2^\F[h_2,w]+\gamma\dd_1\R_1^\F[h_1,w]\big)\partial^\alpha \zeta\Big)\\
+\mu\epsilon\partial_x \Big(\big(\dd_2\R_2^\F[h_2,w]-\gamma\dd_2\R_1^\F[h_1,w]\big)\partial^\alpha w\Big)+ r_4^{(\alpha)},
\end{multline}
where $ r_4^{(\alpha)}$ may be estimated similarly as $r_3^{(\alpha)}$ above:
\begin{equation}\label{r4-est} 
 \norm{ r_4^{(\alpha)} }_{(\Y^0)^{\star}}\leq C(\m,h_0^{-1},E^{N}(U)) \times E^{|\alpha|}(U)^{1/2}\times (\epsilon + \Upsilon_\F \norm{w}_{\Z^1}^2).
 \end{equation}
\newpage

\noindent{\em Contribution from the surface tension term,} $\frac{\gamma+\delta}{\Bo}\partial_x ^2\Big(\frac1{\sqrt{1+\mu\epsilon^2|\partial_x\zeta|^2}}\partial_x\zeta\Big)$.

Let us denote $s(\partial_x\zeta)=\frac1{\sqrt{1+\mu\epsilon^2|\partial_x\zeta|^2}}\partial_x\zeta$ and notice $\partial s= \frac1{(1+\mu\epsilon^2|\partial_x\zeta|^2)^{3/2}} \partial\partial_x\zeta$. It follows
\begin{equation}\label{r5-def}
\partial^\alpha \partial_x^2 s
= \partial_x^2\left(\frac1{(1+\mu\epsilon^2|\partial_x\zeta|^2)^{3/2}} \partial^\alpha \partial_x\zeta\right)
-\sum_{j=1}^2 3\alpha_j \partial_x^2\left(\frac{\mu\epsilon^2(\partial_x\zeta)(\partial^{\e_j}\partial_x\zeta)}{(1+\mu\epsilon^2|\partial_x\zeta|^2)^{5/2}} \partial^{\alpha-\e_j} \partial_x\zeta\right) + r_5^{(\alpha)},
\end{equation}
with 
\[ r_5^{(\alpha)}=\sum_{k=1}^{N+1} \frac{(\mu\epsilon^2)^k}{(1+\mu\epsilon^2|\partial_x\zeta|^2)^{k+3/2}}\sum_{(\beta_j)} C^{(\beta_j)} r_k^{(\beta_j)}, \qquad r_k^{(\beta_j)}\eqdef \prod_{j=1}^{2k+1} \partial^{\beta_j} \partial_x \zeta,\]
where for any $k\in \{1,\dots,N+1\}$, $(\beta_j)$ is a $2k+1$-uple such that for all $ j\in\{1,\dots,2k+1\}$, one has
\[ 0\leq |\beta_1|\leq \dots\leq |\beta_{2k+1}|\leq |\alpha|\leq N\quad \text{ and } \quad \sum_{j=1}^{2k+1} \beta_j=\alpha +(0,2),\]
and $C_{(\beta_j)}$ is a constant.

Assume first that $|\beta_{2k+1}|=N$. Then for any $j\in\{1,\dots,k\}$, $|\beta_j|\leq 2$. It follows
\[ \frac1{\Bo}\norm{r_k^{(\beta_j)}}_{L^2} \leq \frac1{\Bo^{1/2}} C(\norm{\partial_x\zeta}_{W^{2,\infty}}) \norm{\partial^{\beta_{2k+1}}\zeta}_{\X^0}\leq \frac1{\Bo^{1/2}} C(\norm{\zeta}_{H^4}) \norm{\zeta}_{\X^N} .\]

Now, if $|\beta_{2k+1}|=N-1$, then either $|\beta_{2k}|=3$ and $|\beta_j|=0$ for any $j\leq 2k-1$, or $|\beta_j|\leq 2$ for any $j\leq 2k$. The latter case is estimated as above, while in the former case, one has
\[ \frac1{\Bo}\norm{r_k^{(\beta_j)}}_{L^2} \leq \frac1{\Bo^{1/2}} C(\norm{\partial_x\zeta}_{W^{2,\infty}}) \norm{\partial^{\beta_{2k}} \zeta}_{\W^0} \norm{\partial^{\beta_{2k+1}} \partial_x \zeta}_{L^2} \leq \frac1{\Bo^{1/2}} C(\norm{\zeta}_{\X^4}) \norm{\zeta}_{\X^N}.\]

Otherwise, one has $|\beta_{j}|\leq N-2$ for any $j\in\{1,\dots,2k+1\}$, and in that case,
\[ \frac1{\Bo}\norm{r_5^{(\beta_j)}}_{L^2} \leq \frac1{\Bo} C(\norm{\zeta}_{\X^N})\norm{\zeta}_{\X^N} .\]

Altogether, this yields for $N\geq 4$
\begin{equation}\label{r5-est}
 \frac1\Bo\norm{r_5^{(\alpha)}}_{L^2}\leq \frac{\mu\epsilon^2}{\Bo^{1/2}} C(\Bo^{-1},\mu\epsilon^2,\norm{\zeta}_{\X^N}) \norm{\zeta}_{\X^N} .
 \end{equation}
\medskip

Finally, we notice that 
\[ \partial_x\check{\mathfrak{a}}_\alpha \zeta^{\langle\check\alpha\rangle}\eqdef \frac{\gamma+\delta}{\Bo}\sum_{j=1}^2 3\alpha_j \partial_x^2\left(\frac{\mu\epsilon^2(\partial_x\zeta)(\partial^{\e_j}\partial_x\zeta)}{(1+\mu\epsilon^2|\partial_x\zeta|^2)^{5/2}} \partial^{\alpha-\e_j} \partial_x\zeta\right) 
\]
may be estimated, when $1\leq|\alpha|\leq N-1$, as
\begin{equation}\label{checka-est}
\norm{\partial_x\check{\mathfrak{a}}_\alpha \zeta^{\langle\check\alpha\rangle}}_{L^2} \leq \mu\epsilon^2 C(\Bo^{-1},\mu\epsilon^2,\norm{\zeta}_{\X^N}) \norm{\zeta}_{\X^{|\alpha|}} \qquad ( 1\leq|\alpha|\leq N-1).
\end{equation}

The definition of the operators $\mathfrak{a},\mathfrak{b},\mathfrak{c},\check{\mathfrak{a}}_\alpha^{(\alpha)},r^{(\alpha)}$ and estimate~\eqref{r-est} follows from~\eqref{r1-def}-\eqref{r1-est}, \eqref{r2-def}-\eqref{r2-est}, \eqref{r3-def}-\eqref{r3-est}, \eqref{r4-def}-\eqref{r4-est}, \eqref{r5-def}-\eqref{r5-est} as well as~\eqref{checka-est} when $1\leq|\alpha|\leq N-1$.

Estimate~\eqref{rdiff-est} is obtained identically, using in particular the trivial estimates
\[ \norm{H(\epsilon\zeta_1)-H(\epsilon\zeta_2)}_{L^\infty} \leq H_{1,h_0} \epsilon\norm{\zeta_1-\zeta_2}_{L^\infty}, \quad \norm{H(\epsilon\zeta_1)-H(\epsilon\zeta_2)}_{L^2} \leq H_{1,h_0} \epsilon\norm{\zeta_1-\zeta_2}_{L^2},\]
and
\[ \norm{\widehat{H(\epsilon\zeta_1)}-\widehat{H(\epsilon\zeta_2)}}_{L^1} \leq \norm{H(\epsilon\zeta_1)-H(\epsilon\zeta_2)}_{H^1}\leq C(H_{2,h_0},\norm{\epsilon\zeta_1}_{W^{1,\infty}}, \norm{\epsilon\zeta_2}_{W^{1,\infty}}) \epsilon\norm{\zeta_1-\zeta_2}_{H^1},\]
where $H$ and $ H_{1,h_0} $ are as in Lemma~\ref{L.f/h}. This concludes the proof of Lemma~\ref{L.linearize}.
\end{proof}

\subsection{Preliminary results}\label{S.preliminary}
In this section, we prove that the operator $\mathfrak a[\epsilon\zeta,\epsilon w]$ (resp. $\mathfrak b[\epsilon\zeta]$), introduced in Lemma~\ref{L.linearize}, is symmetric, continuous and coercive with respect to the space $\X^0$ (resp. $\Y^0$), provided that some conditions are satisfied by $(\epsilon\zeta,\epsilon w)$. These requirements can be seen as sufficient conditions for the hyperbolicity of the system, and permit to control the energy solutions to the quasilinear system for positive times (Section~\ref{S.energy}), and eventually prove the well-posedness of our system (Section~\ref{S.WP-conclusion}).

 \begin{Lemma}\label{L.hyperbolicity} Let $(\zeta, w)^\top\in H_x^{{3}}\times \Z^1$ be such that $\epsilon\zeta$ satisfies~\eqref{C.depth} with $h_0>0$. Then one has ${\mathfrak{a}[\epsilon\zeta,\epsilon w] \in \mathcal L(\X^0;(\X^0)^\star)}$, $\mathfrak{b}[\epsilon\zeta]\in \mathcal L(\Y^0;(\Y^0)^\star)$ and $\mathfrak{c}[\epsilon\zeta,\epsilon w]\in \mathcal L(\Y^0;(\Y^0)^\star)$. 
 Moreover, there exists $K_0,K_1=C(\m ,h_0^{-1},\epsilon \norm{\zeta}_{H_x^{{3}}})$ such that
 \begin{align*}
 &\forall f,g \in \X^0, \qquad &\norm{\langle \mathfrak{a}[\epsilon\zeta,\epsilon w] f\ , \ g\rangle_{(\X^0)^{\star}}}&\leq K_1 (1+\Upsilon_\F \norm{w}_{\Z^1}^2 ) \norm{f}_{\X^0}\norm{g}_{\X^0}\\
 &\forall f,g \in \Y^0, \qquad &\norm{\langle \mathfrak{b}[\epsilon\zeta] f\ , \ g\rangle_{(\Y^0)^{\star}}}&\leq K_1\norm{f}_{\Y^0}\norm{g}_{\Y^0}\\
 &\forall f,g \in \Y^0, \qquad & \norm{\langle \mathfrak{c}[\epsilon\zeta,\epsilon w] f\ , \ g\rangle_{(\Y^0)^{\star}}}&\leq \epsilon K_1 \norm{w}_{\Z^1} \norm{f}_{\Y^0}\norm{g}_{\Y^0},\\
 &\forall f \in \Y^0, \qquad &\langle \mathfrak{b}[\epsilon\zeta] f\ , \ f\rangle_{(\Y^0)^{\star}}&\geq \frac1{K_0}\norm{f}_{\Y^0}^2.
 \end{align*}
 
 Assume additionally that there exists $k_0>0$ such that
 \begin{equation}\label{C.hyp0}
 (\gamma+\delta)-\epsilon^2\max_{x\in\RR}\left\{(h_2^{-3} +\gamma h_1^{-3}) |w|^2\right\}\geq k_0>0.
 \end{equation} 
 Then there exists $K,K_0'=C(\m ,h_0^{-1},k_0^{-1},\epsilon \norm{\zeta}_{H_x^{{3}}})$ such that if
 \begin{equation}\label{C.hyperbolicity}
\Upsilon_\F \norm{w}_{\Z^1}^2 \leq K^{-1} , 
 \end{equation}
 then 
\[
 \forall f \in \X^0, \qquad \langle \mathfrak{a}[\epsilon\zeta,\epsilon w] f\ , \ f\rangle_{(\X^0)^{\star}}\geq \frac1{K_0'} \norm{f}_{\X^0}^2.
\]
 \end{Lemma}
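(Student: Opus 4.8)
The plan is to prove each claimed property of the three operators $\mathfrak{a},\mathfrak{b},\mathfrak{c}$ in turn, starting from their explicit expressions given in Lemma~\ref{L.linearize}. Each operator is a sum of a multiplication operator by an $L^\infty$ function (possibly built from $\zeta$, $w$ and their Fourier-multiplied derivatives), plus --- for $\mathfrak{a}$ --- the surface tension term $-\tfrac{\gamma+\delta}{\Bo}\partial_x\big((1+\mu\epsilon^2|\partial_x\zeta|^2)^{-3/2}\partial_x\bullet\big)$, and --- for $\mathfrak{b}$ --- the dispersive term $\mu(\Q_2^\F[h_2]+\gamma\Q_1^\F[h_1])$. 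The unifying observation is that each $\X^0$- or $\Y^0$-inner product of these operators against test functions should, after integrations by parts, be rewritten as a sum of terms each of which is bounded by a product of the relevant norms of $f$, $g$, times an $L^\infty$-type coefficient controlled via Lemma~\ref{L.f/h} and the non-cavitation hypothesis~\eqref{C.depth}; symmetry is read off directly from the symmetric bilinear form obtained after integration by parts.

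\emph{Continuity estimates.} First I would treat $\mathfrak{b}[\epsilon\zeta]$: pairing with $g\in\Y^0$, the zeroth-order piece $\tfrac{h_1+\gamma h_2}{h_1h_2}$ is handled by the last estimate of Lemma~\ref{L.f/h} applied in $\Y^0$; for the dispersive piece $\mu\Q_i^\F[h_i]\bullet=-\tfrac{\mu}3 h_i^{-1}\partial_x\F_i^\mu\{h_i^3\partial_x\F_i^\mu\{h_i^{-1}\bullet\}\}$, I integrate by parts to obtain $\tfrac\mu3\big(h_i^3\partial_x\F_i^\mu\{h_i^{-1}f\},\partial_x\F_i^\mu\{h_i^{-1}g\}\big)_{L^2}$, which is manifestly symmetric and, using $\sqrt\mu\|\partial_x\F_i^\mu(h_i^{-1}f)\|_{L^2}\leq C\|h_i^{-1}f\|_{\Y^0}\leq C(\m,h_0^{-1},\epsilon\|\zeta\|_{H^{3}_x})\|f\|_{\Y^0}$ (Lemma~\ref{L.f/h}), bounded by $K_1\|f\|_{\Y^0}\|g\|_{\Y^0}$. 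For $\mathfrak{a}[\epsilon\zeta,\epsilon w]$: the coefficient $(\gamma+\delta)-\epsilon^2\tfrac{h_1^3+\gamma h_2^3}{(h_1h_2)^3}|w|^2$ is in $L^\infty$ with norm $\lesssim 1+\epsilon^2\|w\|_{L^\infty}^2\lesssim 1+\epsilon^2\|w\|_{\Z^1}^2$; the terms $\mu\epsilon^2\,\dd_1\R_i^\F[h_i,w]$, expanding~\eqref{def-d1Ri}, integrating by parts where a $\partial_x\F_i^\mu$ falls on a test function, and using Lemma~\ref{L.f/h} together with $\sqrt\mu\|\partial_x\F_i^\mu(h_i^{-1}w)\|_{L^\infty}\lesssim\|w\|_{\Z^1}\cdot C(\m,h_0^{-1})$, produce a bound $\lesssim \mu\epsilon^2\|w\|_{\Z^1}^2$; finally the surface tension term, after integration by parts, is $\tfrac{\gamma+\delta}{\Bo}\big((1+\mu\epsilon^2|\partial_x\zeta|^2)^{-3/2}\partial_x f,\partial_x g\big)_{L^2}\leq \tfrac{\gamma+\delta}{\Bo}\|\partial_x f\|_{L^2}\|\partial_x g\|_{L^2}\leq K_1\|f\|_{\X^0}\|g\|_{\X^0}$. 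Combining and noting $\mu\epsilon^2\leq\Upsilon_\F$ gives the stated $K_1(1+\Upsilon_\F\|w\|_{\Z^1}^2)$. The bound on $\mathfrak{c}$ is analogous but every term carries an extra factor $\epsilon w$ (or $\epsilon$ times a $w$-dependent coefficient), yielding the factor $\epsilon K_1\|w\|_{\Z^1}$.

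\emph{Coercivity of $\mathfrak{b}$ and then $\mathfrak{a}$.} For $\mathfrak{b}$, $\langle\mathfrak{b}[\epsilon\zeta]f,f\rangle=\int\tfrac{h_1+\gamma h_2}{h_1h_2}|f|^2+\sum_i\tfrac\mu3\gamma^{2-i}\|h_i^{3/2}\partial_x\F_i^\mu\{h_i^{-1}f\}\|_{L^2}^2$; the first term is $\geq h_0\cdot(\gamma+1)/(\text{upper bound})\,\|f\|_{L^2}^2$ since $h_1+\gamma h_2$ and $h_1h_2$ are bounded above and below by~\eqref{C.depth}, and for the dispersive part I would write $\sqrt\mu\partial_x\F_i^\mu f = \sqrt\mu\partial_x\F_i^\mu\{h_i\cdot(h_i^{-1}f)\}$, use the product estimate~\eqref{prod-Y-0}/Lemma~\ref{L.f/h} in the form $\sqrt\mu\|\partial_x\F_i^\mu f\|_{L^2}\leq C(\m,h_0^{-1},\epsilon\|\zeta\|_{H^{3}_x})\big(\|h_i^{3/2}\partial_x\F_i^\mu\{h_i^{-1}f\}\|_{L^2}+\|f\|_{L^2}\big)$, and absorb, recovering a lower bound $\tfrac1{K_0}\big(\|f\|_{L^2}^2+\mu\gamma\|\partial_x\F_1^\mu f\|_{L^2}^2+\mu\|\partial_x\F_2^\mu f\|_{L^2}^2\big)=\tfrac1{K_0}\|f\|_{\Y^0}^2$. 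For $\mathfrak{a}$: from $\langle\mathfrak{a}f,f\rangle = \int\big((\gamma+\delta)-\epsilon^2\tfrac{h_1^3+\gamma h_2^3}{(h_1h_2)^3}|w|^2\big)|f|^2 + \tfrac{\gamma+\delta}{\Bo}\int(1+\mu\epsilon^2|\partial_x\zeta|^2)^{-3/2}|\partial_x f|^2 - \mu\epsilon^2\langle(\dd_1\R_2^\F+\gamma\dd_1\R_1^\F)f,f\rangle$, the first integral is $\geq k_0\|f\|_{L^2}^2$ by~\eqref{C.hyp0}, the second is $\geq \tfrac{\gamma+\delta}{\Bo}(1+\mu\epsilon^2\|\partial_x\zeta\|_{L^\infty}^2)^{-3/2}\|\partial_x f\|_{L^2}^2\geq\tfrac1{K_0'}\tfrac1\Bo\|\partial_x f\|_{L^2}^2$, and the last is bounded in absolute value by $K_1\mu\epsilon^2\|w\|_{\Z^1}^2\|f\|_{\X^0}^2\leq K_1\Upsilon_\F\|w\|_{\Z^1}^2\|f\|_{\X^0}^2$ by the continuity estimate already obtained; choosing $K$ so that $K_1\Upsilon_\F\|w\|_{\Z^1}^2\leq\tfrac1{2K_0'}$ under~\eqref{C.hyperbolicity} lets us absorb this into the positive part and conclude $\langle\mathfrak{a}f,f\rangle\geq\tfrac1{K_0'}\|f\|_{\X^0}^2$ after relabeling constants.

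I expect the main obstacle to be the dispersive/nonlocal terms, specifically handling $\dd_1\R_i^\F[h_i,w]$ and the $\Q_i^\F$ part of $\mathfrak{b}$ without losing a power of $\mu$: one must carefully distribute $\F_i^\mu$ and $\partial_x$ across the products so that each $\sqrt\mu\partial_x\F_i^\mu$ lands on a factor controlled in $\Y^0$ or $\Z^1$, and one must use the sub-additivity of $|k|\F_i(k)$ (Lemma~\ref{L.produit}) rather than naive $H^s$-algebra estimates, since $\sqrt\mu\partial_x\F_i^\mu$ is only bounded $\Y^0\to L^2$ and not $L^2\to L^2$ uniformly in $\mu$ (except in the regularized case $\varsigma=1$). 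A secondary delicate point is making the absorption argument in the coercivity of $\mathfrak{a}$ clean: one needs the constant $K$ in~\eqref{C.hyperbolicity} to depend only on $\m$, $h_0^{-1}$, $k_0^{-1}$ and $\epsilon\|\zeta\|_{H^{3}_x}$ and not circularly on $K_0'$, which is arranged by first fixing the continuity constant $K_1$ and the lower-bound constant from the gravity and capillary terms, then defining $K:=2K_0' K_1$ (with $K_0'$ the not-yet-absorbed lower-bound constant) so that~\eqref{C.hyperbolicity} exactly buys the factor $\tfrac12$.
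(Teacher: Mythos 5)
Your overall strategy is the same as the paper's: integrate by parts to exhibit the symmetric quadratic forms, get continuity from the product/Schauder estimates of Lemmata~\ref{L.produit} and~\ref{L.f/h}, prove coercivity of $\mathfrak b$ by bounding $\sqrt\mu\norm{\partial_x\F_i^\mu f}_{L^2}$ through the factorization $f=h_i\,(h_i^{-1}f)$, and prove coercivity of $\mathfrak a$ by combining~\eqref{C.hyp0}, the explicit capillary term, and an absorption of the $\dd_1\R_i^\F$ contributions under~\eqref{C.hyperbolicity}. The final inequalities you state coincide with those of the paper, and your remarks on sub-additivity and on the non-circular choice of $K$ are in line with the actual proof.

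There is, however, one step that is false as written, and it sits exactly at the heart of the lemma: you claim that the $\dd_1\R$-contribution is bounded by $K_1\,\mu\epsilon^2\norm{w}_{\Z^1}^2\norm{f}_{\X^0}\norm{g}_{\X^0}$ (and likewise $K_1\,\mu\epsilon^2\norm{w}_{\Z^1}^2\norm{f}_{\X^0}^2$ in the coercivity argument), and then conclude via $\mu\epsilon^2\lesssim\Upsilon_\F$. This intermediate bound cannot hold in general: the terms of $\dd_1\R_i^\F[h_i,w]$ involve $\sqrt\mu\,\partial_x\F_i^\mu$ acting on expressions containing $f$, and --- as you yourself note later --- $\sqrt\mu\,\partial_x\F_i^\mu$ is only bounded from $\Y^0$ (not from $\X^0$, nor uniformly on $L^2$) unless $\varsigma=1$. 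For instance with $\F_i\equiv1$ and $f$ oscillating at frequency $\sim\sqrt{\Bo}$, a term such as $\mu\epsilon^2\big(h_i^3\partial_x\{h_i^{-2}wf\},\partial_x\{h_i^{-2}wf\}\big)_{L^2}$ is of size $\mu\Bo\,\epsilon^2\norm{w}_{L^\infty}^2\norm{f}_{\X^0}^2$, exceeding your claimed bound by a factor $\mu\Bo$. The correct route --- which is the one the paper takes --- is to first bound these terms by $\epsilon^2\,C(\m,h_0^{-1},\epsilon\norm{\zeta}_{H_x^3})\norm{w}_{\Z^1}^2\norm{f}_{\Y^0}^2$ using Lemmata~\ref{L.produit} and~\ref{L.f/h}, and only then convert $\norm{f}_{\Y^0}^2\leq\big(1+(\gamma K_{\F_1}+K_{\F_2})(\mu\Bo)^{1-\varsigma}\big)\norm{f}_{\X^0}^2$ by Lemma~\ref{L.embedding}; it is precisely this embedding that produces the factor $(\mu\Bo)^{1-\varsigma}$ in $\Upsilon_\F$, i.e. the quantitative content of the hyperbolicity condition~\eqref{C.hyperbolicity} and the distinction between the original ($\varsigma=0$), improved ($\varsigma=1/2$) and regularized ($\varsigma=1$) models. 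Since you never invoke Lemma~\ref{L.embedding} and instead treat $\Upsilon_\F$ as a mere upper bound for $\mu\epsilon^2$, this is a genuine gap in the justification of both the continuity constant $K_1(1+\Upsilon_\F\norm{w}_{\Z^1}^2)$ and the absorption step for the coercivity of $\mathfrak a$; the fix is exactly the two-step bound just described, after which your argument matches the paper's proof.
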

 \begin{proof}
 We establish each result for $f,g\in \S(\RR)$ so that all the terms are obviously well-defined and in particular the $(X^{\star}-X)$ duality product (with $X=\X^0$ or $\Y^0$) coincides with the $L^2$ scalar product; the result for $f,g\in \X^0$ or $ \Y^0$ is then obtained by density of $\S(\RR)$ in $\X^0$ and $\Y^0$, and continuous linear extension.
 
 One has, after integration by parts,
 \begin{multline} \label{b-quad} 
 \langle \mathfrak{b}[\epsilon\zeta] f\ , \ g\rangle_{(\Y^0)^{\star}} \ =\ \big( \mathfrak{b}[\epsilon\zeta]f , g \big)_{L^2} \\
 =\ \int_\RR \frac{h_1+\gamma h_2}{h_1 h_2}f g +\frac\mu3 h_2^3 (\partial_x \F_2\{h_2^{-1}f\} )(\partial_x \F_2\{h_2^{-1}g\})+\frac{\mu\gamma}{3} h_1^3 (\partial_x \F_1\{h_1^{-1} f\})(\partial_x \F_1\{h_1^{-1} g\} )\ \dd x.
\end{multline}
 It follows easily 
 \[ \norm{\langle \mathfrak{b}[\epsilon\zeta] f\ , \ g\rangle_{(\Y^0)^{\star}}}\leq K_1\norm{f}_{\Y^0}\norm{g}_{\Y^0}.\]

We write again for the coercivity inequality, 
 \[ \langle \mathfrak{b}[\epsilon\zeta] f\ , \ f\rangle_{(\Y^0)^{\star}} = \int_\RR \frac{h_1+\gamma h_2}{h_1 h_2}|f|^2 +\frac\mu3 h_2^3 |\partial_x \F_2\{h_2^{-1}f\} |^2+\frac{\mu\gamma}{3} h_1^3 |\partial_x \F_1\{h_1^{-1} f\} |^2\ \dd x.\]
 It follows immediately, since $\epsilon\zeta$ satisfies~\eqref{C.depth},
 \[ \langle \mathfrak{b}[\epsilon\zeta] f\ , \ f\rangle_{(\Y^0)^{\star}}\geq \frac{1+\gamma}{1+\delta^{-1}} \norm{f}_{L^2}^2 + \frac{\mu h_0^3}{3 }\norm{\partial_x \F_2 \{h_2^{-1}f\} }_{L^2}^2 + \frac{\mu\gamma h_0^3 }{3}\norm{\partial_x \F_1\{h_1^{-1}f\} }_{L^2}^2 .\]
 Now, by Lemma~\ref{L.produit}, one has
\[\norm{\partial_x \F_1f }_{L^2} = \norm{\partial_x \F_1 \{(1+\epsilon\zeta) h_1^{-1} f\} }_{L^2}
 \leq (1+\epsilon\norm{\zeta}_{\Z^0})\norm{\partial_x \F_1 \{h_1^{-1} f\} }_{L^2} ,
\]
 and similarly for $\norm{\partial_x \F_2 \{h_2^{-1}f\} }_{L^2}^2$. We conclude
 \[ \norm{f}_{\Y^0}^2 \ \leq \ C(\m,h_0^{-1},\epsilon\norm{\zeta}_{\Z^0}) \times \langle \mathfrak{b}[\epsilon\zeta] f\ , \ f\rangle_{(\Y^0)^{\star}}.\]
 
 By similar argumentation, one easily shows that the operator $\mathfrak{c}[\epsilon\zeta,\epsilon w]$ is well-defined and continuous from $\Y^0$ to $(\Y^0)^{\star}$, and satisfies the third estimate of the statement.
 \medskip
 
 We show now the coercivity of $\mathfrak{a}[\epsilon\zeta,\epsilon w]$ under additional assumption~\eqref{C.hyp0}. We write
 \begin{multline*}
 \langle \mathfrak{a}[\epsilon\zeta,\epsilon w] f\ , \ f\rangle_{(\X^0)^{\star}}\ =\ \big( \mathfrak{a}[\epsilon\zeta,\epsilon w] f , f\big)_{L^2} \\
 = \int_\RR \Big((\gamma+\delta)-\epsilon^2\dfrac{h_1^3 +\gamma h_2^3 }{(h_1 h_2)^3} |w|^2 \Big)|f|^2 + \frac{\gamma+\delta}\Bo \frac{|\partial_x f|^2}{(1+\mu\epsilon|\partial_x\zeta|^2)^{3/2}} \dd x+\mu\epsilon^2 (R_2-\gamma R_1),
 \end{multline*}
 with
 \begin{multline*}R_i = \left(\Big(h_i\big(\partial_x \F_i^\mu\{h_i^{-1}w\}\big)^2 -\frac23 w h_i^{-3} \partial_x \F_i^\mu\big\{h_i^3\partial_x \F_i^\mu \{h_i^{-1}w\}\big\} \Big)\times f,f \right)_{L^2}\\
 +\frac13 \Big( h_i^3\partial_x \F_i^\mu \{h_i^{-2}w f \},\partial_x \F_i^\mu \{h_i^{-2}w f \}\Big)_{L^2}
 -2\Big( \partial_x \F_i^\mu\big\{(h_i^{-2}w) \times f\big\} \ , \ \big(h_i^2\partial_x \F_i^\mu\{h_i^{-1}w\}\big)\times f\Big)_{L^2}.
 \end{multline*}
 Using Cauchy-Schwarz inequality and Lemmata~\ref{L.produit}, and~\ref{L.f/h}, one has the following estimate
 \begin{align*}\mu\epsilon^2 |R_2-\gamma R_1|&\leq \epsilon^2 \norm{w}_{\Z^1}^2 C(\m ,h_0^{-1},\epsilon \norm{\zeta}_{H_x^{{3}}}) \norm{f}_{\Y^0}^2\\
 &\leq \Upsilon_\F\norm{w}_{\Z^1}^2 C(\m ,h_0^{-1},\epsilon \norm{\zeta}_{H_x^{{3}}}) \norm{f}_{\X^0}^2,
 \end{align*}
 where the last identity follows from Lemma~\ref{L.embedding}.
 
From~\eqref{C.hyp0}, one has immediately
 \[ \langle \mathfrak{a}[\epsilon\zeta,\epsilon w] f\ , \ f\rangle_{(\X^0)^{\star}} \ - \ \mu\epsilon^2 (R_2-\gamma R_1) \geq \min\left\{k_0, \frac{\gamma+\delta}{(1+\mu\epsilon\norm{\partial_x\zeta}_{L^\infty}^2)^{3/2}} \right\} \ \times\ \norm{f}_{\X^0}^2 .\]
 The existence of $K_0',K$ such that~\eqref{C.hyperbolicity} implies 
 \[ \norm{f}_{\X^0}^2 \leq K_0' \langle \mathfrak{a}[\epsilon\zeta,\epsilon w] f\ , \ f\rangle_{(\X^0)^{\star}} \]
 is now straightforward.
 
One shows similarly that $\mathfrak{a}[\epsilon\zeta,\epsilon w]:\X^0\to (\X^0)^{\star}$ is well-defined and continuous, and satisfies the first estimate of the statement. This concludes the proof of Lemma~\ref{L.hyperbolicity}.
 \end{proof}
 
 The following Lemma is a direct consequence of Lemma~\ref{L.hyperbolicity}.
 \begin{Lemma}\label{L.invertible}
 Let $(\zeta, w)^\top\in H_x^{{3}}\times \Z^1$ be such that $\epsilon\zeta$ satisfies~\eqref{C.depth}. Then $\mathfrak{b}[\epsilon\zeta]:\Y^0\to (\Y^0)^{\star}$ is a topological isomorphism with: 
 \[ \forall f\in \Y^0, \qquad \norm{(\mathfrak{b}[\epsilon\zeta])^{-1} f}_{\Y^0} \ \leq \ K_0 \norm{f}_{(\Y^0)^{\star}},\]
 with $K_0$ as in Lemma~\ref{L.hyperbolicity}.
 
 If, additionally, $(\epsilon\zeta, \epsilon w)$ satisfies~\eqref{C.hyp0}-\eqref{C.hyperbolicity}, then $\mathfrak{a}[\epsilon\zeta,\epsilon w]:\X^0\to (\X^0)^{\star}$ is a topological isomorphism with:
 \[ \forall f\in \X^0, \qquad \norm{(\mathfrak{a}[\epsilon\zeta,\epsilon w])^{-1} f}_{\X^0} \ \leq \ K_0' \norm{f}_{(\X^0)^{\star}},\]
 with $K_0'$ as in Lemma~\ref{L.hyperbolicity}.
 \end{Lemma}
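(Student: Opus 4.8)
The plan is to recognize Lemma~\ref{L.invertible} as a direct consequence of the Lax--Milgram theorem applied in the Hilbert spaces $\Y^0$ and $\X^0$. Recall from Appendix~\ref{S.functional} that $\Y^0$ and $\X^0$ are, by construction, Hilbert spaces (completions of $\S(\RR)$ for the Hilbertian norms $\norm{\cdot}_{\Y^0}$ and $\norm{\cdot}_{\X^0}$), and that $(\Y^0)^\star$, $(\X^0)^\star$ are their topological duals with the standard dual norm. By Lemma~\ref{L.hyperbolicity}, the bilinear form $(f,g)\mapsto \langle\mathfrak{b}[\epsilon\zeta]f,g\rangle_{(\Y^0)^\star}$ is continuous on $\Y^0\times\Y^0$ and coercive, $\langle\mathfrak{b}[\epsilon\zeta]f,f\rangle_{(\Y^0)^\star}\geq \tfrac1{K_0}\norm{f}_{\Y^0}^2$; hence Lax--Milgram yields that $f\mapsto \mathfrak{b}[\epsilon\zeta]f$ is a topological isomorphism from $\Y^0$ onto $(\Y^0)^\star$.

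For the quantitative bound on the inverse: given $g\in(\Y^0)^\star$, set $f\eqdef(\mathfrak{b}[\epsilon\zeta])^{-1}g\in\Y^0$. Then $\tfrac1{K_0}\norm{f}_{\Y^0}^2\leq \langle\mathfrak{b}[\epsilon\zeta]f,f\rangle_{(\Y^0)^\star}=\langle g,f\rangle_{(\Y^0)^\star}\leq \norm{g}_{(\Y^0)^\star}\norm{f}_{\Y^0}$, and dividing by $\norm{f}_{\Y^0}$ (the case $f=0$ being trivial) gives $\norm{(\mathfrak{b}[\epsilon\zeta])^{-1}g}_{\Y^0}\leq K_0\norm{g}_{(\Y^0)^\star}$, as claimed.

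The argument for $\mathfrak{a}[\epsilon\zeta,\epsilon w]$ is identical: under the additional assumptions~\eqref{C.hyp0}--\eqref{C.hyperbolicity}, Lemma~\ref{L.hyperbolicity} provides continuity on $\X^0\times\X^0$ together with the coercivity estimate $\langle\mathfrak{a}[\epsilon\zeta,\epsilon w]f,f\rangle_{(\X^0)^\star}\geq \tfrac1{K_0'}\norm{f}_{\X^0}^2$, so Lax--Milgram again produces a topological isomorphism $\X^0\to(\X^0)^\star$, and the same one-line computation as above yields $\norm{(\mathfrak{a}[\epsilon\zeta,\epsilon w])^{-1}g}_{\X^0}\leq K_0'\norm{g}_{(\X^0)^\star}$.

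There is essentially no obstacle here: the entire content has been pushed into Lemma~\ref{L.hyperbolicity}. One may note that, since both operators are in addition symmetric (as visible from the explicit quadratic forms, e.g.~\eqref{b-quad}), one could equally invoke the Riesz representation theorem on $\Y^0$, resp.\ $\X^0$, equipped with the equivalent inner product induced by $\mathfrak{b}[\epsilon\zeta]$, resp.\ $\mathfrak{a}[\epsilon\zeta,\epsilon w]$; symmetry is, however, not needed for the isomorphism statement. The only point worth a word is that the duality brackets appearing in Lemma~\ref{L.hyperbolicity} are genuinely those of the Hilbert-space duals $(\Y^0)^\star$ and $(\X^0)^\star$ defined in Appendix~\ref{S.functional}, so that Lax--Milgram applies verbatim with no further identification required.
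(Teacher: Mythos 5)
Your proposal is correct and follows the same route as the paper: invoke Lemma~\ref{L.hyperbolicity} for continuity and coercivity, apply the (operator form of the) Lax--Milgram theorem to get the isomorphism, and then bound the inverse by the same one-line coercivity computation. The remark about Riesz representation via symmetry is a harmless aside; nothing further is needed.
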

 \begin{proof} By Lemma~\ref{L.hyperbolicity}, $\mathfrak{b}[\epsilon\zeta]:\Y^0\to (\Y^0)^{\star}$ is well-defined, continuous and coercive. We then deduce by the operator version of Lax-Milgram theorem that $\mathfrak{b}[\epsilon\zeta]$ is an isomorphism from $\Y^0$ onto $(\Y^0)^{\star}.$ The continuity of the inverse follows from the continuity and coercivity of $\mathfrak{b}[\epsilon\zeta]$: 
 \[ \norm{\mathfrak{b}[\epsilon\zeta]^{-1}f}_{\Y^0}^2 \leq K_0 \langle \mathfrak{b}[\epsilon\zeta] \mathfrak{b}[\epsilon\zeta]^{-1}f\ , \ \mathfrak{b}[\epsilon\zeta]^{-1}f\rangle_{(\Y^0)^{\star}}\leq K_0\norm{f}_{(\Y^0)^{\star}} \norm{\mathfrak{b}[\epsilon\zeta]^{-1}f}_{\Y^0}.\]
 The whole discussion is identical for $\mathfrak{a}[\epsilon\zeta,\epsilon w]$, replacing $\Y^0$ with $\X^0$, and $K_0$ with $K_0'$.
 \end{proof}
 
 We conclude this section with the following result.
 \begin{Lemma}\label{L.symmetric}
 Let $(\zeta, w)^\top\in H_x^{{3}}\times \Z^1$ be such that $\epsilon\zeta$ satisfies~\eqref{C.depth}. Then the operator $\mathfrak{a}[\epsilon\zeta,\epsilon w]:\X^0\to(\X^0)^{\star}$ is symmetric: 
 \[ \forall f,g\in \X^0, \qquad \langle \mathfrak{a}[\epsilon\zeta,\epsilon w] f\ , \ g\rangle_{(\X^0)^{\star}} \ = \ \langle \mathfrak{a}[\epsilon\zeta,\epsilon w] g\ , \ f\rangle_{(\X^0)^{\star}}.\]
The same result holds true for $\mathfrak b[\epsilon\zeta]$ and $\mathfrak c[\epsilon\zeta,\epsilon w]$, replacing $\X^0$ with $\Y^0$.
 \end{Lemma}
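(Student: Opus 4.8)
The plan is to verify symmetry by inspecting the explicit formulas for the three operators given in Lemma~\ref{L.linearize} and observing that, after an integration by parts, each reduces to a manifestly symmetric bilinear form. As in the proofs of Lemmata~\ref{L.hyperbolicity} and~\ref{L.invertible}, I would first establish the identities for $f,g\in\S(\RR)$, where all the terms are classically well-defined and the duality brackets coincide with the $L^2$ inner product, and then extend to general $f,g\in\X^0$ (resp. $\Y^0$) by density and bilinear continuity --- the continuity of the forms being exactly the first three estimates of Lemma~\ref{L.hyperbolicity}.

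For $\mathfrak b[\epsilon\zeta]$ the symmetry is immediate from~\eqref{b-quad}: integrating by parts once moves $\partial_x\F_i^\mu$ onto the second factor, yielding
\[ \big(\mathfrak{b}[\epsilon\zeta]f,g\big)_{L^2} = \int_\RR \frac{h_1+\gamma h_2}{h_1 h_2}fg +\tfrac\mu3 h_2^3(\partial_x\F_2^\mu\{h_2^{-1}f\})(\partial_x\F_2^\mu\{h_2^{-1}g\}) +\tfrac{\mu\gamma}3 h_1^3(\partial_x\F_1^\mu\{h_1^{-1}f\})(\partial_x\F_1^\mu\{h_1^{-1}g\})\,\dd x, \]
which is visibly symmetric in $f,g$ (here one uses that $\F_i^\mu$ is itself a symmetric operator on $L^2$, since $\F_i$ is even and real-valued). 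For $\mathfrak a[\epsilon\zeta,\epsilon w]$ I would treat the three contributions separately. The zeroth-order multiplication term $\big((\gamma+\delta)-\epsilon^2\frac{h_1^3+\gamma h_2^3}{(h_1h_2)^3}|w|^2\big)\times\bullet$ is pointwise multiplication, hence symmetric. The surface-tension term $-\frac{\gamma+\delta}{\Bo}\partial_x\big((1+\mu\epsilon^2|\partial_x\zeta|^2)^{-3/2}\partial_x\bullet\big)$ becomes, after one integration by parts, $\frac{\gamma+\delta}{\Bo}\int(1+\mu\epsilon^2|\partial_x\zeta|^2)^{-3/2}(\partial_x f)(\partial_x g)\,\dd x$, again symmetric. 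The genuinely non-trivial piece is $-\mu\epsilon^2(\dd_1\R_2^\F[h_2,w]+\gamma\,\dd_1\R_1^\F[h_1,w])\bullet$; here I would use the explicit expression~\eqref{def-d1Ri} for $\dd_1\R_i^\F$, integrate by parts to rewrite $\big(\dd_1\R_i^\F[h_i,w]f,g\big)_{L^2}$ in a form where each term pairs $\F_i^\mu$-images of $(h_i^{-2}w)f$-type and $(h_i^{-1}\cdot)$-type quantities symmetrically (exactly the manipulation already carried out for $R_i$ in the coercivity proof of Lemma~\ref{L.hyperbolicity}, specialized to $g=f$, now kept bilinear), concluding symmetry term by term.

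The argument for $\mathfrak c[\epsilon\zeta,\epsilon w]$ on $\Y^0$ is the same in spirit: the first summand $2\epsilon\frac{h_1^2-\gamma h_2^2}{(h_1h_2)^2}w\times\bullet$ is multiplication, and the two operator-valued summands $-\mu\epsilon(\dd\Q_2^\F[h_2](w)-\gamma\,\dd\Q_1^\F[h_1](w))\bullet$ and $-\mu\epsilon(\dd_2\R_2^\F[h_2,w]-\gamma\,\dd_2\R_1^\F[h_1,w])\bullet$ are handled by integrating by parts using~\eqref{def-dQi} and~\eqref{def-d2Ri} and recognizing the symmetric pairing structure. The main obstacle is purely bookkeeping: each of $\dd\Q_i^\F$, $\dd_1\R_i^\F$, $\dd_2\R_i^\F$ is a sum of three or four terms built from nested compositions $\partial_x\F_i^\mu$, and one must check that the integration-by-parts-symmetrized forms of the full sums coincide --- individual terms swap into each other rather than being self-symmetric. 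Since $\F_i^\mu$ is an even real Fourier multiplier, $(\F_i^\mu u,v)_{L^2}=(u,\F_i^\mu v)_{L^2}$ and $(\partial_x u,v)_{L^2}=-(u,\partial_x v)_{L^2}$, so all the moves are legitimate on $\S(\RR)$; the density/continuity extension then closes the proof without further difficulty.
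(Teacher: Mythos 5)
Your proposal is correct and follows essentially the same route as the paper: reduce to $L^2$ pairings on Schwartz functions, use the skew-symmetry of $\partial_x\F_i^\mu$ (with $\F_i$ even and real) to integrate by parts and exhibit manifestly symmetric bilinear forms, then extend to $\X^0$ (resp.\ $\Y^0$) by density and the continuity bounds of Lemma~\ref{L.hyperbolicity}. The paper's proof performs the identical computation by rewriting $\dd_1\R_i^\F[h_i,w]$ and the combination $\dd\Q_i^\F[h_i](w)+\dd_2\R_i^\F[h_i,w]$ in manifestly symmetric form; note, consistently with your remark that ``individual terms swap into each other rather than being self-symmetric'', that only this combined operator (not $\dd\Q_i^\F$ or $\dd_2\R_i^\F$ separately) is symmetric, which is precisely why the two operator-valued summands of $\mathfrak c$ must be grouped together.
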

 \begin{proof}The symmetry property for $\mathfrak b[\epsilon\zeta]$ is straightforwardly seen from~\eqref{b-quad}. The other operators require a slight rewriting. In particular, notice
 \begin{multline*}\dd_1\R_i^\F[h_i,w]\bullet = \Big(h_i\big(\partial_x \F_i^\mu\{h_i^{-1}w\}\big)^2 -\frac23 w h_i^{-3} \partial_x \F_i^\mu\big\{h_i^3\partial_x \F_i^\mu \{h_i^{-1}w\}\big\}\Big) \times \bullet \\
 + (h_i^{-2}w)\times\partial_x \F_i^\mu\big\{ \big(h_i^2 \partial_x \F_i^\mu \{h_i^{-1}w\}\big)\times\bullet\big\}-\big(h_i^2\partial_x \F_i^\mu\{h_i^{-1}w\}\big)\times \partial_x \F_i^\mu\big\{(h_i^{-2}w) \times \bullet\big\}\\
 -\frac13 (h_i^{-2} w) \partial_x \F_i^\mu\big\{h_i^3\partial_x \F_i^\mu \{(h_i^{-2}w) \bullet \}\big\}
 \end{multline*}
 and
 \begin{multline*} \big(\dd \Q_i^\F[h_i](w)+ \dd_2\R_i^\F[h_i,w]\big)\bullet =\Big(\frac23 h_i^{-2}\partial_x \F_i^\mu\big\{h_i^3 \partial_x \F_i^\mu\{h_i^{-1}w\}\big\}\Big)\times \bullet \\-h_i^{-1}\partial_x \F_i^\mu\big\{\big(h_i^2 \partial_x \F_i^\mu\{h_i^{-1}w\}\big) \times \bullet\big\} 
 +\big(h_i^2 \partial_x \F_i^\mu\{h_i^{-1}w\}\big) \times \partial_x \F_i^\mu\{ h_i^{-1}\times \bullet \} \\
 +\frac13 h_i^{-1}\times \partial_x \F_i^\mu\big\{h_i^3 \partial_x \F_i^\mu\{(h_i^{-2}w)\times\bullet \}\big\}
 +\frac13 (h_i^{-2}w)\times \partial_x \F_i^\mu\big\{h_i^3\partial_x \F_i^\mu \{h_i^{-1}\times\bullet \}\big\}
 \end{multline*}
 are obviously symmetric, since $\partial_x \F_i^\mu$ is skew-symmetric. The result is now clear.
 \end{proof}

\subsection{A priori estimates}\label{S.energy}
We now consider the quasi-linearized system arising from Lemma~\ref{L.linearize}:
\begin{equation}\label{linearized}
\left\{ \begin{array}{l}
\displaystyle\partial_{ t}\dot{\zeta} \ + \ \partial_x \dot w \ =\ r^1, \\ \\ 
\mathfrak{b} \partial_{ t} \dot w \ + \ \partial_x \mathfrak{a} \dot{\zeta} \ + \ \partial_x \check{\mathfrak{a}}_\alpha \check{\zeta} \ + \ \mathfrak{c} \partial_x \dot w\ = \ r^2,
\end{array}
\right.
\end{equation}
where we denote for conciseness $\mathfrak{a}=\mathfrak{a}[\epsilon\zeta,\epsilon w]$ (and similarly for $\check{\mathfrak{a}}_\alpha,\mathfrak b,\mathfrak c$), as defined in Lemma~\ref{L.linearize}, and $r^1,r^2$ are remainder terms to be precised. More accurately, we introduce a regularized version of~\eqref{linearized}. Denote $ {\sf J}_\nu =(1-\nu\partial_x^2)^{-1/2}$ and consider
\begin{equation}\label{linearized-mollified}
\left\{ \begin{array}{l}
\displaystyle\partial_{ t}\dot{\zeta} \ + \ {\sf J}_\nu^2 \partial_x \dot w \ =\ r^1, \\ \\ 
\mathfrak{b}\partial_{ t} \dot w \ + \ {\sf J}_\nu^2\partial_x \mathfrak{a} \dot{\zeta} \ + \ {\sf J}_\nu^2\partial_x \check{\mathfrak{a}}_{\alpha} \check{\zeta} \ + \ {\sf J}_\nu \mathfrak{c} {\sf J}_\nu \partial_x \dot w \ = \ r^2.
\end{array}
\right.
\end{equation}
We obtain below a uniform a priori control of the energy of any solution, and then, in Lemma~\ref{L.energy-estimate-diff}, a similar estimate on the difference between two solutions.
\begin{Lemma}\label{L.energy-estimate} Let ${\dot U\eqdef (\dot\zeta,\dot w)^\top,\ U\eqdef (\zeta, w)^\top\in L^\infty([0,T];\X^4\times\Y^4)}$, $\check\zeta\in L^\infty([0,T];(\X^1)^2)$ and $r=(r^1,r^2)^\top\in L^1([0,T);\X^0\times(\Y^0)^\star)$ satisfying~\eqref{linearized-mollified} with $\nu\in[0,1]$.
 Assume moreover that $U(t)$ satisfies\eqref{C.depth},\eqref{C.hyp0} and~\eqref{C.hyperbolicity} with $h_0^{-1},k_0^{-1},K^{-1}$ uniformly for $t\in[0,T]$. Then one has
\[ E^0(\dot U)^{1/2} \ \leq \ {\bf C}_0 \big(E^0(\dot U\id{t=0})^{1/2} +(\mu\epsilon^2)\Norm{\check\zeta}_{L^\infty([0,T];(\X^0)^2)} \big) e^{\lambda t} \ + \ {\bf C}_0\int_0^t f(t') e^{\lambda (t-t')}\dd t', \]
with 
\[ \lambda \ = \ {\bf C}_0 \times \Big(\epsilon+\Upsilon_\F \Norm{w}_{L^\infty([0,T];\Z^2)}^2 \Big) ,\ \quad f(t)=\norm{r}_{\X^0\times(\Y^0)^\star}+\mu\epsilon^2 \norm{\check\zeta}_{(\X^1)^2} \]
and ${\bf C}_0=C(\m,h_0^{-1},k_0^{-1},K,\Norm{U}_{L^\infty([0,T];\X^4\times\Y^4)})$.
\end{Lemma}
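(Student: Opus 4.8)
The plan is to prove the bound by a space–time $L^2$ energy argument on the regularized block system~\eqref{linearized-mollified}, the two structural ingredients being that the mollifier ${\sf J}_\nu$ (with $\nu>0$) turns the otherwise merely formal cancellation of the highest–order coupling term into a rigorous one, and that the unknowns $\dot\zeta,\dot w$ are naturally controlled in the distinct spaces $\X^0,\Y^0$. I would work with the symmetrizer energy
\[
\mathcal E(t)\ \eqdef\ \tfrac12\big\langle \mathfrak a[\epsilon\zeta,\epsilon w]\,\dot\zeta,\dot\zeta\big\rangle_{(\X^0)^\star}\ +\ \tfrac12\big\langle \mathfrak b[\epsilon\zeta]\,\dot w,\dot w\big\rangle_{(\Y^0)^\star},
\]
which by the continuity and coercivity estimates of Lemma~\ref{L.hyperbolicity} (available since $U(t)$ satisfies~\eqref{C.depth},~\eqref{C.hyp0},~\eqref{C.hyperbolicity}) is equivalent to $E^0(\dot U)$ with constants of the form ${\bf C}_0$, so that it suffices to control $\mathcal E$.

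Next I would differentiate $\mathcal E$ in time (legitimate because, for fixed $\nu>0$,~\eqref{linearized-mollified} is an ODE in a Banach space, or else by a regularization argument), use the symmetry of $\mathfrak a,\mathfrak b,\mathfrak c$ from Lemma~\ref{L.symmetric}, and substitute $\partial_t\dot\zeta$ and $\mathfrak b\,\partial_t\dot w$ from~\eqref{linearized-mollified}. The two terms carrying the singular surface–tension operator — $-\langle\mathfrak a\,\dot\zeta,{\sf J}_\nu^2\partial_x\dot w\rangle$ from the first equation and $-({\sf J}_\nu^2\partial_x\mathfrak a\,\dot\zeta,\dot w)$ from the second — cancel exactly after one integration by parts (here $\nu>0$ is exactly what makes both well-defined). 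Similarly, setting $g={\sf J}_\nu\dot w$, one has $-({\sf J}_\nu\mathfrak c\,{\sf J}_\nu\partial_x\dot w,\dot w)=-(\mathfrak c\,\partial_x g,g)=\tfrac12((\partial_x\mathfrak c)g,g)$, which replaces the unbounded factor $\partial_x$ by a commutator. What remains is
\[
\tfrac{d}{dt}\mathcal E\ =\ \langle\mathfrak a\,r^1,\dot\zeta\rangle+\langle r^2,\dot w\rangle+\tfrac12\langle(\partial_t\mathfrak a)\dot\zeta,\dot\zeta\rangle+\tfrac12\langle(\partial_t\mathfrak b)\dot w,\dot w\rangle+\tfrac12((\partial_x\mathfrak c)g,g)-({\sf J}_\nu^2\partial_x\check{\mathfrak a}_\alpha\check\zeta,\dot w).
\]
The first two terms are $\le {\bf C}_0\,f(t)\,\mathcal E^{1/2}$ by the continuity bounds of Lemma~\ref{L.hyperbolicity} together with~\eqref{C.hyperbolicity}; the commutator/transport terms $\partial_t\mathfrak a,\partial_t\mathfrak b,\partial_x\mathfrak c$ are controlled, using $\partial_t\zeta=-\partial_x w$, the control of $U$ in $\X^4\times\Y^4$, and Lemmata~\ref{L.produit},~\ref{L.f/h},~\ref{L.embedding} (the last, with the definition~\eqref{def-Upsilon} of $\Upsilon_\F$, being what assembles the factor $\Upsilon_\F$ from the dispersive parts), by $\lambda\,\mathcal E$ with $\lambda={\bf C}_0(\epsilon+\Upsilon_\F\Norm{w}_{L^\infty([0,T];\Z^2)}^2)$.

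The term $-({\sf J}_\nu^2\partial_x\check{\mathfrak a}_\alpha\check\zeta,\dot w)$ cannot be estimated pointwise in time, since $\check{\mathfrak a}_\alpha$ carries the operator $\tfrac1\Bo\partial_x^2$ and is, in the space variable, of the same order as $\dot\zeta$ itself, so it does not lie in $(\Y^0)^\star$ uniformly in $\nu$. I would integrate the energy identity in time, use the first equation to write ${\sf J}_\nu^2\partial_x\dot w=r^1-\partial_t\dot\zeta$, so that $-\int_0^t({\sf J}_\nu^2\partial_x\check{\mathfrak a}_\alpha\check\zeta,\dot w)\,ds=\int_0^t(\check{\mathfrak a}_\alpha\check\zeta,r^1)\,ds-\int_0^t(\check{\mathfrak a}_\alpha\check\zeta,\partial_s\dot\zeta)\,ds$, and then integrate the last integral by parts in time,
\[
-\int_0^t(\check{\mathfrak a}_\alpha\check\zeta,\partial_s\dot\zeta)\,ds\ =\ -\big[(\check{\mathfrak a}_\alpha\check\zeta,\dot\zeta)\big]_0^t+\int_0^t\big((\partial_s\check{\mathfrak a}_\alpha)\check\zeta+\check{\mathfrak a}_\alpha\,\partial_s\check\zeta,\dot\zeta\big)\,ds.
\]
Moving one space derivative off $\check{\mathfrak a}_\alpha$ and using the $\tfrac1\Bo$–weight in the $\X^0$ norm, the boundary term is $\lesssim\mu\epsilon^2\norm{\check\zeta}_{\X^0}\mathcal E^{1/2}$ (hence, after Young's inequality, it absorbs a fraction of $\mathcal E(t)$ and produces the $\mu\epsilon^2\Norm{\check\zeta}_{L^\infty([0,T];(\X^0)^2)}$ contribution and its value at $t=0$), the $(\partial_s\check{\mathfrak a}_\alpha)\check\zeta$ term is of the same type, the $\check{\mathfrak a}_\alpha\,\partial_s\check\zeta$ term costs one extra time derivative and is $\lesssim\mu\epsilon^2\norm{\check\zeta}_{\X^1}\mathcal E^{1/2}$ (feeding the $\mu\epsilon^2\norm{\check\zeta}_{(\X^1)^2}$ piece of $f$), and $(\check{\mathfrak a}_\alpha\check\zeta,r^1)$ is $\lesssim\mu\epsilon^2\norm{\check\zeta}_{\X^0}\norm{r^1}_{\X^0}$, absorbed likewise. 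Collecting everything yields $\tfrac{d}{dt}\mathcal E\le {\bf C}_0\lambda\,\mathcal E+{\bf C}_0\,f(t)\,\mathcal E^{1/2}$ up to the boundary contributions, and a Gronwall argument for $\sqrt{\mathcal E}$ (if $y'\le\lambda y+g\sqrt y$ then $\sqrt{y(t)}\le\sqrt{y(0)}e^{\lambda t/2}+\tfrac12\int_0^t g\,e^{\lambda(t-s)/2}ds$), followed by the equivalence $\mathcal E\simeq E^0(\dot U)$, gives the stated estimate (the factor $\tfrac12$ in the exponent being absorbed into $\lambda$ or ${\bf C}_0$).

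I expect the main difficulty to be precisely the treatment of the $\check{\mathfrak a}_\alpha\check\zeta$ term: one has to arrange the time–integration–by–parts so that it yields exactly the advertised norms — the $L^\infty([0,T];\X^0)$ one from the boundary term and the $\X^1$ one from the remaining integral — and check, via Lemmata~\ref{L.produit}–\ref{L.f/h}, that its coefficient is genuinely $\O(\mu\epsilon^2)$. The other delicate point is the careful accounting, through Lemmata~\ref{L.produit},~\ref{L.f/h},~\ref{L.embedding} and the definition of $\Upsilon_\F$, of the $\mu$, $\epsilon$ and $\Norm{w}_{\Z^k}$ dependence entering $\lambda$ when differentiating the dispersive parts of $\mathfrak a,\mathfrak b,\mathfrak c$.
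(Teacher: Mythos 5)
Your proposal is correct and follows essentially the same route as the paper: the symmetrizer energy built from $\mathfrak a$ and $\mathfrak b$, the cancellation of the most singular coupling via symmetry of $\mathfrak a,\mathfrak b,\mathfrak c$ and skew-symmetry of ${\sf J}_\nu^2\partial_x$, commutator estimates through Lemmata~\ref{L.produit},~\ref{L.f/h},~\ref{L.embedding} assembling the factor $\Upsilon_\F\Norm{w}_{\Z^2}^2$ in $\lambda$, and a Gronwall argument on the square root of the energy. The only (algebraically equivalent) difference is the bookkeeping of the $\check{\mathfrak a}_\alpha\check\zeta$ term: the paper tests the first equation against $\mathfrak a\dot\zeta+\check{\mathfrak a}_\alpha\check\zeta$ and absorbs the cross term $\big(\dot\zeta,\check{\mathfrak a}_\alpha\check\zeta\big)_{L^2}$ into a modified energy via Young's inequality, whereas you recover exactly the same cross term by writing ${\sf J}_\nu^2\partial_x\dot w=r^1-\partial_t\dot\zeta$ and integrating by parts in time, with identical estimates for the boundary and time-differentiated contributions.
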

\begin{Remark}
The energy estimate is uniform with respect to $\nu \in[0,1]$, and holds in particular for 
solutions to the non-regularized system~\eqref{linearized}. 
\end{Remark}
\begin{proof}Since $U,\dot U\in L^\infty([0,T];\X^4\times\Y^4)$, all the components of equation~\eqref{linearized-mollified} are obviously well-defined in $L^2$.
We compute the $L^2$ inner product of the first equation with $\mathfrak{a}\dot\zeta+\check{\mathfrak{a}}_{\alpha}\check\zeta$, and add the $L^2$ inner product of the second equation with $\dot w$. Recalling that $\mathfrak{a},\mathfrak{b},\mathfrak{c}$ are symmetric (by Lemma~\ref{L.symmetric}), and since ${\sf J}_\nu$ is symmetric and $\partial_x$ is skew-symmetric, we obtain
after straightforward manipulations
\begin{multline}\label{energy-to-estimate}
\frac{\dd}{\dd t} \left(\frac12 \big(\mathfrak{a}\dot \zeta,\dot \zeta\big)_{L^2} + \big(\dot{\zeta} , \check{\mathfrak{a}}_{\alpha}\check\zeta\big)_{L^2} +\frac12\big(\mathfrak{b} \dot w,\dot w\big)_{L^2} \right) \ = \\ 
\frac12 \big(\big[\partial_t,\mathfrak{a}\big]\dot \zeta,\dot \zeta\big)_{L^2}
+\big(\dot{\zeta} , \partial_t(\check{\mathfrak{a}}_{\alpha}\check\zeta)\big)_{L^2} 
+\frac12 \big(\big[\partial_t,\mathfrak{b}\big]\dot w,\dot w\big)_{L^2}
+\frac12\big(\big[\partial_x,\mathfrak{c} \big]{\sf J}_\nu\dot w,{\sf J}_\nu\dot w\big)_{L^2}\\
+\big( r^1,\mathfrak{a}\dot\zeta+\check{\mathfrak{a}}_{\alpha}\check\zeta\big)_{L^2}+\big( r^2,\dot w\big)_{L^2} .
\end{multline}
We estimate below each of the components of the right-hand-side. These estimates follow from the product estimates of Section~\ref{S.product}, as in the proof of Lemma~\ref{L.linearize}. For the sake of conciseness, we do not detail all calculations but rather provide the precise estimates for each component.
\medskip

\noindent {\em $(I)\eqdef\big(\big[\partial_t,\mathfrak{a}\big]\dot \zeta,\dot \zeta\big)_{L^2}$.} One has, by definition,
\begin{multline*}
[\partial_t,\mathfrak{a}\big]\dot \zeta = -\epsilon^2 \dot\zeta \partial_t \big(G(\epsilon\zeta)|w|^2\big) -\mu\epsilon^2 \Big(\big[\partial_t,\dd_{1}\R_2^{\F}[h_2,w]\big]\dot{\zeta}-\gamma\big[\partial_t,\dd_{1}\R_1^{\F}[h_1,w]\big]\dot{\zeta}\Big)\\
-\frac{\gamma+\delta}{\Bo} \partial_x\Big(\partial_t\Big(\frac{1 }{(1+\mu\epsilon^2|\partial_x\zeta|^2)^{3/2}}\Big) \partial_x\dot{\zeta}\Big),
\end{multline*}
where $G(\epsilon\zeta)=\dfrac{h_1^3 +\gamma h_2^3 }{(h_1 h_2)^3}$ and $\R_i^{\F}[h_i,w]$ is defined in~\eqref{def-Ri}.

The first contribution is easily estimated: 
\[ \left\vert\big(-\epsilon^2 \dot\zeta \partial_t \big(G(\epsilon\zeta)|w|^2\big) ,\dot \zeta\big)_{L^2}\right\vert \leq C(\m,h_0^{-1},\norm{\epsilon\zeta}_{W^{1,\infty}})\times \epsilon^2 \norm{w}_{W^{1,\infty}}^2 \norm{\dot{\zeta}}_{L^2}^2,\]
The third component is estimated after one integration by parts:
\[\left\vert\left(-\frac{\gamma+\delta}{\Bo} \partial_x\Big(\partial_t\big((1+\mu\epsilon^2|\partial_x\zeta|^2)^{-3/2}\big)\partial_x\dot{\zeta}\Big),\dot \zeta\right)_{L^2}\right\vert
\leq C(\m,\norm{\zeta}_{W^{2,\infty}})\times \mu\epsilon^2 \frac1{\Bo}\norm{ \partial_x \dot{\zeta}}_{L^2}^2.\]
Treating the last contribution is more involved, as $\big[\partial_t,\dd_{1}\R_i^{\F}[h_i,w]\big]$ is the sum of many terms.
 However, all of these terms may be dealt with as in the proof of Lemma~\ref{L.linearize}: using integration by parts if necessary, one may ensure that the operator $\partial_x\F^\mu_i$ applies only once to each $\dot \zeta$ and since much regularity is assumed on $\zeta\in \X^4$, Lemmata~\ref{L.produit},~\ref{L.f/h} and then Lemma~\ref{L.embedding} yield
\[
\left\vert\mu\epsilon^2 \gamma^{2-i} \big(\big[\partial_t,\dd_{1}\R_i^{\F}[h_i,w]\big]\dot{\zeta},\dot{\zeta}\big)_{L^2} \right\vert \leq C_0\times \epsilon^2 \vert w\vert_{Z^2_{\F^{\mu}}}^2 \vert \dot{\zeta}\vert_{\Y^0}^2  \leq C_0 \Upsilon_\F \vert w\vert_{Z^2_{\F^{\mu}}}^2 \vert \dot{\zeta}\vert_{\X^0}^2,
\]
with $C_0=C(\m,h_0^{-1},\norm{\zeta}_{\X^4})$. 
Altogether, we proved
\begin{equation}\label{estI}
 |(I)|\leq C(\m,h_0^{-1},\norm{\zeta}_{\X^4})\times \left(\mu\epsilon^2+\Upsilon_\F \norm{w}_{\Z^2}^2 \right)\norm{\dot\zeta}_{\X^0}^2 .
\end{equation}
\medskip

\noindent {\em $(II)\eqdef\big(\big[\partial_t,\mathfrak{b}\big]\dot w,\dot w\big)_{L^2}$ }. One has, by definition,

\[
\big[\partial_t,\mathfrak{b}\big]\dot w= \partial_t\Big(\frac{h_1+\gamma h_2}{h_1 h_2}\Big)\dot w \ +\ \mu\epsilon \big(\dd\Q_2^\F[h_2](\dot w)-\gamma \dd\Q_1^\F[h_1](\dot w) \big)\partial_t\zeta,
\]
where $\dd\Q_i^\F$ is defined in~\eqref{def-dQi}. The first term is estimated as
 \[ \left\vert \Big(\partial_t\Big(\frac{h_1+\gamma h_2}{h_1 h_2}\Big)\dot w,\dot w\Big)_{L^2} \right\vert \leq C(\m,h_0^{-1},\norm{\partial_t\zeta}_{L^\infty})\times \epsilon  \norm{\dot w}_{L^2}^2,\]
For the second term we have after integration by parts and by triangular inequality
\begin{multline*} \left\vert \Big( \dd\Q_i^\F[h_i](\dot w) \partial_t\zeta , \dot w \Big)_{L^2} \right\vert \leq 
 \frac23 \left\vert \big( h_i^3 \partial_x \F_i^\mu\{h_i^{-1}\dot w\}\ ,\ \partial_x \F_i^\mu\{ h_i^{-2} \partial_t\zeta \dot w \}\big)_{L^2} \right\vert\\+ \left\vert \big(h_i^2 (\partial_t\zeta) \partial_x \F_i^\mu\{h_i^{-1}\dot w\} , \partial_x \F_i^\mu\{ h_i^{-1} \dot w\}\big)_{L^2}\right\vert .
\end{multline*}
By Lemmata~\ref{L.produit} and~\ref{L.f/h}, one immediately deduces
\begin{equation}\label{estII}
 |(II)|\leq  C(\m,h_0^{-1},\norm{\zeta}_{\X^4})\times \epsilon \norm{\dot w}_{\Y^0}^2 .
\end{equation}
\medskip

\noindent {\em $(III)\eqdef\big(\big[\partial_x,\mathfrak{c} \big] {\sf J}_\nu\dot w,{\sf J}_\nu\dot w\big)_{L^2}$ }. 
One may proceed similarly as above,
and one obtains without any additional difficulty 
\begin{equation}\label{estIII}
 |(III)|\leq  C(\m,h_0^{-1},\norm{\zeta}_{\X^4},\norm{w}_{\Y^4}) \times \epsilon\norm{{\sf J}_\nu \dot w}_{\Y^0}^2 \leq  C(\m,h_0^{-1},\norm{\zeta}_{\X^4},\norm{w}_{\Y^4})\times \epsilon \norm{\dot w}_{\Y^0}^2 .
\end{equation}
\medskip

\noindent {\em $(IV)\eqdef\big(\dot{\zeta} , \partial_t(\check{\mathfrak{a}}_\alpha\check\zeta)\big)_{L^2}$ }. After one integration by parts, one has
\[ (IV)=-3\mu\epsilon^2 \frac{\gamma+\delta}\Bo \sum_{j\in\{1,2\}} \alpha_j \left(\partial_x\dot\zeta , \partial_t\left(\frac{(\partial_x\partial^{\e_j}\zeta)(\partial_x\zeta)(\partial_x\check\zeta_j)}{(1+\mu\epsilon^2|\partial_x\zeta|^2)^{5/2}} \right) \right)_{L^2}.\]
Recall $\check \zeta=(\check\zeta_0,\check\zeta_1)^\top\in \X^1\times \X^1$, so we easily deduce by Cauchy-Schwarz inequality
\begin{equation}\label{estIV}
 |(IV)|\leq  C(\m,\norm{\zeta}_{W^{3,\infty}}) \times \mu\epsilon^2 \norm{\dot\zeta}_{\X^0} \norm{\check\zeta}_{(\X^1)^2} .
\end{equation}
\medskip

\noindent {\em $(V)\eqdef\big( r^1,\mathfrak{a}\dot\zeta+\check{\mathfrak{a}}_{\alpha}\check\zeta\big)_{L^2}+\big( r^2,\dot w\big)_{L^2}$ }.
By Lemma~\ref{L.hyperbolicity}, one obtains
\begin{multline}\label{estV}
 |(V)|\leq C(\m,h_0^{-1},k_0^{-1},\epsilon\norm{\zeta}_{H^{{3}}_x})\norm{r^1}_{\X^0}\norm{\dot\zeta}_{\X^0} +\mu\epsilon^2 C(\m,\norm{\zeta}_{W^{2,\infty}}) \norm{r^1}_{\X^0}\norm{\dot\zeta}_{(\X^0)^2} \\+\norm{r^2}_{(\Y^0)^\star} \norm{\dot w}_{\Y^0}.
\end{multline}
\medskip

Altogether, plugging~\eqref{estI},\eqref{estII}\eqref{estIII},\eqref{estIV},\eqref{estV} into~\eqref{energy-to-estimate} yields
\begin{equation}\label{energy-estimated}
\frac{\dd}{\dd t} \left(\frac12 \big(\mathfrak{a}\dot \zeta,\dot \zeta\big)_{L^2} + \big(\dot{\zeta} , \check{\mathfrak{a}}_\alpha\check\zeta\big)_{L^2} +\frac12\big(\mathfrak{b} \dot w,\dot w\big)_{L^2} \right) \ \leq 
 C_0 \left(\epsilon+\Upsilon_\F \norm{w}_{\Z^2}^2 \right) E^0(\dot U) + C_0 C_1 E^0(\dot U)^{1/2},
\end{equation}
with $C_0=C(\m,h_0^{-1},E^4(U))$, $C_1=\norm{r}_{\X^0\times(\Y^0)^\star}+\mu\epsilon^2 \norm{\check\zeta}_{(\X^1)^2} $, and $E^0(\dot U)=\norm{\dot\zeta}_{\X^0}^2+\norm{\dot w}_{\Y^0}^2$.

By Lemma~\ref{L.hyperbolicity}, there exists $K_0,K_1=C(\m,h_0^{-1},k_0^{-1},K,E^4(U))$ such that
\begin{equation} \label{equivalence-energie}
\frac1{K_0} E^0(\dot U)\leq \frac12 \big(\mathfrak{a}\dot \zeta,\dot \zeta\big)_{L^2} +\frac12\big(\mathfrak{b} \dot w,\dot w\big)_{L^2}\leq K_1 E^0(\dot U).
\end{equation}
Let us now estimate
\[ \left|\big(\dot{\zeta} , \check{\mathfrak{a}}_\alpha\check\zeta\big)_{L^2}\right|\leq \mu\epsilon^2 C_2 \times  \norm{\dot\zeta}_{\X^0}\norm{\check\zeta}_{\X^0}\leq \frac{1}{2}\mu\epsilon^2 C_2 \times \big( M^{-1}\norm{\dot\zeta}_{\X^0}^2+M\norm{\check\zeta}_{\X^0}^2\big),\]
with $C_2=C(\m,\norm{\partial_x\zeta}_{W^{2,\infty}})$ and arbitrary $M>0$. 
Choosing $M=\mu\epsilon^2 C_2 K_0$,~\eqref{equivalence-energie} yields
\begin{equation}\label{equivalence-energie-modifiee}
\frac1{2 K_0} E^0(\dot U)-\widetilde {\bf M} \leq \widetilde E^0(\dot U)\leq (K_1+\frac1{2K_0}) E^0(\dot U)+\widetilde {\bf M},
\end{equation}
where we denoted
\[ \widetilde {\bf M}\eqdef \max_{t\in[0,T]} \left\{\frac12 K_0(\mu\epsilon^2 C_2)^2 \norm{\check\zeta}_{\X^0}^2\right\}, \quad \widetilde E^0(\dot U)\eqdef \frac12 \big(\mathfrak{a}\dot \zeta,\dot \zeta\big)_{L^2} + \big(\dot{\zeta} , \check{\mathfrak{a}}_\alpha\check\zeta\big)_{L^2} +\frac12\big(\mathfrak{b} \dot w,\dot w\big)_{L^2}.\]
The differential inequality~\eqref{energy-estimated} may therefore be reformulated as
\[\frac{\dd}{\dd t} (\widetilde E^0(\dot U) +\widetilde {\bf M})\leq 
 2 K_0C_0 \left(\epsilon+\Upsilon_\F \norm{w}_{\Z^2}^2 \right) \left(\widetilde E^0(\dot U) +\widetilde {\bf M}\right)
 + \sqrt{2K_0}C_0 C_1 \left(\widetilde E^0(\dot U)+\widetilde {\bf M}\right)^{1/2}.\]
We deduce
\[ \big(\widetilde E^0(\dot U)+\widetilde{\bf M}\big)^{1/2} \leq \big(\widetilde E^0(\dot U\id{t=0})+\widetilde{\bf M} \big)^{1/2}e^{\lambda t} + {\bf C}_0 \int_0^t C_1(t') e^{\lambda (t-t')}\dd t' ,\]
where $\lambda,{\bf C}_0$ are as in the statement of the Lemma. Using~\eqref{equivalence-energie-modifiee} and augmenting ${\bf C}_0$ if necessary, the energy estimate is now straightforward.
\end{proof}

\begin{Lemma}\label{L.energy-estimate-diff} Define two tuple of solutions to~\eqref{linearized}, $(\dot U_1,U_1,r_1)$ and $(\dot U_2,U_2,r_2)$, satisfying the same properties as in Lemma~\ref{L.energy-estimate} (with $\check\zeta_1=\check\zeta_2=0$).
Then one has
\[ E^0(\dot U_1-\dot U_2)^{1/2} \ \leq \ {\bf C}_0\ E^0(\dot U_1\id{t=0}-\dot U_2\id{t=0})^{1/2}e^{\lambda t} \ + \ {\bf C}_0\int_0^t f(t') e^{\lambda (t-t')}\dd t', \]
with 
\[ \lambda \ = \ {\bf C}_0 \times \Big(\epsilon+\Upsilon_\F \Norm{w_1}_{L^\infty([0,T];\Z^2)}^2 \Big) ,\ \quad f(t)=\norm{r_1-r_2}_{\X^0\times (\Y^0)^\star}+ \epsilon\norm{\dot U_2}_{(W^{3,\infty}_x)^2} \norm{U_1-U_2}_{\X^2\times\Y^2} \]
and ${\bf C}_0=C(\m,h_0^{-1},k_0^{-1},K,\Norm{U_1}_{L^\infty([0,T];\X^4\times\Y^4)},\Norm{U_2}_{L^\infty([0,T];\X^4\times\Y^4)})$.
\end{Lemma}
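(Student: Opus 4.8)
The plan is to reduce the difference estimate to the energy estimate of Lemma~\ref{L.energy-estimate} applied to the difference $\dot U_1 - \dot U_2$, treating the mismatch between the variable coefficients of the two systems as an extra forcing term. First I would write down the equations satisfied by $\dot U_1 - \dot U_2 \eqdef (\dot\zeta_1 - \dot\zeta_2, \dot w_1 - \dot w_2)^\top$. Subtracting the two copies of~\eqref{linearized} (with $\nu=0$, $\check\zeta_i = 0$) and adding and subtracting terms so that all the operators are evaluated at $(\epsilon\zeta_1,\epsilon w_1)$, one finds that $\dot U_1 - \dot U_2$ solves
\[
\left\{ \begin{array}{l}
\partial_t(\dot\zeta_1-\dot\zeta_2) + \partial_x(\dot w_1 - \dot w_2) = r_1^1 - r_1^2, \\ \\
\mathfrak b[\epsilon\zeta_1]\partial_t(\dot w_1 - \dot w_2) + \partial_x \mathfrak a[\epsilon\zeta_1,\epsilon w_1](\dot\zeta_1-\dot\zeta_2) + \mathfrak c[\epsilon\zeta_1,\epsilon w_1]\partial_x(\dot w_1 - \dot w_2) = r_1^2 - r_2^2 + \mathcal R,
\end{array}\right.
\]
where the extra remainder is
\[
\mathcal R \eqdef \big(\mathfrak b[\epsilon\zeta_2]-\mathfrak b[\epsilon\zeta_1]\big)\partial_t \dot w_2 + \partial_x\big(\mathfrak a[\epsilon\zeta_2,\epsilon w_2]-\mathfrak a[\epsilon\zeta_1,\epsilon w_1]\big)\dot\zeta_2 + \big(\mathfrak c[\epsilon\zeta_2,\epsilon w_2]-\mathfrak c[\epsilon\zeta_1,\epsilon w_1]\big)\partial_x \dot w_2.
\]
Since $U_1$ satisfies~\eqref{C.depth},\eqref{C.hyp0},\eqref{C.hyperbolicity}, the coefficient operators $\mathfrak a[\epsilon\zeta_1,\epsilon w_1]$, $\mathfrak b[\epsilon\zeta_1]$, $\mathfrak c[\epsilon\zeta_1,\epsilon w_1]$ are symmetric, continuous, and coercive in the appropriate spaces by Lemmata~\ref{L.hyperbolicity} and~\ref{L.symmetric}, so the hypotheses of Lemma~\ref{L.energy-estimate} are met for this system (with $\check\zeta = 0$).

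Next I would apply Lemma~\ref{L.energy-estimate} verbatim to this system: it yields
\[
E^0(\dot U_1 - \dot U_2)^{1/2} \leq {\bf C}_0\, E^0(\dot U_1\id{t=0}-\dot U_2\id{t=0})^{1/2}e^{\lambda t} + {\bf C}_0\int_0^t \tilde f(t') e^{\lambda(t-t')}\dd t',
\]
with $\lambda$ exactly as in the statement (it depends only on $\epsilon$, $\Upsilon_\F$, $\Norm{w_1}_{L^\infty(\Z^2)}$ and ${\bf C}_0$) and with $\tilde f(t) = \norm{r_1 - r_2}_{\X^0\times(\Y^0)^\star} + \norm{\mathcal R}_{(\Y^0)^\star}$. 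The only thing left is to bound $\norm{\mathcal R}_{(\Y^0)^\star}$ by $\epsilon\, {\bf C}_0\, \norm{\dot U_2}_{(W^{3,\infty}_x)^2}\norm{U_1 - U_2}_{\X^2\times\Y^2}$. This is where the actual work sits, but it is of a routine nature: each of the three differences of operators is, by the explicit formulas in Lemma~\ref{L.linearize} (and the subordinate formulas~\eqref{def-dQi},\eqref{def-d1Ri},\eqref{def-d2Ri}), a finite sum of terms each of which is linear in $(\zeta_1 - \zeta_2, w_1 - w_2)$ up to smooth-function-of-$\epsilon\zeta_i$ and $\partial_x\F_i^\mu$ factors, and carries an explicit prefactor $\epsilon$ (or $\epsilon^2$, or $\mu\epsilon^2$). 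One then uses the Schauder-type bounds of Lemma~\ref{L.f/h}, the product estimates of Lemma~\ref{L.produit}, the embedding Lemma~\ref{L.embedding} (to absorb $\Upsilon_\F$-weighted terms into $\X$-norms), together with the elementary Lipschitz estimates $\norm{H(\epsilon\zeta_1)-H(\epsilon\zeta_2)}_{L^\infty}\leq H_{1,h_0}\epsilon\norm{\zeta_1-\zeta_2}_{L^\infty}$ etc.\ already recorded at the end of the proof of Lemma~\ref{L.linearize}. Since $\dot U_2$ is only paired against these differences, it appears through at most three spatial derivatives in $L^\infty$, i.e.\ through $\norm{\dot U_2}_{(W^{3,\infty}_x)^2}$; and since the factors multiplying $U_1 - U_2$ involve at most two derivatives, $U_1 - U_2$ is controlled by $\norm{U_1-U_2}_{\X^2\times\Y^2}$ — hence $N\geq 4$ is enough for ${\bf C}_0$ to be finite, consistently with the appearance of $\Norm{U_1}_{L^\infty(\X^4\times\Y^4)}$, $\Norm{U_2}_{L^\infty(\X^4\times\Y^4)}$ in ${\bf C}_0$.

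The main obstacle, as in Lemma~\ref{L.linearize}, is bookkeeping: organizing the difference $\mathfrak a[\epsilon\zeta_2,\epsilon w_2]-\mathfrak a[\epsilon\zeta_1,\epsilon w_1]$ (and the analogous ones for $\mathfrak b,\mathfrak c$) so that each summand has the form (smooth function of $\epsilon\zeta_i$) $\times$ (at most one $\partial_x\F_i^\mu$ acting on each of $U_1-U_2$ and $\dot U_2$), integrating by parts inside the $(\Y^0)^\star$ duality pairing against the test function whenever a second $\partial_x\F_i^\mu$ would otherwise fall on the same factor — exactly the manipulation already performed for the terms $r_{3,i}^{(\beta_j)}$ in the proof of Lemma~\ref{L.linearize}. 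No new analytic idea is required; once this bound on $\norm{\mathcal R}_{(\Y^0)^\star}$ is in hand, the conclusion follows by substituting it into $\tilde f$ and reading off the claimed form of $f$. Finally, as in Remark after Lemma~\ref{L.energy-estimate}, the estimate is uniform in the mollification parameter and passes to the limit $\nu\to 0$, so it holds for genuine solutions of~\eqref{linearized}.
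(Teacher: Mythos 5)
Your proposal follows essentially the same route as the paper: subtract the two systems, keep the coefficient operators evaluated at $(\epsilon\zeta_1,\epsilon w_1)$, treat the operator differences applied to $\dot U_2$ as an extra remainder $r_{\rm diff}$, estimate its $(\Y^0)^\star$ norm by $\epsilon\,{\bf C}_0\norm{\dot U_2}_{(W^{3,\infty}_x)^2}\norm{U_1-U_2}_{\X^2\times\Y^2}$ via the product/Schauder/embedding lemmata and the Lipschitz bounds from Lemma~\ref{L.linearize}, and then invoke Lemma~\ref{L.energy-estimate} with $\nu=0$ and $\check\zeta=0$ (the paper likewise details only the hardest term $(\partial_x\mathfrak a_2-\partial_x\mathfrak a_1)\dot\zeta_2$). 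Aside from the harmless typo $r_1^1-r_1^2$ (which should read $r_1^1-r_2^1$), the argument is correct and matches the paper's proof.
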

\begin{proof}
The difference between the two solutions satisfies the system
\[
\left\{ \begin{array}{l}
\displaystyle\partial_{ t}(\dot{\zeta}_1-\dot{\zeta}_2) \ + \ \partial_x (\dot w_1-\dot w_2) \ =\ r^1_1-r^1_2, \\ \\ 
\mathfrak{b}_1 \partial_{ t} (\dot w_1-\dot w_2) \ + \ \partial_x \mathfrak{a}_1(\dot{\zeta}_1-\dot{\zeta}_2) \ + \ \mathfrak{c}_1 \partial_x (\dot w_1-\dot w_2)\ = \ r^2_1-r^2_2 + r_{\rm diff} ,
\end{array}
\right.
\]
where we denote $\mathfrak{a}_i=\mathfrak{a}[\epsilon\zeta_i,\epsilon w_i]$ (and similarly for $\mathfrak b_i,\mathfrak c_i$), and
\[ r_{\rm diff}\eqdef (\mathfrak{b}_2-\mathfrak{b}_1) \partial_{ t} \dot w_2+(\partial_x\mathfrak{a}_2-\partial_x\mathfrak{a}_1) \dot \zeta_2+(\mathfrak{c}_2-\mathfrak{c}_1) \partial_{ x} \dot w_2 \eqdef \sum_{i=1}^3r_{\rm diff}^{(i)}.\]
The Lemma is a straightforward consequence of Lemma~\ref{L.energy-estimate} (with $\nu=0$), once $r_{\rm diff}$ is estimated.
We focus on the most difficult term, namely $r_{\rm diff}^{(2)}=(\partial_x\mathfrak{a}_2-\partial_x\mathfrak{a}_1) \dot \zeta_2$.

Let $f\in \Y^0$. One has
\begin{multline*}\big(r_{\rm diff}^{(2)},f\big)_{L^2}=\int_{\RR} -\epsilon^2 f \partial_x \left(\Big(G(\epsilon\zeta_2)|w_2|^2-G(\epsilon\zeta_1) |w_1|^2 \Big)\dot\zeta_2\right)\\
-\mu\epsilon^2 f \partial_x\left( \big(\dd_1\R_2^\F[\epsilon\zeta_2,w_2]-\dd_1\R_2^\F[\epsilon\zeta_1,w_1]+\gamma\dd_1\R_1^\F[\epsilon\zeta_2,w_2]-\gamma\dd_1\R_1^\F[\epsilon\zeta_1,w_1]\big)\dot\zeta_2\right) \\
 +\frac{\gamma+\delta}\Bo f\partial_x^2\left( \left(\frac{ 1}{(1+\mu\epsilon^2|\partial_x\zeta_2|^2)^{3/2}} -\frac{ 1}{(1+\mu\epsilon^2|\partial_x\zeta_1|^2)^{3/2}}\right) \partial_x\dot\zeta_2\right) \end{multline*}
where $G(\epsilon\zeta)\eqdef\dfrac{h_1^3 +\gamma h_2^3 }{(h_1 h_2)^3}$.

Since $\norm{G(\epsilon\zeta_2-\epsilon\zeta_1)}_{L^2}\leq \epsilon\norm{\zeta_2-\zeta_1}_{L^2}\times \sup_{y\in[\epsilon\zeta_2,\epsilon\zeta_1]} G'(y)$, it is straightforward that 
\[ \norm{ \partial_x \left(\big(G(\epsilon\zeta_2)|w_2|^2-G(\epsilon\zeta_1) |w_1|^2 \big)\dot\zeta_2\right)}_{L^2}\leq C_0\big(\epsilon\norm{\zeta_1-\zeta_2}_{H^1}+\norm{w_1-w_2}_{H^1}\big)\norm{\dot\zeta_2}_{W^{1,\infty}_x} ,
\]
with $C_0=C(\m,h_0^{-1},\norm{\zeta_1}_{L^{\infty}},\norm{\zeta_2}_{L^{\infty}},\norm{w_1}_{L^{\infty}},\norm{w_2}_{L^{\infty}})$.

Similarly,
\begin{multline*} \frac1{\Bo^{-1}}\norm{\partial_x^2\left( \big((1+\mu\epsilon^2|\partial_x\zeta_2|^2)^{-3/2} -(1+\mu\epsilon^2|\partial_x\zeta_1|^2)^{-3/2}\big) \partial_x\dot\zeta_2\right) }_{L^2} \\
\leq \mu\epsilon^2 C_0\norm{\partial_x\zeta_1-\partial_x\zeta_2}_{\X^1}\norm{\partial_x\dot\zeta_2}_{W^{2,\infty}_x},
\end{multline*}
with $C_0=C(\m, \norm{\partial_x\zeta_1}_{W^{1,\infty}_x},\norm{\partial_x\zeta_2}_{W^{1,\infty}_x})$.

As for the last component, recall $\dd_1\R_i^\F$ is defined in~\eqref{def-Ri}. Proceeding as in the proof of Lemma~\ref{L.linearize}, we obtain
\begin{multline*} 
\gamma^{2-i} \mu\epsilon^2 \big\vert \big( \dd_1\R_i^\F[\epsilon\zeta_2,w_2]\dot\zeta_2-\dd_1\R_i^\F[\epsilon\zeta_1,w_1]\dot\zeta_2,f\big)_{L^2} \big\vert \\
 \leq \epsilon^2 C_0 \times \big(\norm{w_1-w_2}_{\Y^1}+\epsilon \norm{\zeta_1-\zeta_2}_{\Y^1}\big)\norm{\dot\zeta_2}_{\Z^1}\norm{f}_{\Y^0}
 \end{multline*}
 with $C_0=C(\m,h_0^{-1}, \norm{\zeta_1}_{H_x^{{3}}},\norm{\zeta_2}_{H_x^{{3}}},\norm{w_1}_{\Z^1},\norm{w_2}_{\Z^1})$.
 
 Altogether, we find
\[\norm{r_{\rm diff}^{(2)}}_{(\Y^0)^\star} \leq \epsilon C_0 \norm{\dot\zeta_2}_{W^{3,\infty}_x} \norm{U_2-U_1}_{\X^2\times\Y^2},\]
with $C_0=C(\m,h_0^{-1},k_0^{-1},K,\norm{U_1}_{\X^4\times\Y^4},\norm{U_2}_{\X^4\times\Y^4})$.

All the other terms in $r_{\rm diff}$ are estimated in the same way, and Lemma~\ref{L.energy-estimate-diff} now directly follows from Lemma~\ref{L.energy-estimate}.
\end{proof}

\subsection{Well-posedness results; proof of Theorem~\ref{T.WP}}\label{S.WP-conclusion}
In this section we conclude the proof of the main result of the paper, Theorem~\ref{T.WP}, namely the well-posedness of the Cauchy problem for system~\eqref{GN-w}. We first prove in Lemma~\ref{L.WP-linear} the existence and uniqueness of solutions of the linearized system~\eqref{linearized} for smooth data, and provide a uniform energy estimate. A solution of the nonlinear system~\eqref{GN-w} is then constructed using a Picard iteration scheme. Uniqueness, and continuous dependence with respect to the initial data follow from Lemma~\ref{L.energy-estimate-diff}.

\begin{Lemma}\label{L.WP-linear}
Let $\zeta,w,\check\zeta,r^1,r^2\in H^\infty([0,T]\times\RR)$ be such that~\eqref{C.depth},\eqref{C.hyp0},\eqref{C.hyperbolicity} hold. Then for any $\dot U^0\eqdef (\dot\zeta^0,\dot w^0)^\top\in H^\infty_x(\RR)^2$, there exists a unique solution $\dot U\eqdef (\dot\zeta,\dot w)^\top\in H^\infty([0,T]\times\RR)^2$ satisfying~\eqref{linearized} and $\dot U\id{t=0}=\dot U^0$.
\end{Lemma}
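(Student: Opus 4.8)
The plan is to solve the regularized system \eqref{linearized-mollified} with the mollification parameter $\nu>0$ fixed and positive, obtaining thereby a family of solutions $\dot U_\nu$, and then to pass to the limit $\nu\to 0$ using the uniform (in $\nu$) energy estimate provided by Lemma~\ref{L.energy-estimate} together with the remark following it. For fixed $\nu>0$, the operator $\mathsf{J}_\nu^2\partial_x$ and $\mathsf{J}_\nu\mathfrak{c}\mathsf{J}_\nu\partial_x$ are bounded operators on $L^2$ (indeed on every $H^s_x$), and $\mathfrak{b}[\epsilon\zeta]$ is, by Lemma~\ref{L.invertible}, a topological isomorphism of $\Y^0$ onto $(\Y^0)^\star$ with bounded inverse; applying $\mathfrak{b}^{-1}$ to the second equation rewrites \eqref{linearized-mollified} as a linear ODE in the Banach space $\X^0\times\Y^0$ (or in $H^s_x\times H^s_x$ for any $s$, since all the coefficients depend smoothly on $t$ and the mollified operators are smoothing),
\[ \partial_t \dot U \ = \ \mathcal{L}_\nu(t)\dot U \ + \ \mathcal{R}(t), \]
with $\mathcal{L}_\nu(t)$ a bounded, time-continuous family of operators and $\mathcal{R}(t)=(r^1,\mathfrak{b}^{-1}r^2)^\top$ a time-continuous forcing. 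Standard ODE theory in Banach spaces (Cauchy–Lipschitz, or the construction of the propagator by Picard iteration / the Cauchy product of Duhamel's formula) then gives, for each $\dot U^0\in H^\infty_x(\RR)^2$, a unique global-in-time solution $\dot U_\nu\in C^1([0,T];H^s_x)$ for every $s$, hence $\dot U_\nu\in H^\infty([0,T]\times\RR)^2$.

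Next I would record the crucial point that the energy estimate of Lemma~\ref{L.energy-estimate} holds \emph{uniformly in $\nu\in[0,1]$}: since $\zeta,w,\check\zeta,r^1,r^2$ are fixed here, the hypotheses \eqref{C.depth}, \eqref{C.hyp0}, \eqref{C.hyperbolicity} are in force with $\nu$-independent constants, and therefore
\[ \sup_{t\in[0,T]} E^0(\dot U_\nu(t))^{1/2} \ \leq \ {\bf C}_0\big(E^0(\dot U^0)^{1/2}+(\mu\epsilon^2)\Norm{\check\zeta}_{L^\infty([0,T];(\X^0)^2)}\big)e^{\lambda T} + {\bf C}_0\int_0^T f(t')e^{\lambda(T-t')}\dd t' \]
with ${\bf C}_0,\lambda$ independent of $\nu$. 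To obtain uniform control of higher derivatives, I would differentiate \eqref{linearized-mollified} in space (time derivatives being handled by the same device, or simply reading $\partial_t\dot\zeta$, $\partial_t\dot w$ off the equations), noting that $\mathsf{J}_\nu$ commutes with $\partial_x^k$, so that $\partial_x^k\dot U_\nu$ solves a system of the same form \eqref{linearized-mollified} with the original unknowns replaced by $\partial_x^k\dot U_\nu$ and with a forcing obtained by commuting $\partial_x^k$ through $\mathfrak{a},\mathfrak{b},\mathfrak{c},\check{\mathfrak{a}}_\alpha$; these commutators are estimated exactly as in the proof of Lemma~\ref{L.linearize} by the product and Schauder estimates of Section~\ref{S.product} in terms of norms of $\zeta,w$ and of lower-order derivatives of $\dot U_\nu$, uniformly in $\nu$. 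An induction on $k$ then yields, for every $s\geq 0$,
\[ \sup_{\nu\in(0,1]}\ \sup_{t\in[0,T]}\ \norm{\dot U_\nu(t)}_{H^s_x\times H^s_x} \ \leq \ C_s \ < \ \infty, \]
and, using the equations, a uniform bound on $\partial_t\dot U_\nu$ in $H^{s-1}_x$ as well.

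Finally I would pass to the limit $\nu\to 0$. By the uniform bounds, $(\dot U_\nu)_\nu$ is bounded in $L^\infty([0,T];H^s_x)$ with $\partial_t\dot U_\nu$ bounded in $L^\infty([0,T];H^{s-1}_x)$ for every $s$; by Banach–Alaoglu and Aubin–Lions (or simply Arzel\`a–Ascoli on $[0,T]$ with values in $H^{s}_{x,\mathrm{loc}}$), a subsequence converges, strongly in $C^0([0,T];H^{s'}_{x,\mathrm{loc}})$ for $s'<s$ and weakly-$*$ in the relevant spaces, to some $\dot U\in H^\infty([0,T]\times\RR)^2$. Since $\mathsf{J}_\nu\to\mathrm{Id}$ strongly on every $H^s_x$ as $\nu\to0$ and the coefficient operators $\mathfrak{a},\mathfrak{b},\mathfrak{c},\check{\mathfrak{a}}_\alpha$ are continuous between the relevant spaces, one may pass to the limit in each term of \eqref{linearized-mollified} to conclude that $\dot U$ solves \eqref{linearized} with $\dot U\id{t=0}=\dot U^0$; the regularity $H^\infty([0,T]\times\RR)^2$ follows from the uniform higher-order bounds, and the equations then give continuity in time. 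Uniqueness is immediate from Lemma~\ref{L.energy-estimate} applied (with $\nu=0$, $\check\zeta=0$) to the difference of two solutions with the same data and forcing, which has vanishing energy at $t=0$ and hence vanishes on $[0,T]$. The main obstacle is not the fixed-$\nu$ solvability, which is soft, but the bookkeeping required to make the higher-order energy estimates genuinely uniform in $\nu$: one must check that differentiating the mollified system in space produces only commutator terms of the type already controlled in Lemma~\ref{L.linearize}, with constants that do not degenerate as $\nu\to0$, which is exactly why the mollification was inserted as $\mathsf{J}_\nu^2\partial_x$ and $\mathsf{J}_\nu\mathfrak{c}\mathsf{J}_\nu\partial_x$ rather than in some less symmetric fashion.
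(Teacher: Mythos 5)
Your proposal is correct and follows essentially the same route as the paper: solve the mollified system~\eqref{linearized-mollified} for fixed $\nu>0$ as a Banach-space ODE after inverting $\mathfrak{b}$ (Lemma~\ref{L.invertible}), obtain $\nu$-uniform bounds at all orders from Lemma~\ref{L.energy-estimate} applied to the differentiated system (commutators handled as in Lemma~\ref{L.linearize}), pass to the limit $\nu\to0$, and deduce uniqueness from the energy estimate at $\nu=0$. The only (harmless) deviation is the limit step: the paper avoids compactness by applying Lemma~\ref{L.energy-estimate} to the difference $\dot U_\nu-\dot U_{\nu'}$, whose source terms involve ${\sf J}_\nu-{\sf J}_{\nu'}$ and are small thanks to the uniform higher-order bounds, so that $(\dot U_\nu)$ is Cauchy in $C^0([0,T];\X^0\times\Y^0)$, whereas you extract a convergent subsequence via Aubin--Lions/Arzel\`a--Ascoli; both arguments are legitimate here since the system is linear and uniqueness holds.
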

\begin{Remark}
One could assume only continuity in time and finite (but large enough) regularity in space on $\zeta,w,\check\zeta,r$.
\end{Remark}
\begin{proof}
We first consider the regularized system introduced in~\eqref{linearized-mollified} and that we 
rewrite (recall that, by Lemma~\ref{L.invertible}, $\mathfrak{b}^{-1}:(\Y^0)^\star\to\Y^0$ is well-defined and continuous) as
\begin{equation}\label{linearized-mollified-bnu}
\left\{ \begin{array}{l}
\displaystyle\partial_{ t}\dot{\zeta}_\nu \ + \ {\sf J}_\nu^2 \partial_x \dot w_\nu \ =\ r^1, \\ \\ 
\partial_{ t} \dot w_\nu \ + \ \mathfrak{b}^{-1}{\sf J}_\nu^2\partial_x \mathfrak{a} \dot{\zeta} \ + \ \mathfrak{b}^{-1}{\sf J}_\nu^2\partial_x \check{\mathfrak{a}}_{\alpha} \check{\zeta} \ + \ \mathfrak{b}^{-1}{\sf J}_\nu \mathfrak{c} {\sf J}_\nu \partial_x \dot w_\nu\ = \ \mathfrak{b}^{-1} r^2.
\end{array}
\right.
\end{equation}
Since ${\sf J}_\nu\eqdef (1-\nu\partial_x^2)^{-1/2}$  is of order $-1$,~\eqref{linearized-mollified-bnu} is a system of ordinary differential equations on $\X^0\times \Y^0$, which is solved uniquely by Cauchy-Lipschitz theorem. More precisely, for any $\nu>0$ and $r=(r^1,r^2)\in C^0([0,T];\X^0\times(\Y^0)^\star)$, $\check{\zeta}\in C^0([0,T];(\X^1)^2)$ and $\dot U^0\in \X^0\times \Y^0$, there exists a unique $\dot U_\nu\eqdef (\dot{\zeta}_\nu,\dot{w}_\nu)^\top\in C^1([0,T];\X^0\times \Y^0)$, solution to~\eqref{linearized-mollified-bnu} with initial data $\dot U\id{t=0}=(\dot\zeta^0,\dot w^0)^\top$. 

Differentiating $N$ times~\eqref{linearized-mollified} and proceeding as in the proof of Lemma~\ref{L.linearize}, one can check that $\partial_x^N \dot U_\nu$ satisfies~\eqref{linearized-mollified} with obvious modifications to $r^1,r^2$ and $\check{\zeta}$. Thus, by the above argument, $\partial_x^N \dot U_\nu\in C^1([0,T];\X^0\times \Y^0)$, and it follows (since $N$ may be chosen arbitrarily large) that $\dot U_\nu\in C^1([0,T];H^\infty_x(\RR))$. In particular, $\partial_t U_\nu\id{t=0}\in H^\infty_x$. 

Applying the above argument to $\partial_t U_\nu$ after differentiating~\eqref{linearized-mollified} with respect to time, one deduces $\partial_t \dot U_\nu\in C^1([0,T];H^\infty_x(\RR))$, and by induction $U_\nu\in H^\infty([0,T]\times\RR)$.

Applying the estimate of Lemma~\ref{L.energy-estimate} to $\partial_x^N U_\nu$ with $N\in\NN$ given, one has
\[ E^0(\partial_x^N \dot U_\nu)\ \leq \ M, \]
with $M= C(\m,h_0^{-1},k_0^{-1},K, T, E^0(\partial_x^N \dot U^0) , \Norm{(\zeta,w,\check\zeta,r)}_{H^\infty([0,T]\times\RR)^6} \big) $, uniform with respect to ${\nu>0}$. 

Let us now consider $V_{\nu,\nu'}=\dot U_\nu-\dot U_{\nu'}$. $V_{\nu,\nu'}$ satisfies~\eqref{linearized-mollified} with $\check\zeta=0,V_{\nu,\nu'}\id{t=0}=0$ and
\[ r_{\nu,\nu'}^1=({\sf J}_\nu^2-{\sf J}_{\nu'}^2) \partial_x\dot w_{\nu'},\quad r_{\nu,\nu'}^2=({\sf J}_\nu^2-{\sf J}_{\nu'}^2)\partial_x \mathfrak{a} \dot{\zeta}_{\nu'} \ + \ ({\sf J}_\nu^2-{\sf J}_{\nu'}^2)\partial_x \check{\mathfrak{a}}_{\alpha} \check{\zeta} \ + \ ({\sf J}_\nu \mathfrak{c} {\sf J}_\nu-{\sf J}_{\nu'} \mathfrak{c} {\sf J}_{\nu'}) \partial_x \dot w_{\nu'}.\]
Since for any $s\in\RR$, $\Norm{{\sf J}_{\nu}}_{H^s_x\to H^s_x}=1$ and $\Norm{{\sf J}_\nu-{\sf J}_{\nu'}}_{H^s_x\to H^s_x} \to 0\ (\nu\to\nu')$ and thanks to the above energy estimates, one has $\norm{r_{\nu,\nu'}}_{\X^0\times(\Y^0)^\star}\to 0 (\nu\to\nu')$.
By Lemma~\ref{L.energy-estimate}, one deduces that $\dot U_\nu$ is a Cauchy sequence of $C^0([0,T];\X^0\times \Y^0)$. Therefore there exists a limit that we denote $\dot U\in C^0([0,T];\X^0\times \Y^0)$, which satisfies the non-regularized (\ie $\nu=0$) system, namely~\eqref{linearized}.

The above energy estimates on $\partial_x^N \dot U_\nu$ being uniform with respect to $\nu$, one has $\dot U\in L^\infty([0,T]; H^\infty_x)$. By~\eqref{linearized}, we deduce $\partial_t \dot U\in L^\infty([0,T]; H^\infty_x)$, and by induction $\dot U\in H^\infty([0,T]\times \RR)$.

Uniqueness of the solution follows when applying the energy estimate of Lemma~\ref{L.energy-estimate} to the difference between two solutions.
\end{proof}

We can now conclude with the proof of our main result, Theorem~\ref{T.WP}.
\begin{proof}[Proof of Theorem~\ref{T.WP}] 
Let us define Friedrichs mollifiers, ${\sf j_{\varkappa}}={\bf 1}(|D|\leq \varkappa)$ and
\[{{\bf U}_n}\id{t=0}={\bf U}_n^0\eqdef\{(\partial^\alpha {\sf j_{2^{n}}} \zeta^0, \partial^\alpha {\sf j_{2^{n}}}w^0)\}_{|\alpha|\leq N}.\] For each $n\geq1$, we define, thanks to Lemma~\ref{L.WP-linear}, ${\bf U}_n\eqdef \{(\zeta^{(\alpha)}_n, w^{(\alpha)}_n)\}_{|\alpha|\leq N}$ as the unique solution to ${{\bf U}_n}\id{t=0}={\bf U}_n^0$ as well as~\eqref{linearized}, where (using the notations and definitions of Lemma~\ref{L.linearize}) $\mathfrak{a}=\mathfrak{a}[\epsilon\zeta_{n-1},\epsilon w_{n-1}]$ and similarly for $\mathfrak{b},\mathfrak{c},r=r^{(\alpha)}$; $\check{\mathfrak{a}}\check\zeta=0$ if $|\alpha|\leq N-1$ and $\check{\mathfrak{a}}\check\zeta=\check{\mathfrak{a}}_\alpha\check \zeta_{n-1}^{\langle\check\alpha\rangle}$ otherwise. Our iteration scheme is initialized with smooth and time-constant ${\bf U}_0={\bf U}_0^0$.

Lemma~\ref{L.WP-linear} defines at each step ${\bf U}_n\in C([0,T_n];H^\infty_x)$, where
\begin{multline*} T_n(h_0',k_0',K',M')\eqdef \max\Big\{T\geq 0,\ \text{ such that } \quad E^N(\zeta_n,w_n)^{1/2}\leq M' E^N(U^0)^{1/2}\\
\text{ and } (\zeta_{n},w_{n})\text{ satisfies~\eqref{C.depth},\eqref{C.hyp0},\eqref{C.hyperbolicity} with $h_0',k_0',K'$} \Big\}.
\end{multline*}
One has $T_n>0$ as soon as $h_0'<h_0$, $k_0'<k_0$, $K'>K$ and $M'>1$, by standard continuity arguments. Let us prove that $T_n$ can be bounded from below, uniformly with respect to $n\in\NN$.

By Lemma~\ref{L.energy-estimate}, we have the energy estimate for $U^{(\alpha)}_n\eqdef (\zeta^{(\alpha)}_n,w^{(\alpha)}_n)^\top$ with any $|\alpha|\leq N$:
\[ E^0(U^{(\alpha)}_n)^{1/2} \ \leq \ {\bf C}_0 \big( E^0(U^{(\alpha)}_n\id{t=0})^{1/2} +\mu\epsilon^2 M' E^N(U^0)^{1/2}\big)e^{\lambda t} \ + \ {\bf C}_0\int_0^t f(t')e^{\lambda(t-t')}\dd t', \]
for any $t\in [0, T_{n-1}(h_0',k_0',K',M')]$ and with 
\[ \lambda \ = \ {\bf C}_0 \times \Big(\epsilon+\Upsilon_\F \Norm{w_{n-1}}_{L^\infty([0,T);\Z^2)}^2 \Big) , \qquad f(t)= \norm{r^{(\alpha)}}_{(\Y^0)^\star}+\mu\epsilon^2 M' E^N(U^0)^{1/2},\]
and where ${\bf C}_0=C(\m,(h_0')^{-1},(k_0')^{-1},K',M')$.

Notice that $\partial^\alpha U_n\neq U^{(\alpha)}_n$ but one can check (differentiating the equations satisfied by $U_n$) that $\partial^\alpha U_n$ satisfies~\eqref{linearized} with a remainder term $\t r^{(\alpha)}[\epsilon\zeta_n,\epsilon w_n,\epsilon\zeta_{n-1},\epsilon w_{n-1}]$ which is estimated identically as in Lemma~\ref{L.linearize}. This yields, for any $t\in [0,\min\{T_{n-1},T_n\}]$,
\[ E^N(U_n)^{1/2}\ \leq \ {\bf C}_0 e^{\lambda t} E^N(U^0)^{1/2} \Big( 1 + {\bf C}_0' M' t \times \big(\epsilon+\Upsilon_\F^{1/2} \norm{w_{n-1}}_{\Z^1}+\Upsilon_\F \norm{w_{n-1}}_{\Z^1}^2\big)\Big), \]
with $\lambda, {\bf C}_0$ as above, and ${\bf C}_0'=C(\m,(h_0')^{-1},M',E^N(U^0))$.

We deduce that there exists $M^\star,\frac1{T^\star}=C(\m,h_0^{-1},k_0^{-1},K,E^N(U^0))$, independent of $n$, such that \[ T_n(h_0/2,k_0/2,2K,M^\star)\geq T^\star/\lambda', \quad \lambda'\eqdef \epsilon+\Upsilon_\F^{1/2} \norm{w^0}_{\Z^1}+\Upsilon_\F \norm{w^0}_{\Z^2}^2,\]
and that for any $t\in [0,T^\star/\lambda']$, one has
\begin{equation}\label{energy-n}
E^N(U_n)^{1/2}\ \leq \ M^\star \ E^N(U^0)^{1/2} .
\end{equation}

Let us now consider $V_n=U_n-U_{n-1}$. Notice first that 
\[ E^j(V_n\id{t=0})=E^j((U_n-U_{n-1})\id{t=0})\lesssim 2^{-2n(N-j)} E^N(U^0).\]
One can control $E^0(V_n)$ from Lemma~\ref{L.energy-estimate-diff}, using the above, the estimate on $r^{(\alpha)}_n-r^{(\alpha)}_{n-1}$ given by Lemma~\ref{L.linearize} as well as the energy estimate~\eqref{energy-n}. Similar estimates on $\partial^\alpha V^n$ for $0\leq |\alpha|\leq 2$ yield
\[ E^2(V_n)^{1/2} \leq {\bf C}_0 2^{-n(N-2)} e^{\lambda' t} + {\bf C}_0 \lambda'\int_0^t E^2(V_{n-1})^{1/2} e^{\lambda' (t-t')}\dd t',\]
with ${\bf C}_0,\lambda'$ as above. Therefore, restricting $T^\sharp\leq T^\star$ if necessary, the sequence $ U_n=U^0+\sum_{j=1}^n V_j$ converges in $C^0([0,T^\sharp/\lambda'];\X^2\times\Y^2)$.

Using that $U_n$ is uniformly bounded in $C^0([0,T^\sharp/\lambda'];\X^N\times\Y^N)$, the logarithmic convexity of Sobolev norms yields that $U_n $ converges strongly in $C^0([0,T^\sharp/\lambda'];\X^{N-1}\times\Y^{N-1})$. The limit $U=\lim_{n\to\infty}U_n$ belongs to $L^{\infty}([0,T^\sharp/\lambda'];\X^N\times\Y^N)\cap C^0([0,T^\sharp/\lambda'];\X^{N-1}\times\Y^{N-1})$ and then by classical argument belong to $ C^0_{\text{w}}([0,T^\sharp/\lambda'];\X^{N}\times\Y^{N}).$ It is now straightforward to check that $U$ satisfies system~\eqref{linearized}, and therefore (by Lemma~\ref{L.linearize})~\eqref{GN-w}. 

By passing to the limit the energy estimate~\eqref{energy-n}, one deduces the energy estimate of the statement. The uniqueness of the solution is a consequence of Lemma~\ref{L.energy-estimate-diff}, applied to the difference between two solutions (see also Proposition~\ref{P.stability}). Theorem~\ref{T.WP} is proved.
\end{proof}

\def\cprime{$'$}

\end{document}